\newtheorem{thm}{Theorem}[section]
\newtheorem{lem}[thm]{Lemma}
\newtheorem{cor}[thm]{Corollary} 
\newtheorem*{claim}{Claim}
\newtheorem*{fact1}{Fact 1}
\newtheorem*{fact2}{Fact 2}
\newtheorem*{fact3}{Fact 3}
\theoremstyle{remark}
\newtheorem{rem}{Remark}[section]
\theoremstyle{definition}
\newtheorem{defn}[thm]{Definition}
\newtheorem{coro}{Corollary}[section]
\theoremstyle{definition}
\newtheorem*{nota}{Notation}
\theoremstyle{remark}
\newcommand{\fin}{\mathrm{fin}}
\newcommand{\om}{\omega}
\newcommand{\sse}{\subseteq}
\newcommand{\contains}{\supseteq}
\DeclareMathOperator{\ran}{ran}
\DeclareMathOperator{\dom}{dom}
\newcommand{\bN}{\mathbb{N}}
\DeclareMathOperator{\depth}{depth}
\DeclareMathOperator{\supp}{supp}
\DeclareMathOperator{\FIN}{FIN}
\newcommand{\ra}{\rightarrow}
\newcommand{\lgl}{\langle}
\newcommand{\rgl}{\rangle}
\newcommand{\noprint}[1]{\relax}
\title{Topological Ramsey spaces and metrically Baire sets}
\author{Natasha Dobrinen and Jos\'e G. Mijares}
\address{Department of Mathematics, University of Denver, 2280 S Vine St., Denver, CO 80208, USA}
\email{natasha.dobrinen@du.edu\\ URL: http://web.cs.du.edu/$\sim$ndobrine}
\thanks{Dobrinen  was partially supported by  National Science Foundation Grant DMS-1301665}
\address{Department of Mathematics, University of Denver. 2280  S Vine St., Denver, CO 80208, USA}
\email{Jose.MijaresPalacios@du.edu}
\begin{document}

\maketitle

\begin{abstract}
We characterize a class of topological Ramsey spaces
such that each element $\mathcal R$ of the class induces a collection $\{\mathcal R_k\}_{k<\omega}$ of projected spaces which have the property that every Baire set is Ramsey.
Every projected space $\mathcal R_k$
is a subspace of the  corresponding space of length-$k$ approximation sequences with the Tychonoff, equivalently metric, topology.
 This answers a question of S. Todorcevic and generalizes the results of Carlson \cite{Carlson}, Carlson-Simpson \cite{CarSim2}, Pr\"omel-Voigt \cite{PromVoi},  and Voigt \cite{Voigt}.
We also present a new family of topological Ramsey spaces contained in the aforementioned class which generalize the spaces of ascending parameter words  of  Carlson-Simpson \cite{CarSim2} and  Pr\"omel-Voigt \cite{PromVoi}  and  the spaces $\FIN_m^{[\infty]}$, $0<m<\omega$, of block sequences defined by Todorcevic \cite{Todo}. 
\end{abstract}

\section{Introduction}

There exists a class of topological Ramsey spaces whose members admit, for every $k<\omega$, a set of sequences of $k$-approximations that can be understood as a topological space where every Baire set satisfies the Ramsey property. Such topological spaces inherit the Tychonoff, (equivalently, metric) topology from the space of approximation sequences associated to the elements of the underlying topological Ramsey space. 
We shall say that these  projected spaces have the property that 
every metrically Baire subset is Ramsey. 

The infinite Dual Ramsey Theorem of Carlson and Simpson  in \cite{CarSim2}
 was the first result where this phenomenon was seen. 
In that paper, it was shown that for
 Carlson-Simpson's space of equivalence relations on $\mathbb N$ with infinitely many equivalence classes, 
for each $k<\om$,
the projected space of equivalence relations on $\mathbb N$ with exactly $k$ equivalence classes
has the property that every metrically Baire set is Ramsey.
Other examples where this phenomenon occurs are 
 Pr\"omel-Voigt's spaces of parameter words, ascending parameter words and partial $G$-partitions (where $G$ is a finite group) \cite{PromVoi}; and Carlson's space of infinite dimensional vector subspaces of $F^{\mathbb N}$ (where $F$ is a finite field) \cite{Carlson}, in connection with an extension of the Graham-Leeb-Rothschild Theorem \cite{gralero} due to Voigt \cite{Voigt}. In this work, answering a question of Todorcevic, we give a characterization of this class of topological Ramsey spaces.

The theory of toplogical Ramsey spaces has experienced increasing developent in recent years. In the book \textit{Introduction to Ramsey spaces} \cite{Todo} by S. Todorcevic, most of the foundational results, examples and applications are presented within the framework of a more general type of Ramsey space (not neccesarily topological). A topological Ramsey space is the main object of the \textit{topological Ramsey theory} (see Section \ref{trs} for the definitions). Carlson and Simpson gave the first abstract exposition of this theory in \cite{CarSim}. The first known example of a topological Ramsey space was given in \cite{Ellen}, building upon earlier results like \cite{NashW,Galvin,GalPri}.  Recent developments in the study of topological Ramsey spaces have been made regarding connections to forcing, the theory of ultrafilters, selectivity, Tukey reducibility, parametrized partition theorems, canonization theorems, topological dynamics, structural Ramsey theory, Fra\"iss\'e classes and random objects, among others (see for instance \cite{DobTod0,DobTod,DobTod2,DMT,Mijares,Mijares2,Mijares3,MijaNie,MijaPad,RaghaTod,Tim1,Tim2}). 

In this paper we study a feature of some topological Ramsey spaces which had not been fully understood in the abstract setting. More precisely, we establish conditions of sufficiency which characterize those topological Ramsey spaces $\mathcal R$ with family of approximations $\mathcal{AR}= \bigcup_{k<\omega}\mathcal{AR}_k$ for which there exist topological spaces $\mathcal R_k\subseteq(\mathcal{AR}_k)^{\mathbb N}$, $k<\omega$, such that every Baire subset of $\mathcal R_k$ is Ramsey. Each $\mathcal R_k$ inherits the Tychonoff topology from $(\mathcal{AR}_k)^{\mathbb N}$, which results when $\mathcal{AR}_k$ is understood as a discrete space.

In Section \ref{trs} we give a brief description of the theory of topological Ramsey spaces. Section \ref{setting} contains the main result of the paper, the characterization announced in the previous paragraph. In Section \ref{general parameter}, 
 we introduce a class of topological Ramsey spaces which generalizes the spaces of ascending parameter words studied by Carlson-Simpson in \cite{CarSim2} and  Pr\"omel-Voigt in \cite{PromVoi} (see subsection \ref{ascending}), and which turns out to be also a generalization of the spaces $\FIN_m^{[\infty]}$, $0<m<\omega$, of block sequences defined by Todorcevic  in \cite{Todo}. We show that each space in this class admits projection spaces where every Baire set is Ramsey, fitting into the abstract setting introduced in Section \ref{setting}. In Section \ref{classical}, we show that the classical examples originally introduced in \cite{CarSim2,PromVoi,Todo,Voigt} fit the abstract setting given in Section \ref{setting}. These classical examples motivated this research. At the end of this article we comment about open questions related to the results in Sections \ref{setting} and \ref{general parameter}.

\section{Topological Ramsey spaces}\label{trs}

The four axioms which guarantee that a space is a topological Ramsey space (see Definition \ref{defn.5.3} below) can be found at the beginning of Chapter 5 of 
\cite{Todo}, which we reproduce in this Section.

Consider a triple
$(\mathcal{R},\le,r)$
of objects with the following properties.
$\mathcal{R}$ is a nonempty set,
$\le$ is a quasi-ordering on $\mathcal{R}$ and $r:\mathcal{R}\times\om\ra\mathcal{AR}$ is a mapping giving us the sequence \linebreak $(r_n(\cdot)=r(\cdot,n))$ of approximation mappings, where
$\mathcal{AR}$ is  the collection of all finite approximations to members of $\mathcal{R}$.  For every $B\in\mathcal R$, let

\begin{equation}
\mathcal R|B = \{A\in\mathcal R : A\leq B\}.
\end{equation}

For $a\in\mathcal{AR}$ and $B\in\mathcal{R}$, let
\begin{equation}
[a,B]=\{A\in\mathcal{R}:A\le B\mathrm{\ and\ }(\exists n)\ r_n(A)=a\}.
\end{equation}

For $a\in\mathcal{AR}$, let $|a|$ denote the length of the sequence $a$, that is, $|a|$ equals the integer $n$ for which $a=r_n(A)$, for some $A\in\mathcal R$.  If $m<n$, $a=r_m(A)$ and $b=r_n(A)$ then we will write $a=r_m(b)$. In particular, $a=r_m(a)$, and this is equivalent to $|a|=m$.  For $a,b\in\mathcal{AR}$, $a\sqsubseteq b$ if and only if $a=r_m(b)$ for some $m\le |b|$. $a\sqsubset b$ if and only if $a=r_m(b)$ for some $m<|b|$.
For each $n<\om$, $\mathcal{AR}_n=\{r_n(A):A\in\mathcal{R}\}$.
\vskip.1in

\begin{enumerate}
\item[\bf A.1]\rm
\begin{enumerate}
\item
$r_0(A)=\emptyset$ for all $A\in\mathcal{R}$.\vskip.05in
\item
$A\ne B$ implies $r_n(A)\ne r_n(B)$ for some $n$.\vskip.05in
\item
$r_n(A)=r_m(B)$ implies $n=m$ and $r_k(A)=r_k(B)$ for all $k<n$.\vskip.1in
\end{enumerate}
\item[\bf A.2]\rm
There is a quasi-ordering $\le_{\mathrm{fin}}$ on $\mathcal{AR}$ such that\vskip.05in
\begin{enumerate}
\item
$\{a\in\mathcal{AR}:a\le_{\mathrm{fin}} b\}$ is finite for all $b\in\mathcal{AR}$,\vskip.05in
\item
$A\le B$ iff $(\forall n)(\exists m)\ r_n(A)\le_{\mathrm{fin}} r_m(B)$,\vskip.05in
\item
$\forall a,b,c\in\mathcal{AR}\ [a\sqsubset b\wedge b\le_{\mathrm{fin}} c\ra\exists d\sqsubset c\ a\le_{\mathrm{fin}} d]$.\vskip.1in
\end{enumerate}
\end{enumerate}

$\depth_B(a)$ is the least $n$, if it exists, such that $a\le_{\mathrm{fin}}r_n(B)$.
If such an $n$ does not exist, then we write $\depth_B(a)=\infty$.
If $\depth_B(a)=n<\infty$, then $[\depth_B(a),B]$ denotes $[r_n(B),B]$.

\begin{enumerate}
\item[\bf A.3] \rm
\begin{enumerate}
\item
If $\depth_B(a)<\infty$ then $[a,A]\ne\emptyset$ for all $A\in[\depth_B(a),B]$.\vskip.05in
\item
$A\le B$ and $[a,A]\ne\emptyset$ imply that there is $A'\in[\depth_B(a),B]$ such that $\emptyset\ne[a,A']\sse[a,A]$.\vskip.1in
\end{enumerate}
\end{enumerate}

If $n>|a|$, then  $r_n[a,A]$ denotes the set $\{r_n(B) : B\in [a,A]\}$. Notice that $a\sqsubset b$ for every $b\in r_n[a,A]$.

\begin{enumerate}
\item[\bf A.4]\rm
If $\depth_B(a)<\infty$ and $\mathcal{O}\sse\mathcal{AR}_{|a|+1}$,
then there is $A\in[\depth_B(a),B]$ such that
$r_{|a|+1}[a,A]\sse\mathcal{O}$ or $r_{|a|+1}[a,A]\sse\mathcal{O}^c$.\vskip.1in
\end{enumerate}

The family $\{[a,B]:a\in\mathcal{AR},B\in\mathcal R\}$ forms a basis for the {\em Ellentuck} topology on $\mathcal{R}$;
it extends the usual metrizable topology on $\mathcal{R}$ when we consider $\mathcal{R}$ as a subspace of the Tychonoff cube $(\mathcal{AR})^{\bN}$.
Given the Ellentuck topology on $\mathcal{R}$,
the notions of nowhere dense, and hence of meager are defined in the usual way.
Thus, we may say that a subset $\mathcal{X}$ of $\mathcal{R}$ has the {\em property of Baire} if and only if $\mathcal{X}=\mathcal{O}\cap\mathcal{M}$ for some Ellentuck open set $\mathcal{O}\sse\mathcal{R}$ and some Ellentuck meager set $\mathcal{M}\sse\mathcal{R}$.

\begin{defn}[\cite{Todo}]\label{defn.5.2}
A subset $\mathcal{X}$ of $\mathcal{R}$ is {\em Ramsey} if for every $\emptyset\ne[a,A]$,
there is a $B\in[a,A]$ such that $[a,B]\sse\mathcal{X}$ or $[a,B]\cap\mathcal{X}=\emptyset$.
$\mathcal{X}\sse\mathcal{R}$ is {\em Ramsey null} if for every $\emptyset\ne [a,A]$, there is a $B\in[a,A]$ such that $[a,B]\cap\mathcal{X}=\emptyset$.
\end{defn}

\begin{defn}[\cite{Todo}]\label{defn.5.3}
A triple $(\mathcal{R},\le,r)$ is a {\em topological Ramsey space} if every subset of $\mathcal{R}$ with the property of Baire is Ramsey and every meager subset of $\mathcal{R}$ is Ramsey null.
\end{defn}

The following is the generalization of Ellentuck's Theorem to the general framework of topological Ramsey spaces.

\begin{thm}[Abstract Ellentuck Theorem -- see Theorem 5.4 in \cite{Todo}]\label{thm.AET}\rm 

\it
If $(\mathcal{R},\le,r)$ is closed (as a subspace of $(\mathcal{AR})^{\bN}$) and satisfies axioms {\bf A.1}, {\bf A.2}, {\bf A.3}, and {\bf A.4},
then the triple $(\mathcal{R},\le,r)$ forms a topological Ramsey space.
\end{thm}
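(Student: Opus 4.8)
\textbf{Proof proposal for Theorem~\ref{thm.AET}.} The plan is to run the classical ``combinatorial forcing plus fusion'' argument --- the one behind Ellentuck's theorem and the Galvin--Prikry theorem --- in this abstract setting: axioms \textbf{A.1}--\textbf{A.3} are used to manipulate the tree of approximations, \textbf{A.4} is the pigeonhole engine, and closedness of $\mathcal R$ as a subspace of $(\mathcal{AR})^{\bN}$ is what lets us take $\le$-limits of $\le$-decreasing sequences. I would first establish the ``metric'' (Galvin--Prikry) version --- every metrically Borel subset of $\mathcal R$ is Ramsey, with the error Ramsey null --- and then bootstrap to the full Ellentuck property of Baire.

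Fix $\mathcal X\sse\mathcal R$. Say $B\in\mathcal R$ \emph{accepts} $a\in\mathcal{AR}$ (relative to $\mathcal X$) if $[a,B]\sse\mathcal X$; $B$ \emph{rejects} $a$ if no $A\in[\depth_B(a),B]$ accepts $a$; and $B$ \emph{decides} $a$ if it does one of these. The core lemmas would be:
\begin{enumerate}
\item[(i)] (Decisiveness.) For all $a$ and all $B$ with $\depth_B(a)<\infty$ there is $A\in[\depth_B(a),B]$ deciding $a$ (immediate from the definitions); acceptance is inherited by all $A'\le A$, and, using \textbf{A.3}(b), so is rejection.
\item[(ii)] (Propagation, via \textbf{A.4}.) If $B$ rejects $a$ then there is $A\in[\depth_B(a),B]$ which rejects every $b\in r_{|a|+1}[a,A]$. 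This comes from applying \textbf{A.4} to the set $\mathcal O$ of one-step extensions $b\in\mathcal{AR}_{|a|+1}$ of $a$ that are accepted by some $A'\le B$: the homogeneous $A$ cannot satisfy $r_{|a|+1}[a,A]\sse\mathcal O$ (this would, via \textbf{A.2}(c) and \textbf{A.3}, contradict that $B$ rejects $a$), so $r_{|a|+1}[a,A]\sse\mathcal O^c$, i.e.\ every one-step extension is rejected.
\item[(iii)] (Fusion.) Iterating (i) and (ii) and enumerating, for each $n$, the finitely many $a$ with $|a|\le n$ of finite depth (finiteness by \textbf{A.2}(a)), build $B=A_0\ge A_1\ge\cdots$ with $r_n(A_{n+1})=r_n(A_n)$ so that $A_n$ rejects all those $a$ relative to $A_n$. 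Since $\mathcal R$ is closed in $(\mathcal{AR})^{\bN}$, $A_\infty:=\lim_n A_n\in\mathcal R$, and $A_\infty$ rejects every $a$ with $\depth_{A_\infty}(a)<\infty$ --- in particular every $a\sqsubset A_\infty$.
\end{enumerate}

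Granting these, the metrically open case is short: given a metrically open $\mathcal X$ and $\emptyset\ne[a,B]$, use (i) to get $A\in[\depth_B(a),B]$ deciding $a$; if $A$ accepts then $[a,A]\sse\mathcal X$, and if $A$ rejects, run the fusion of (iii) below $A$ with $a$ held fixed to get $A_\infty\in[a,A]$ rejecting every finite-depth $b$, whence $[a,A_\infty]\cap\mathcal X=\emptyset$ --- for if $C\in[a,A_\infty]\cap\mathcal X$, metric openness gives $n$ with $\{D\in\mathcal R:r_n(D)=r_n(C)\}\sse\mathcal X$, so $A_\infty$ accepts $r_n(C)$, a contradiction. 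To pass from open to metrically Borel sets I would argue, in the standard way, that the Ramsey sets (tracked together with their Ramsey-null errors) form a $\sigma$-algebra containing the metrically open sets: closure under complements is immediate from the symmetric form of the definition, and closure under countable unions is a diagonal fusion carried out inside each $[a,B]$; this simultaneously yields that every metrically meager set is Ramsey null.

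Finally, I would upgrade this from the metric to the Ellentuck topology to obtain the stated conclusion --- every set with the Ellentuck property of Baire is Ramsey, every Ellentuck-meager set Ramsey null --- by re-running the accept/reject/fusion machinery with ``$B$ accepts $a$'' now reading ``$[a,B]$ is contained in the given Ellentuck-open set'', using that an Ellentuck-somewhere-dense set contains, after passing to a subset, a basic neighbourhood $[a',A]$. The main obstacles I anticipate are getting Lemma~(ii) exactly right --- verifying that \textbf{A.4} is applied to the correct ``acceptance'' set and that \textbf{A.2}(c) together with \textbf{A.3} really force rejection to descend to one-step extensions --- and the bookkeeping in the fusion (iii), so that a single limit $A_\infty$ handles all finitely many admissible $a$ of each length at once; this is precisely where \textbf{A.1}'s coherence of the approximation maps and the closedness of $\mathcal R$ are indispensable. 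The conceptual jump from metrically Borel to the full Ellentuck property of Baire is the subtlest point, since it requires translating Ellentuck-meagerness into the combinatorial notion of rejection.
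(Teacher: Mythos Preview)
The paper does not give its own proof of this theorem; it is stated and attributed to Todorcevic's book (Theorem~5.4 in \cite{Todo}) without argument, so there is no in-paper proof to compare your proposal against.

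That said, your outline is essentially the standard combinatorial-forcing-plus-fusion proof as it appears in \cite{Todo}, with one structural difference worth flagging. You propose a two-stage route: first prove the metric (Galvin--Prikry) statement that metrically Borel sets are Ramsey, then ``upgrade'' to the Ellentuck property of Baire. The textbook argument instead works directly in the Ellentuck topology from the outset: one shows that for the Ellentuck topology the notions Ramsey null, nowhere dense, and meager all coincide, and deduces that Baire-measurable sets are Ramsey from the decomposition $\mathcal X=\mathcal O\,\triangle\,\mathcal M$ with $\mathcal O$ Ellentuck-open and $\mathcal M$ Ellentuck-meager. Your final ``upgrade'' step --- rerunning accept/reject/fusion with acceptance redefined relative to an Ellentuck-open set --- is exactly that direct argument, so the preliminary metric stage and the $\sigma$-algebra bootstrap for Borel sets are a detour that does no real work toward the stated conclusion. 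Nothing in your sketch is wrong, but if you carry it out you will find that the second stage stands on its own and the first can be dropped.
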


\section{Main results}\label{setting}

In this section we will characterize those topological Ramsey spaces $\mathcal R$
which, for each $k<\om$, induce  a 
 topological space $\mathcal R_k\subseteq(\mathcal{AR}_k)^{\mathbb N}$ which, 
with the subspace topology inherited from the 
Tychonoff topology on $(\mathcal{AR}_k)^{\mathbb N}$,
has the property that 
 every Baire subset of $\mathcal R_k$ is Ramsey. 
In our context, each $\mathcal{AR}_k$ is understood as a discrete space.
Our characterization involves augmenting the structure $(\mathcal{R}, \le,r)$ of a typical topological Ramsey space with symbols for the projected spaces $(\mathcal{R}_k)_{k<\om}$, an extra operation symbol $\circ$, and a new finitization function symbol $s$.

 Consider a structure $(\mathcal R, \leq, r, (\mathcal R_k)_{k<\omega}, \circ, s)$.
Let $\mathcal R$,  $\leq$ and $r$ be as  in the previous section. For every $k<\omega$, $\mathcal R_k$ is a nonempty set. 
Every $\mathcal R_k$ will be understood as a projection of $\mathcal R$ to $(\mathcal{AR}_k)^{\mathbb N}$, in a sense that will be made clear  (see Axiom {\bf A.6} and Remark \ref{projection spaces} below).  The symbol $\circ$ denotes an operation $\circ : \mathcal R\times (\mathcal R\cup\bigcup_{k<\omega}\mathcal R_k) \rightarrow (\mathcal R\cup\bigcup_{k<\omega}\mathcal R_k)$. The symbol $s$ denotes a function $s : \mathbb N\times
\bigcup_{k<\omega}\mathcal{R}_k \rightarrow \bigcup_{k<\omega}\mathcal{AR}_k$. We now introduce axioms {\bf A.5}--{\bf A.7} which we will prove in Theorem \ref{main thm} suffice to obtain the characterization announced at the beginning of this paragraph.

\medskip

\subparagraph*{\bf A.5} (Rules for the operation $\circ$).
\begin{itemize}
\item[{ (a)}] For all $A,B\in\mathcal R$, $A\circ B\in\mathcal R$.
\item[{ (b)}] For all $(A,X)\in \mathcal R\times\mathcal R_k$, $A\circ X\in \mathcal R_k$.
\item[{ (c)}] For all $A,B,C\in\mathcal R$, $A\circ (B\circ C) = (A\circ B)\circ C$.
\item[{ (d)}] For all $A,B\in\mathcal R$ and $X\in\mathcal R_k$, $A\circ (B\circ X) = (A\circ B)\circ X$.
\item[{ (e)}] For every $A,B\in\mathcal R$, if there exists $C\in\mathcal R$ such that $B= A\circ C$ then $B\leq A$.
\end{itemize}

\begin{nota}
For every $A\in\mathcal R$ and every $k<\omega$, let
\begin{equation}
\mathcal R_k|A = \{A\circ X : X\in\mathcal R_k\}.
\end{equation}
\end{nota}

\begin{defn}\label{ramsey sets}
A set $\mathcal X\subseteq\mathcal R_k$ is \textbf{Ramsey} if for every $B\in\mathcal R$ there exists $A\in\mathcal R$ with $A\leq B$ such that $\mathcal{R}_k|A\subseteq\mathcal X$ or $\mathcal{R}_k|A\cap\mathcal X=\emptyset$.
\end{defn}

\begin{rem}
Technically, if $k=0$, then $\mathcal{R}_k$ is a singleton, so every subset is Ramsey.
\end{rem}

\subparagraph*{\bf A.6}    (Rules for the function $s$)  For each $k\in\om$, the following hold:

\begin{itemize}
\item[{ (a)}] 
Let  $X\in\mathcal R_k$ be given. 
If $n \ge k$ then $s(n,X)\in\mathcal{AR}_k$.
If $n< k$ then $s(n,X) = r_n(s(k,X))$. For $k>0$, if $a=s(n,X)$ for some $n\geq k$, then we will assume that $r_{k-1}(a)=s(k-1,X)$.
\item[{ (b)}] 
 For all $X\in\mathcal R_k$, $A\in\mathcal R$, the following hold: For all $n\geq k$, we have $s(n,A\circ X)\in\mathcal{AR}_k|A$ and $\depth_A s(n,A\circ X) < \depth_A s(n+1,A\circ X)$.
\item[{ (c)}]
For $X,Y\in \mathcal R_k$,
$s(n,X) = s(m,Y)$ implies $n=m$ and $\forall j<n$, $s(j,X) = s(j,Y)$.
\item[{ (d)}] 
For $X,Y\in \mathcal R_k$, $X\neq Y$ if and only if $\exists n$, $s(n,X)\neq s(n,Y)$.
\end{itemize}

\medskip

By parts (c) and (d) of \bf A.6\rm, 
 each $X\in\mathcal{R}_k$ may be uniquely identified with its sequence $(s(n,X))_{n<\om}$ of $s$-approximations.
By part (a) of \bf A.6\rm, 
the sequence $(s(n,X))_{n\ge k}$  is an element of the infinite product $(\mathcal{AR}_k)^{\mathbb{N}}$.
Moreover, since $s(k,X)$ determines $s(n,X)$ for all $n<k$,
this sequence is uniquely identified with $X$.
Thus,
the set $\mathcal{R}_k$ can be identified with a subset of $(\mathcal{AR}_k)^{\mathbb{N}}$, 
inheriting the subspace topology  from the Tychonoff topology on $(\mathcal{AR}_k)^{\mathbb{N}}$.

\begin{nota}
For $a\in\mathcal{AR}_k$,
let $\langle a \rangle$ denote the set  $\{X\in\mathcal R_k : (\exists n)\, s(n,X)=a\}$.
\end{nota}

The next three facts follow immediately from {\bf A.5} and {\bf A.6}:

\begin{fact1}\label{fact.1} The family of $\langle a \rangle$, $a\in\mathcal{AR}_k$, is a base for the Tychonoff topology on $\mathcal R_k$. 
\end{fact1}

\begin{fact2}\label{fact.2}
 For every $A\in\mathcal R$, $\mathcal R_k|A\subset \bigcup\{\langle a \rangle : a\in \mathcal{AR}_k|A,\ \depth_A(a)>k\}$.
\end{fact2}

\begin{fact3}\label{fact.3}
 For every $A,B\in\mathcal R$, $\{(A\circ B)\circ X : X\in\mathcal R_k \}=\{A\circ (B\circ X) : X\in\mathcal R_k \}\subseteq \mathcal R_k|A.$
\end{fact3}

\begin{nota}
For $m\leq n$ and $A\in\mathcal R$, let $\mathcal{AR}\binom{n}{m}|A=\{a\in\mathcal{AR}_m|A : \depth_A(a)=n \}$,
and let $\mathcal{AR}{n \choose \ge m}|A$ denote $\bigcup_{j\ge m}\mathcal{AR}{n\choose j}|A$.
Also, for any $k<\omega$ and $a,b\in\mathcal{AR}_k$, write $a<b$ if there exists $X\in\mathcal R_k$ and $m<n\in\omega$ such that $a=s(m,X)$ and $b=s(n,X)$. Write, $a\leq b$ if $a<b$ or $a=b$.
\end{nota}

\subparagraph*{\bf A.7}  (Finitization of the operation $\circ$).

Given $A\in\mathcal R$ and $k\leq m\leq n$, the operation $\circ$ can be finitized to a function from $\mathcal{AR}\binom{n}{m}|A\times\mathcal{AR}\binom{m}{k}|A$ onto $\mathcal{AR}\binom{n}{k}|A$, 
 satisfying the following:

\begin{itemize}
\item[{(a)}] Given $a\in \mathcal{AR}\binom{n}{k}|A$ and $b\in \mathcal{AR}_n|A$, if \ $b \circ a < c$ for some $c\in\mathcal{AR}_k|A$ then there exists $b'\in \mathcal{AR}_n|A$ such that $b < b'$ and $c=b'\circ a$.
\item[{(b)}] Given $a\in \mathcal{AR}\binom{n}{k}|A$ and $b,c\in \mathcal{AR}_n|A$, if \ $b < c$ then $b\circ a < c\circ a$.
\item[{(c)}] Let $A\in\mathcal R$, $a\in \mathcal{AR}\binom{n}{k}|A$, and $X\in \mathcal{R}_k$ with $X\in \langle a \rangle$ be given. 
If $n>k$, then $s(n,A\circ X)=r_n(A)\circ a$. 
If $n=k$, then $s(k,A\circ X)=a$.
\end{itemize}

\begin{rem} \label{projection spaces}
{\bf A.6} allows us to identify each $X\in\mathcal R_k$, $k<\omega$, with the sequence $(s(n,X))_{n\geq k}$, and in this way each $\mathcal R_k$ can be regarded as a subspace of $(\mathcal{AR}_k)^{\mathbb N}$ with the Tychonoff topology obtained by endowing $\mathcal{AR}_k$ with the discrete topology. Part (b) of {\bf A.6} indicates that for fixed $k$ and $X\in\mathcal R_k$, the operation $\circ$ and the function $s$ induce a projection map $\pi(A)=A\circ X$, from $\mathcal R$ to $\mathcal{R}_k$. On the other hand, it is worth mentioning at this point that the space $(\mathcal{AR}_k)^{\mathbb N}$ is a Polish metric space and therefore satisfies the Baire Category Theorem stating that the intersection of countably many open dense sets is dense. We shall say that 
$\mathcal{R}_k$ is {\em  metrically closed in} $(\mathcal{AR}_k)^{\mathbb N}$ if for each sequence $(a_n)_{n\ge k}$ in
$(\mathcal{AR}_k)^{\mathbb N}$
satisfying  that $a_m< a_n$, whenever $n>m\ge k$, then $\bigcap_{k\leq n<\om}\langle a_n \rangle= \{X\}$, for some $X\in\mathcal{R}_k$. 
The limit of the sequence $(a_n)_{n\ge k}$, denoted $\lim_{n\ge k}a_n$. 
If $\mathcal{R}_k$ is a closed in  $(\mathcal{AR}_k)^{\mathbb N}$,
then the subspace topology on $\mathcal{R}_k$ inherited from $(\mathcal{AR}_k)^{\mathbb N}$  is
 completely metrizable; and hence, $\mathcal{R}_k$ satisfies the Baire Category Theorem. Notice that if $\mathcal{R}_k$ is closed then for every $A\in\mathcal{R}$, $\mathcal{R}_k|A$ is also closed and satisfies the Baire Category Theorem..
\end{rem}

The following will be used in the sequel.

\begin{lem}\label{end-extension}
$a\leq b$ if and only if $\lgl a\rgl\contains\lgl b\rgl$.
\end{lem}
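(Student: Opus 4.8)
The plan is to prove the two implications separately, the forward one being a short coherence argument and the converse carrying the real content. For $(\Rightarrow)$ I would assume $a\le b$; the case $a=b$ is trivial, so fix a witness to $a<b$, that is, some $Y\in\mathcal R_k$ and $m<n$ with $a=s(m,Y)$ and $b=s(n,Y)$. Given an arbitrary $X\in\lgl b\rgl$, choose $p$ with $s(p,X)=b$. Then $s(p,X)=b=s(n,Y)$, so part (c) of {\bf A.6} forces $p=n$ and $s(j,X)=s(j,Y)$ for every $j<n$; in particular $s(m,X)=s(m,Y)=a$, whence $X\in\lgl a\rgl$. Thus $\lgl b\rgl\sse\lgl a\rgl$.

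For $(\Lla)$ the first observation I would record is that {\bf A.6}(c) makes the level of an approximation rigid: if $c\in\mathcal{AR}_k$ equals $s(n,X)$ for some $X\in\mathcal R_k$, the index $n$ is independent of $X$, so write $\ell(c)$ for it (and note $\lgl c\rgl\ne\emptyset$ for every $c\in\mathcal{AR}_k$, as each length-$k$ approximation is realized by a member of $\mathcal R_k$). Consequently, if $X\in\lgl c\rgl\cap\lgl d\rgl$ then $c=s(\ell(c),X)$ and $d=s(\ell(d),X)$, and from the definition of $<$ on $\mathcal{AR}_k$ one reads off a trichotomy: $\ell(c)<\ell(d)$ gives $c<d$, $\ell(d)<\ell(c)$ gives $d<c$, and $\ell(c)=\ell(d)$ gives $c=d$ (this last by {\bf A.6}(c) with both arguments equal to $X$). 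Now assume $\lgl a\rgl\contains\lgl b\rgl$ and fix $X\in\lgl b\rgl\sse\lgl a\rgl$, so $X\in\lgl a\rgl\cap\lgl b\rgl$ and the trichotomy applies. If $\ell(a)\le\ell(b)$ then $a\le b$ and we are done, so the whole point is to exclude $\ell(b)<\ell(a)$. In that case $b<a$, hence $\lgl a\rgl\sse\lgl b\rgl$ by $(\Rightarrow)$ and therefore $\lgl a\rgl=\lgl b\rgl$; to contradict this it suffices to produce $Y\in\lgl b\rgl\setminus\lgl a\rgl$, i.e.\ a $Y\in\mathcal R_k$ with $s(\ell(b),Y)=b$ but $s(\ell(a),Y)\ne a$ (the latter forces $Y\notin\lgl a\rgl$ since $\ell(a)$ is rigid).

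The construction of such a branching $Y$ is what I expect to be the main obstacle. The idea would be to realize $b$ through the operation — fix $A\in\mathcal R$ and $a_0\in\mathcal{AR}\binom{\ell(b)}{k}|A$ so that $b=r_{\ell(b)}(A)\circ a_0$, which is the content of {\bf A.7}(c) — and then exploit the extension property {\bf A.7}(a)--(b) together with the fact that, in the ambient topological Ramsey space, $r_{\ell(b)}(A)$ admits more than one $\le_{\fin}$-successor in $\mathcal{AR}$ (non-triviality of $\mathcal R$, ultimately a consequence of the pigeonhole axiom {\bf A.4}). By {\bf A.7}(a)--(b), distinct $\le_{\fin}$-successors of $r_{\ell(b)}(A)$ are carried by the finitized $\circ$ to distinct admissible values of $s(\ell(b)+1,\cdot)$ while $s(\ell(b),\cdot)=b$ is preserved; selecting one different from $s(\ell(b)+1,X)$ and iterating past level $\ell(a)$ yields a $Y$ with $s(\ell(a),Y)\ne a$, the desired contradiction, so $\ell(a)\le\ell(b)$ after all. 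The delicate point throughout is to check that this genuine multiplicity of extensions in $\mathcal R$ is not collapsed by the finitization $\circ$ or by $s$; once that is secured, everything else is bookkeeping with {\bf A.6}(c)--(d) and {\bf A.7}.
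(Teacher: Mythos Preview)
Your forward direction and the reduction of the converse to producing a branching element in $\lgl b\rgl\setminus\lgl a\rgl$ match the paper's argument; the level-rigidity and trichotomy you spell out are left implicit there.

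Where you diverge is in the construction of that branch, and this is where your gap lies. The paper does not invoke {\bf A.7} or the ambient space $\mathcal R$: starting from the witness $Y$ with $b=s(n,Y)$, $a=s(m,Y)$, $n<m$, it simply asserts that one can define $Z\in\mathcal R_k$ by $s(j,Z)=s(j,Y)$ for $j\le n$ and $s(j,Z)=s(m+j,Y)$ for $j>n$, so that $Z\in\lgl b\rgl\setminus\lgl a\rgl$. Your route via {\bf A.7} is considerably more elaborate and, as you yourself acknowledge, incomplete: {\bf A.7}(c) tells you how to compute $s(n,A\circ X)$ from given data, not that an arbitrary $b\in\mathcal{AR}_k$ factors as $r_{\ell(b)}(A)\circ a_0$; and the finitized $\circ$ of {\bf A.7} is required only to be onto, not injective, so the ``non-collapse of multiplicity'' you flag is precisely the issue and is not resolved by the axioms you cite. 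The paper's shortcut avoids all of this machinery, though read literally its formula for $s(j,Z)$ also sits uneasily with the level-rigidity of {\bf A.6}(c) you identified---so at this one step both arguments are, in different ways, leaning on an implicit non-degeneracy of $\mathcal R_k$ (that $\lgl b\rgl$ genuinely branches above level $\ell(b)$) rather than deriving it cleanly from {\bf A.1}--{\bf A.7}.
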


\begin{proof}
Suppose $a\leq b$. If $a=b$ then we are done, so assume $a<b$. Fix $X\in\mathcal R_k$ and $m<n\in\omega$ such that $a=s(m,X)$ and $b=s(n,X)$. Choose $Y\in\lgl b\rgl$. Then there exists $p\in\omega$ such that $b=s(p,Y)$. Therefore, $s(n,X)=s(p,Y)$. By, part (c) of {\bf A.6}, $n=p$ and $\forall j<n$, $s(j,X)=s(j,Y)$. In particular, $s(m,Y)=s(m,X)=a$. Hence $Y\in \lgl a\rgl$. 
Therefore, $\lgl a\rgl\contains\lgl b\rgl$.

Conversely, suppose that $\lgl a\rgl\contains\lgl b\rgl$,
and  choose $Y\in\lgl b\rgl$. By {\bf A.6}, $Y$ can be identified with the sequence  $(s(m,Y))_{m<\omega}$. Since $Y\in \lgl b\rgl$ and $\lgl a\rgl\contains\lgl b\rgl$, there exist $m,n<\omega$ such that $a=s(m,Y)$ and $b=s(n,Y)$. Notice that $n\geq m$ because otherwise, we would have $\lgl a\rgl\not\contains\lgl b\rgl$.
To see this, 
 supposing toward a contradiction that  $n<m$, it suffices to define $Z\in\mathcal R_k$ such that  $s(j,Z)=s(j,Y)$, for $j\leq n$, and $s(j,Z)=s(m+j,Y)$, for $j>n$. Then $Z\in\lgl b\rgl$ but $Z\not\in\lgl a\rgl$, a contradiction. 
Therefore, it is the case that $n\geq m$,  and we conclude $a=b$ or $a<b$. 
\end{proof}

Now we are ready to state the main result of this article.

\begin{thm}\label{main thm}
Suppose $(\mathcal R, \leq, r, (\mathcal R_k)_{k<\omega}, \circ,s)$ satisfies {\bf A.1} -- {\bf A.7}, $\mathcal R$ is metrically closed in $\mathcal{AR}^{\mathbb N}$ and $\mathcal R_k$ is metrically closed in $\mathcal{AR}_k^{\mathbb N}$, $k<\om$. 
For every  $B\in\mathcal R$, every $k<\om$ and every finite Baire-measurable coloring of $\mathcal R_k$, there exists $A\in\mathcal R$ with $A\leq B$ such that $\{ A\circ X : X\in\mathcal R_k\}$ is monochromatic.
\end{thm}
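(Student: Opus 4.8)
The plan is to combine the Abstract Ellentuck Theorem on $\mathcal R$ with the metric Baire Category Theorem on the projected space $\mathcal R_k$, transferring information between the two via the operation $\circ$ and the finitization $s$. Fix $B\in\mathcal R$, $k<\om$, and a Baire-measurable finite coloring $c\colon\mathcal R_k\to t$. Since $\mathcal R_k$ is metrically closed, it satisfies the Baire Category Theorem, so at least one color class $\mathcal X=c^{-1}(i)$ is non-meager, hence comeager in some basic open set $\lgl a\rgl$ for some $a\in\mathcal{AR}_k$; write $\mathcal X=\mathcal O\bigtriangleup\mathcal M$ with $\mathcal O$ metrically open and $\mathcal M$ metrically meager, $\mathcal M=\bigcup_{n}\mathcal M_n$ with each $\mathcal M_n$ nowhere dense. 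The goal is to find $A\le B$ with $\mathcal R_k|A\sse\lgl a\rgl$ and $\mathcal R_k|A\cap\mathcal M=\emptyset$ and $\mathcal R_k|A\sse\mathcal O$, which then forces $\mathcal R_k|A$ to be monochromatic in color $i$; shrinking $A$ further inside $\mathcal R_k|A$ absorbs the open-set condition.

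The core of the argument is a fusion/diagonalization inside $\mathcal R$. I would build a $\sqsubseteq$-increasing sequence of finite approximations $r_0(A)\sqsubset r_1(A)\sqsubset\cdots$ together with a fusion sequence $B=A_0\ge A_1\ge\cdots$ in $\mathcal R$, ensuring at stage $n$ that every $X\in\mathcal R_k|A$ that is ``decided up to depth $n$'' avoids $\mathcal M_n$ and lies in $\mathcal O$. The combinatorial engine for each stage is Axiom \textbf{A.4} applied along $\mathcal R$: for a fixed approximation $b\in\mathcal{AR}_n|A_n$ and the partition of $\mathcal{AR}_{|b|+1}$ according to whether the one-step extensions $\circ$-project (via \textbf{A.7}(c)) into a neighborhood contained in $\mathcal M_n$ or not, \textbf{A.4} lets me pass to some $A_{n+1}\in[\depth_{A_n}(b),A_n]$ homogeneous for this partition; because $\mathcal M_n$ is metrically nowhere dense, the homogeneous side must be the ``avoiding'' side — this is where nowhere density of $\mathcal M_n$ in $(\mathcal{AR}_k)^{\mathbb N}$ is converted into a finite statement about $\mathcal{AR}_k$-extensions using Facts 1--3 and Lemma \ref{end-extension} ($a\le b \iff \lgl a\rgl\contains\lgl b\rgl$). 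Iterating over all the (finitely many at each level, by \textbf{A.2}(a)) relevant approximations $b$ and taking the fusion limit $A=\lim_n A_n\in\mathcal R$ (which exists since $\mathcal R$ is metrically closed), Fact 2 guarantees $\mathcal R_k|A\sse\bigcup\{\lgl a\rgl: a\in\mathcal{AR}_k|A,\ \depth_A(a)>k\}$, so every $X\in\mathcal R_k|A$ is the limit of its $s$-approximations $s(n,A\circ X)=r_n(A)\circ s(n,X)$ (using \textbf{A.6}(b) and \textbf{A.7}(c)), each of which was chosen to keep $X$ out of $\mathcal M_n$ and inside $\mathcal O$. Hence $\mathcal R_k|A\cap\mathcal M=\emptyset$ and $\mathcal R_k|A\sse\mathcal O$, so $\mathcal R_k|A\sse\mathcal X$, and $A\le B$ by \textbf{A.5}(e).

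A point requiring care is the initial localization: I must first pass from $B$ to some $A_0\le B$ with $\mathcal R_k|A_0\sse\lgl a\rgl$, i.e.\ all $\circ$-projections of $A_0$ factor through the chosen basic open set where $\mathcal X$ is comeager. This should follow by choosing $A_0$ so that $r_k(A_0)\circ(\text{appropriate approximation})$ forces $s(k,A_0\circ X)$ to end-extend $a$ for all $X$ — more precisely, using \textbf{A.3} to find a representative with $a\in\mathcal{AR}_k|A_0$ of the right depth, and \textbf{A.7}(b) (monotonicity of $\circ$ in its first argument) to ensure all deeper projections remain in $\lgl a\rgl$; if no such $A_0\le B$ exists one repartitions and uses that \emph{some} color is non-meager in \emph{some} $\lgl a\rgl$ refining downward, which is exactly the content of Baire category applied relative to $\mathcal R_k|B$ (closed, by Remark \ref{projection spaces}).

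\textbf{Main obstacle.} The hard part will be step two: rigorously converting ``$\mathcal M_n$ is metrically nowhere dense in $(\mathcal{AR}_k)^{\mathbb N}$'' into the finite dichotomy to which \textbf{A.4} applies, and checking that the \textbf{A.4}-homogeneous side is forced to be the good one. Nowhere density says that below every approximation there is a further approximation whose neighborhood misses $\mathcal M_n$, but \textbf{A.4} only gives homogeneity for a \emph{fixed} one-level extension set $\mathcal O\sse\mathcal{AR}_{|a|+1}$ along $\mathcal R$; reconciling the ``one more level in $\mathcal R$'' granularity with the ``some finite number of levels in $\mathcal{AR}_k$'' granularity needed to escape a nowhere dense set is the crux, and is precisely where \textbf{A.7}(a) (the lifting property: if $b\circ a<c$ then $c=b'\circ a$ for some $b'>b$) is indispensable — it says the $\circ$-projection is an \emph{open} map onto $\mathcal{AR}_k|A$ levelwise, so a genuine finite refinement in $\mathcal R_k$ can be pulled back to a finite refinement in $\mathcal R$. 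Managing the bookkeeping of the fusion (which approximations $b$ get treated at which stage, so that the limit $A$ handles all of them) is routine but must be set up so that each $X\in\mathcal R_k|A$ is eventually caught by the stage-$n$ construction for every $n$.
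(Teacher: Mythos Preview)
Your proposal has a genuine gap at the ``initial localization'' step, and it is not a minor bookkeeping issue---it is essentially circular. You want to pass from $B$ to some $A_0\le B$ with $\mathcal R_k|A_0\subseteq\langle a\rangle$ for a \emph{fixed} $a\in\mathcal{AR}_k$ determined in advance by where a chosen color class happens to be comeager. But for a general $A_0$, the value $s(k,A_0\circ X)$ genuinely depends on $X$: by \textbf{A.7}(c), if $X\in\langle a'\rangle$ with $a'\in\mathcal{AR}\binom{k}{k}|A_0$ then $s(k,A_0\circ X)=a'$, and there are typically many such $a'$. So $\mathcal R_k|A_0$ spreads over many basic open sets $\langle a'\rangle$, and nothing in \textbf{A.3} or \textbf{A.7}(b) forces all of them to refine a single preassigned $\langle a\rangle$. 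Your fallback (``repartition and use that some color is non-meager in some $\langle a\rangle$ refining downward'') does not escape the problem: you still need to land $\mathcal R_k|A_0$ inside that refined $\langle a\rangle$, which is the same unproven step. In effect, localizing $\mathcal R_k|A_0$ to one basic open set already requires the kind of homogenization the theorem is meant to deliver.

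The paper avoids this entirely by treating \emph{all} colors at once and postponing the choice of a single color to the very end. One takes open sets $\mathcal B_i$ with $c^{-1}(\{i\})\bigtriangleup\mathcal B_i$ meager; then $\bigcup_i\mathcal B_i$ is dense (here Baire category is used globally on $\mathcal R_k|B$, not inside a single $\langle a\rangle$). A direct density argument---not \textbf{A.4}---using the lifting property \textbf{A.7}(a) produces, for each level, a one-step extension $c$ of a given $b\in\mathcal{AR}_m|A$ such that $\langle c\circ a\rangle$ lands in some $\mathcal B_i\cap\mathcal D_m^*$ for \emph{every} $a\in\mathcal{AR}\binom{m+1}{k}|A$ (this is Lemma~\ref{lemma open}). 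Iterating yields $A\le B$ so that the projection of each basic $\langle a\rangle\cap(\mathcal R_k|B)$ under $A\circ(\cdot)$ sits in some $\mathcal B_i$ and misses the meager set; this defines an induced finite coloring $\hat c$ of $\mathcal{AR}_k|A_0$. Only now is one color selected, by applying the finite Ramsey theorem for approximations (Lemma~\ref{Ramsey thm}, a consequence of $\mathcal R$ being a topological Ramsey space) to $\hat c$. Your plan to use \textbf{A.4} as the engine for meager-avoidance is also off: \textbf{A.4} gives one-level pigeonholing in $\mathcal R$, but escaping a nowhere dense set in $\mathcal R_k$ is a density statement, and the right move is exactly the \textbf{A.7}(a) lift you identified---used constructively, not via a dichotomy.
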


Thus, Theorem \ref{main thm} implies the following.

\begin{coro}\label{main cor}
Suppose $(\mathcal R, \leq, r, (\mathcal R_k)_{k<\omega}, \circ,s)$ satisfies {\bf A.1} -- {\bf A.7}, $\mathcal R$ is metrically closed in $\mathcal{AR}^{\mathbb N}$ and for all $k<\om$, $\mathcal R_k$ is metrically closed in $\mathcal{AR}_k^{\mathbb N}$. Then
for all $k<\om$, every Baire subset of $\mathcal R_k$ is Ramsey.
\end{coro}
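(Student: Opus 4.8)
The plan is to obtain Corollary \ref{main cor} as an immediate consequence of Theorem \ref{main thm}, without reproving any of the combinatorics. The strategy is to translate the assertion that a single Baire set is Ramsey into the assertion that an associated two-valued coloring admits a monochromatic set of the form $\{A\circ X : X\in\mathcal R_k\}$, which is exactly what Theorem \ref{main thm} delivers.

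First I fix $k<\om$ and an arbitrary Baire subset $\mathcal X\subseteq\mathcal R_k$, and I form the characteristic coloring $c=\chi_{\mathcal X}\colon\mathcal R_k\to\{0,1\}$, whose color classes are $c^{-1}(1)=\mathcal X$ and $c^{-1}(0)=\mathcal R_k\setminus\mathcal X$. Since the subsets of the metrizable space $\mathcal R_k$ enjoying the property of Baire form a $\sigma$-algebra, and in particular are closed under complementation, the set $\mathcal R_k\setminus\mathcal X$ is again Baire. Hence both color classes are Baire, and $c$ is a finite Baire-measurable coloring of $\mathcal R_k$ in the precise sense required by the hypothesis of Theorem \ref{main thm}.

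Second I invoke Theorem \ref{main thm} directly. Given any $B\in\mathcal R$, it produces $A\in\mathcal R$ with $A\leq B$ such that $\{A\circ X : X\in\mathcal R_k\}=\mathcal R_k|A$ is monochromatic for $c$. Unpacking the word \emph{monochromatic}: if every element of $\mathcal R_k|A$ receives color $1$ then $\mathcal R_k|A\subseteq\mathcal X$, whereas if every element receives color $0$ then $\mathcal R_k|A\cap\mathcal X=\emptyset$. These two alternatives are exactly the two disjuncts appearing in Definition \ref{ramsey sets}, so in either case $A$ witnesses the Ramsey condition for $\mathcal X$ below $B$. Since $B$ was arbitrary, $\mathcal X$ is Ramsey; and since $k$ and $\mathcal X$ were arbitrary, every Baire subset of every $\mathcal R_k$ is Ramsey.

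I do not expect a genuine obstacle here, since all of the hard work has already been absorbed into Theorem \ref{main thm}. The only point deserving a sentence of justification is the verification that the characteristic coloring $\chi_{\mathcal X}$ really falls under the hypothesis ``finite Baire-measurable coloring,'' which rests solely on the standard fact that the property of Baire in the Tychonoff topology on $\mathcal R_k$ is preserved under complements. No further appeal to Axioms \textbf{A.5}--\textbf{A.7} or to metric closure is needed at this stage, as those were already consumed in establishing the theorem; the corollary is in effect merely a reformulation of its conclusion for the special case of two colors.
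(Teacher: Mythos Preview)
Your proposal is correct and matches the paper's approach exactly: the paper does not give a separate proof of the corollary, merely stating ``Thus, Theorem \ref{main thm} implies the following,'' and your argument is precisely the natural two-color specialization that makes this implication explicit.
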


In order to prove Theorem \ref{main thm} we will use the following lemmas. For the proofs of these lemmas, we will assume that $(\mathcal R, \leq, r, (\mathcal R_k)_{k<\omega}, \circ,s)$ satisfies {\bf A.1} - {\bf A.7} and $\mathcal R$ is metrically closed in $\mathcal{AR}^{\mathbb N}$. Notice that Theorem \ref{main thm} is a generalization of the main result in \cite{PromVoi}. The following proofs are based on the techniques used in \cite{PromVoi}.

\begin{lem}\label{lemma open}
Let $A\in\mathcal R$ and $k,m,n\in\omega$ be given, with $m\geq k$.  Let $\mathcal B_i,\ i< n$, be open subsets of $\mathcal R_k$ such that $\bigcup_{i<n}  \mathcal B_i$ is dense. 
Then for each $b\in\mathcal{AR}_m|A$,
 there is a $c\in r_{m+1}[b,A]$
 satisfying that for every $a\in\mathcal{AR}\binom{m+1}{k}|A$ there is an  $i<n$  such that $\lgl c\circ a \rgl\ \subseteq \mathcal B_i$.
\end{lem}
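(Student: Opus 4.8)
\textbf{Proof proposal for Lemma \ref{lemma open}.}

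The plan is to fix $b\in\mathcal{AR}_m|A$ and enumerate the finitely many elements $a_0,\dots,a_{p-1}$ of $\mathcal{AR}\binom{m+1}{k}|A$ that extend $b$ (these are exactly the $a$ that could arise as $s(k,\cdot)$-initial data below a one-step extension $c\in r_{m+1}[b,A]$; by {\bf A.7} the finitization function sends $\mathcal{AR}\binom{m+1}{m}|A\times\mathcal{AR}\binom{m}{k}|A$ onto $\mathcal{AR}\binom{m+1}{k}|A$, so this set is finite by {\bf A.2}(a)). For a single $a_j$ and a single $c\in r_{m+1}[b,A]$, part (c) of {\bf A.7} tells us that for any $X\in\mathcal R_k$ with $X\in\langle a_j\rangle$ and any $A'\in\mathcal R$ with $r_{m+1}(A')=c$, the $s$-approximation $s(m+1,A'\circ X)=r_{m+1}(A')\circ a_j=c\circ a_j$, so $\langle c\circ a_j\rangle$ is precisely a basic open set capturing the relevant piece of $\mathcal R_k$; thus the statement ``$\langle c\circ a_j\rangle\subseteq\mathcal B_i$ for some $i<n$'' is the natural ``stabilization'' we want to force.

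The core of the argument is a one-step combinatorial/density push, done for each $a_j$ in turn, followed by iterating over $j<p$. First I would treat a single pair $(b,a_j)$: I claim there is $c\in r_{m+1}[b,A]$ with $\langle c\circ a_j\rangle\subseteq\mathcal B_i$ for some $i<n$. To see this, pick any $X_0\in\langle a_j\rangle$ — more precisely, use the metric closedness of $\mathcal R_k$ to find $X$ lying in $\bigcap$ of an appropriate decreasing chain of $\langle\cdot\rangle$'s starting with $a_j$ and realized inside $\mathcal{AR}_k|A$ below $b$ (using Fact 2 and {\bf A.6}(b) to ensure such $X$ exist with $A\circ X$ having $s(m+1,\cdot)$ an extension of a candidate $c\circ a_j$). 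Since $\bigcup_{i<n}\mathcal B_i$ is dense and each $\mathcal B_i$ is open in $\mathcal R_k$, the point $A\circ X$ (or a suitable translate) lies in the closure of some $\mathcal B_i$, hence some basic neighborhood $\langle d\rangle$ of it is contained in $\mathcal B_i$; now $d$ refines $c\circ a_j$ for the appropriate $c\sqsubseteq$-compatible with the chain, and using {\bf A.7}(a)--(b) (which say the finitization of $\circ$ is monotone in the first coordinate and that refinements of $b\circ a$ are witnessed by refinements of $b$) I can pull $d$ back to get the required $c\in r_{m+1}[b,A]$ with $\langle c\circ a_j\rangle\subseteq\langle d\rangle\subseteq\mathcal B_i$. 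Then I iterate: having secured $c^{(0)}$ for $a_0$, I repeat the argument inside $[c^{(0)},A]$ — i.e.\ working below an $A'\in[c^{(0)},A]$ — to secure a further one-step extension handling $a_1$ while preserving the $a_0$-clause, and so on through $a_{p-1}$; finitely many applications of {\bf A.3} and Fact 3 (which guarantees $\mathcal R_k|(A\circ B)\subseteq\mathcal R_k|A$, so restricting $A$ does no harm) yield a single $c$ that works simultaneously for every $a\in\mathcal{AR}\binom{m+1}{k}|A$.

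The main obstacle I anticipate is the first, ``single pair'' step: translating topological density of $\bigcup_i\mathcal B_i$ in the metric space $\mathcal R_k$ into the existence of a \emph{finite} witness $c$ at level $m+1$. Density only gives a point of some $\mathcal B_i$ arbitrarily close to $A\circ X$, i.e.\ agreeing with it on a long but a priori unbounded initial segment of $s$-approximations; one must use openness of $\mathcal B_i$ to get a whole basic set $\langle d\rangle\subseteq\mathcal B_i$, and then — crucially — argue that $d$ can be taken of the form $c\circ a_j$ with $|c|=m+1$, rather than at some deeper level. This is where {\bf A.7}(a) does the real work: it lets a deep refinement $c'\circ a_j < d$-type situation be ``lifted'' to the level-$(m+1)$ node $c$ that we need, and {\bf A.7}(c) pins down that the resulting $\langle c\circ a_j\rangle$ is exactly the image under $\pi$ of $[c,A]$-many coordinates. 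Checking that these finitization axioms genuinely permit the lift — in particular that the depth bookkeeping in {\bf A.6}(b) and {\bf A.7} is consistent across the finitely many iterations — is the delicate bookkeeping I would spend the most care on; the rest is a routine finite induction.
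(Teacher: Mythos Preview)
Your overall plan---enumerate the finitely many $a_0,\dots,a_{p-1}\in\mathcal{AR}\binom{m+1}{k}|A$, handle one $a_j$ at a time via density plus {\bf A.7}(a), then iterate---is the paper's plan, and your reading of {\bf A.7}(a) (a refinement $d>b'\circ a_j$ in $\mathcal{AR}_k$ lifts to $c_0>b'$ in $\mathcal{AR}_{m+1}$ with $d=c_0\circ a_j$) is exactly right. Your single-pair step is more elaborate than necessary: you do not need to manufacture an actual $X\in\langle a_j\rangle$ or look at $A\circ X$; one simply starts with any $b'\in r_{m+1}[b,A]$, notes $\langle b'\circ a_j\rangle$ is basic open and hence meets $\bigcup_i\mathcal B_i$ by density, and then openness of the $\mathcal B_i$ plus Fact~1 produces $d_0>b'\circ a_j$ with $\langle d_0\rangle\subseteq\mathcal B_{j_0}$.

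The iteration is where your proposal drifts from a working argument. You describe passing to $[c^{(0)},A]$, restricting to some $A'\le A$, and invoking {\bf A.3} and Fact~3. That is the wrong mechanism: shrinking $A$ does not help you stay at level $m+1$, it can alter the set $\mathcal{AR}\binom{m+1}{k}|A$ you are obliged to cover, and nothing in that move explains why a new $c^{(1)}\in r_{m+1}[b,A']$ would preserve the $a_0$-clause. The paper never shrinks $A$. It iterates \emph{horizontally} in the $<$ order on $\mathcal{AR}_{m+1}|A$: having obtained $c_0>b'$ handling $a_0$, one reruns the density-plus-{\bf A.7}(a) step with $c_0$ in place of $b'$ and target $a_1$, obtaining $c_1>c_0$ with $\langle c_1\circ a_1\rangle\subseteq\mathcal B_{j_1}$; and so on to $c_l$. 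Preservation of earlier clauses then comes for free from {\bf A.7}(b): $c_p<c_l$ gives $c_p\circ a_p<c_l\circ a_p$, hence $\langle c_l\circ a_p\rangle\subseteq\langle c_p\circ a_p\rangle\subseteq\mathcal B_{j_p}$ by Lemma~\ref{end-extension}. You mention {\bf A.7}(b) in passing but never deploy it; it is precisely the tool that makes the finite induction go through without touching $A$.

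Finally, you omit a check the paper carries out explicitly: the terminal $c=c_l$ must still lie in $r_{m+1}[b,A]$, i.e.\ one must show $r_m(c)=b$. This is not automatic from the construction (the chain $b'<c_0<\cdots<c_l$ lives in the $<$ order on $\mathcal{AR}_{m+1}$, not the $\sqsubset$ order); the paper proves it as a separate claim using {\bf A.6}(c).
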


\begin{proof}
Fix $b\in\mathcal{AR}_m|A$ and let $a_0,a_1,\dots,a_{l}$ be an enumeration of $\mathcal{AR}\binom{m+1}{k}|A$. 
Let $b'\in r_{m+1}[b,A]$.
 Since $\langle b'\circ a_0 \rangle$ is open in $\mathcal R_k$ and $\bigcup_{i<n}  \mathcal B_i$ is dense open, 
their intersection is nonempty.
Thus,
by Fact 1, there exists $d_0\in\mathcal{AR}_k|A$ and $j_0<n$ such that  $d_0 > b'\circ a_0$  and $\langle d_0\rangle \subseteq \mathcal B_{j_0}$. 
By part (a) of {\bf A.7}, there exists $c_0\in\mathcal{AR}_{m+1}|A$ such that  $c_0 > b'$ and $d_0=c_0\circ a_0$. 
Thus, $\langle c_0\circ a_0\rangle \subseteq \mathcal B_{j_0}$.
Using Fact 1 and   part (a) of {\bf A.7}, we can inductively build a sequence $b'<c_0< c_1< \dots < c_{l}$ 
and find integers $j_0,j_1,\dots, j_{l}<n$ such that for every $p\le l$, $c_p\in\mathcal{AR}_{m+1}|A$, $\langle c_p\circ a_p\rangle \subseteq \mathcal B_{j_p}$, and $c_p>b'$.
 Let $c=c_{l}$. 
Notice that by  part (b) of {\bf A.7}, for every $p\le l$, $b_p<C$. Then by Lemma \ref{end-extension}, $\langle c\circ a_p\rangle \subseteq \langle b_p\circ a_p\rangle \subseteq \mathcal B_{j_p}$. 

\begin{claim}$r_m(c)=b$.
\end{claim}
\begin{proof}[Proof of the Claim]
 There exists $X\in\mathcal R_{m+1}$ such that $b'=s(m+1,X)$. Therefore, by {\bf A.6}, $b'$ can be identified with the sequence $\{s(j,X)\}_{j\leq n}$. Notice that $r_{m}(b') = s(m,X)$. Since  $b'<c$, there exists $m+1\leq p<q<\omega$ and $Y\in\mathcal R_{m+1}$ such that $b'=s(p,Y)<s(q,Y)=c$. Again, $c$ can be identified with $\{s(j,Y)\}_{j\leq q}$ and $r_{m}(c) = s(m,Y)$. It turns out that $s(m+1, X)=b'=s(p,Y)$. By part (c) of {\bf A.6}, $m+1=p$ and $s(j, X)=s(j,Y)$ for all $j<m+1$. In particular, $b=r_m(b') = s(m,X)=s(m,Y)=r_m(c)$.
\end{proof}

By the Claim, it follows that $b\sqsubset c$ and $c$ is as required.
\end{proof}

\begin{lem}\label{lemma open meager}
Let $B\in\mathcal R$ and $n\in\omega$ be given.
 Let $\mathcal{M}$ be a meager subset of $\mathcal{R}_k|B$, 
and let $\mathcal B_i,\ i< n$, be open subsets of $\mathcal R_k|B$ such that $\bigcup_{i<n} \mathcal B_i$ is dense in $\mathcal R_k|B$.
 Then there is an $A\in\mathcal R$ with $A\leq B$ such that
\begin{enumerate}
\item 
For each $a\in\mathcal{AR}_k|A$ with $\depth_A(a)>k$, there exists an  $i<n$ such that  $\{A\circ X: X\in  \langle a \rangle\cap (\mathcal R_k|B)\}\subseteq \mathcal B_i$; and
\item $\{A\circ X: X\in \mathcal R_k|B \}\cap \mathcal{M} = \emptyset$.
\end{enumerate}
\end{lem}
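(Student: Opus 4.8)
The plan is to build $A \le B$ by a fusion-type construction that simultaneously handles the open dense cover $\bigcup_{i<n}\mathcal B_i$ (for conclusion (1)) and the meager set $\mathcal M$ (for conclusion (2)). Write $\mathcal M = \bigcup_{t<\omega}\mathcal N_t$ as an increasing union of nowhere dense subsets of $\mathcal R_k|B$; since each $\mathcal N_t$ is nowhere dense, its complement contains an open dense set, and by Fact~1 the sets $\langle a\rangle$ with $a\in\mathcal{AR}_k$ form a base, so avoiding $\mathcal N_t$ amounts to steering the approximations $s(\cdot,X)$ into the complement of $\mathcal N_t$. I would construct a sequence $B = A_k \ge A_{k+1}\ge A_{k+2}\ge\cdots$ in $\mathcal R$, together with the associated stabilized approximations $b_m := r_m(A_m)$, so that the $b_m$ cohere along the $A_j$'s and $A := \lim_m A_m$ exists in $\mathcal R$ (here using that $\mathcal R$ is metrically closed in $\mathcal{AR}^{\mathbb N}$, together with the standard fusion machinery guaranteed by {\bf A.1}--{\bf A.4}). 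At stage $m+1$, with $b_m = r_m(A_m)$ already chosen, I apply Lemma~\ref{lemma open} to $A_m$, $k$, $m$, and the open dense cover $\bigcup_{i<n}(\mathcal B_i\cap \mathcal R_k|B)$ of $\mathcal R_k|B$ — restricted appropriately to $\mathcal R_k|A_m$, viewed inside $\mathcal R_k|B$ — to obtain $c \in r_{m+1}[b_m,A_m]$ such that for every $a\in\mathcal{AR}\binom{m+1}{k}|A_m$ there is $i<n$ with $\langle c\circ a\rangle\subseteq\mathcal B_i$. I then shrink $A_m$ below this $c$ (using {\bf A.3}) to get a candidate $A_{m+1}'$ with $r_{m+1}(A_{m+1}') = c$, and finally shrink once more to diagonalize against $\mathcal N_{m}$: since $\mathcal N_m$ is nowhere dense, for each of the finitely many $a\in\mathcal{AR}\binom{m+1}{k}|A_{m+1}'$ I can find inside $\langle c\circ a\rangle$ a basic open $\langle d_a\rangle$ disjoint from $\mathcal N_m$ with $c\circ a < d_a$, then lift all the $d_a$ back up simultaneously to a single $A_{m+1}\le A_{m+1}'$ via repeated use of part~(a) of {\bf A.7} exactly as in the inductive step of Lemma~\ref{lemma open}, keeping $r_{m+1}(A_{m+1}) = c$.

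Once $A = \lim_m A_m$ is defined, I verify the two conclusions. For (1): fix $a\in\mathcal{AR}_k|A$ with $\depth_A(a) = m+1 > k$; then $a\in\mathcal{AR}\binom{m+1}{k}|A$ and $r_{m+1}(A) = b_{m+1} = r_{m+1}(A_{m+1}) = c$ (the witness chosen at stage $m+1$), so by the choice of $c$ there is $i<n$ with $\langle c\circ a\rangle\subseteq\mathcal B_i$. Now if $X\in\langle a\rangle\cap(\mathcal R_k|B)$, then $A\circ X\in\mathcal R_k$ and by part~(c) of {\bf A.7}, $s(m+1,A\circ X) = r_{m+1}(A)\circ a = c\circ a$, hence $A\circ X\in\langle c\circ a\rangle\subseteq\mathcal B_i$; this gives $\{A\circ X : X\in\langle a\rangle\cap(\mathcal R_k|B)\}\subseteq\mathcal B_i$. (Here I also invoke Fact~2 to know the relevant $a$'s — those with $\depth_A(a)>k$ — cover all the points $A\circ X$ we need.) For (2): take any $X\in\mathcal R_k|B$ and suppose toward a contradiction that $A\circ X\in\mathcal M = \bigcup_t\mathcal N_t$, say $A\circ X\in\mathcal N_t$. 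Let $a = s(m+1,A\circ X)$ where $m+1 = \depth_A$ of the relevant approximation with $m\ge t$; by part~(c) of {\bf A.7} and part~(b) of {\bf A.6}, $a\in\mathcal{AR}\binom{m+1}{k}|A$, and $a = (r_{m+1}(A))\circ a'$ for the corresponding $a'\in\mathcal{AR}\binom{m+1}{k}|A$ lying below it — more precisely, $A\circ X\in\langle d_{a'}\rangle$ for the $d_{a'}$ chosen at stage $m+1$ to avoid $\mathcal N_m\supseteq\mathcal N_t$, using Lemma~\ref{end-extension} to pass from the stage-$(m+1)$ approximation up to $d_{a'}$. This contradicts $\langle d_{a'}\rangle\cap\mathcal N_m = \emptyset$. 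Hence $\{A\circ X : X\in\mathcal R_k|B\}\cap\mathcal M = \emptyset$.

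The main obstacle I expect is bookkeeping the fusion so that everything stays inside $\mathcal R_k|B$ while the diagonalization against the $\mathcal N_m$ is performed at the level of $\mathcal R_k|A_m$: one must check that the open dense cover and the nowhere dense sets behave well under the restriction maps $\pi(\,\cdot\,) = (\,\cdot\,)\circ X$ and that the lifts produced by part~(a) of {\bf A.7} can be amalgamated over the finitely many $a\in\mathcal{AR}\binom{m+1}{k}|A_m$ without destroying $r_{m+1} = c$ — this is precisely the subtlety already met in Lemma~\ref{lemma open}, but now it must be iterated coherently across all stages $m$, and the coherence of the $b_m$'s (via {\bf A.2}(c) and {\bf A.3}) together with metric closedness of $\mathcal R$ is what makes the limit $A$ land in $\mathcal R$ and satisfy $r_{m}(A) = b_m$ for all $m$. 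A secondary point requiring care is the $\depth_A(a)>k$ hypothesis in conclusion~(1): one must ensure, via part~(b) of {\bf A.6}, that the depths $\depth_A s(n, A\circ X)$ are strictly increasing and eventually exceed $k$, so that every $X\in\mathcal R_k|B$ is caught by some $a$ with $\depth_A(a)>k$ handled at a stage beyond $t$.
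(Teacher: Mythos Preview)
Your approach is essentially correct and follows the same iterative strategy as the paper, but it introduces two complications that the paper avoids.

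First, and most importantly, the paper merges your two refinement steps into a single application of Lemma~\ref{lemma open}. Writing $\mathcal{M}\subseteq(\mathcal{R}_k|B)\setminus\bigcap_m\mathcal{D}_m$ with each $\mathcal{D}_m$ dense open and $\mathcal{D}_m^*=\bigcap_{l\le m}\mathcal{D}_l$, at stage $m$ the paper applies Lemma~\ref{lemma open} directly to the family $\{\mathcal{B}_i\cap\mathcal{D}_m^*\}_{i<n}$, whose union is dense open in $\mathcal{R}_k|B$ by Baire. The resulting $b_{m+1}$ then satisfies $\langle b_{m+1}\circ a\rangle\cap(\mathcal{R}_k|B)\subseteq\mathcal{B}_i\cap\mathcal{D}_m^*$ for some $i$, which handles conclusions~(1) and~(2) simultaneously. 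Your separate second pass to avoid $\mathcal{N}_m$ is therefore redundant.

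Second, the paper does not build a fusion sequence $A_k\ge A_{k+1}\ge\cdots$ in $\mathcal{R}$ at all. It simply constructs approximations $b_m\in\mathcal{AR}_{k+m}|B$ with $b_m\sqsubset b_{m+1}$, working inside the fixed ambient $B$ throughout, and sets $A=\lim_m b_m$ using metric closedness. The intermediate ``shrink to $A_{m+1}'$ via \textbf{A.3}'' step in your outline is unnecessary.

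One small slip worth flagging: you write ``keeping $r_{m+1}(A_{m+1})=c$'' after the second lift, but the lift via \textbf{A.7}(a) produces some $c'>c$ in $\mathcal{AR}_{m+1}$, so the $(m{+}1)$st approximation of $A$ is $c'$, not $c$. Your verification of (1) is then rescued by $\langle c'\circ a\rangle\subseteq\langle c\circ a\rangle$ (via \textbf{A.7}(b) and Lemma~\ref{end-extension}), so the argument still goes through, but the write-up as stated is inconsistent.
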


\begin{proof}
We will use Lemma \ref{lemma open}, relativized to $\mathcal R_k|B$. The proof of the relativized version is analogous, passing to the relative topology and using the fact that if $\mathcal R_k$ is metrically closed in $\mathcal{AR}_k^{\mathbb N}$ then $\mathcal R_k|B$ is also metrically closed in $\mathcal{AR}_k^{\mathbb N}$.  Since $\mathcal{M}\subseteq\mathcal R_k|B$ is meager, there exists a sequence $\mathcal D_m\subseteq\mathcal R_k|B$, $m<\omega$, of dense open sets such that $\mathcal{M}\subseteq\mathcal (R_k|B)\setminus\bigcap_{m<\omega}\mathcal D_m$. For every $m<\omega$, let $\mathcal D_m^*=\bigcap_{l\leq m} \mathcal D_m$. 
Let $b=r_k(B)$.

Since  $\mathcal D_0^*$ and $\bigcup_{i<n}  \mathcal B_i$ are dense open in $\mathcal R_k|B$, $\langle b\rangle$ is open, and $b\in\mathcal{AR}_k|B$,
it follows that there is some $i<n$ for which 
 $\langle b\rangle\cap \mathcal D_0^*\cap \mathcal B_{i}\neq\emptyset$.
 Since $\mathcal D_0^*\cap \mathcal B_{i}$ is open in $\mathcal R_k|B$, by Fact 1, there is a  $b_0\in\mathcal{AR}_k|B$ such that $b_0>b$ and $\langle b_0\rangle \cap (\mathcal R_k|B) \subseteq\ \mathcal D_0^*\cap \mathcal B_{i}$. 

 Let us build a sequence $(b_m)_{m<\omega}$, which in the limit will give us $A$, as follows.
Suppose $b_m$ has been defined. 
By Lemma \ref{lemma open}, there is a  $b_{m+1}\in\mathcal{AR}_{k+m+1}|B$, with $b_m\sqsubset b_{m+1}$, such that for every $a\in\mathcal{AR}\binom{k+m+1}{k}|B$,
 there exists some $i<n$ such that $\langle b_{m+1}\circ a\rangle\cap (\mathcal{R}_k|B) \subseteq \mathcal B_{i}\cap \mathcal D_m^*$.
 Let $A=\lim_m b_m$.
Then $A\in \mathcal R$.

We claim that  $A$ is as required. 
To see this,
let $a\in\mathcal{AR}_k|A$ such that  $\depth_A(a)>k$ be given,
and let $m$ be such that  $k+m+1=\depth_B(a)$. 
By our construction,
there is some $i<n$ such that
 $\langle b_{m+1}\circ a\rangle \cap (\mathcal R_k|B) \subseteq \mathcal B_{i}$. 
Since $b_{m+1}=r_{k+m+1}(A)$ and $a\in\mathcal{AR}{k+m+1\choose k}|A$,
for each
$X\in \langle a \rangle$,
 it follows from
 part (c) of {\bf A.7}  that
$s(k+m+1, A\circ X)=b_{m+1}\circ a$. 
Thus, 
\begin{equation}\label{eq.1}
A\circ X\in\langle b_{m+1}\circ a\rangle\cap (\mathcal{R}_k|B) 
\subseteq
 \mathcal B_{i}\cap \mathcal D_m^*.
\end{equation}
In particular, (1) holds.

We now check that (2) holds.
Let $X\in\mathcal{R}_k|B$ be given.
Let $m<\om$ be given, and let $a=s(k+m+1,X)$.
Then $X\in\lgl a\rgl$, and $k+m+1=\depth_B(a)$.
By Equation (\ref{eq.1}),
$A\circ X\in\mathcal{D}^*_m$.
Since this holds for all $m<\om$,
we find that $A\circ X\in\mathcal{M}$.
\end{proof}

Since $\mathcal R$ is a topological Ramsey space the following analog of Ramsey's Theorem is true (see \cite{Mijares,Todo}).

\begin{lem}\label{Ramsey thm}
For every $B\in\mathcal R$ and every finite coloring of $\mathcal{AR}_k$, there exists $A\in\mathcal R$ with $A\leq B$ such that $\mathcal{AR}_k|A$ is monochromatic.
\end{lem}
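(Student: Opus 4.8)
The plan is to prove Lemma~\ref{Ramsey thm} by deriving it as a special case of the Ramsey property for $\mathcal{R}$ (Definition~\ref{defn.5.2}) together with the Abstract Ellentuck Theorem~\ref{thm.AET}, which applies since $(\mathcal{R},\le,r)$ satisfies \textbf{A.1}--\textbf{A.4} and is metrically closed. Fix $B\in\mathcal{R}$, fix $k<\om$, and fix a coloring $c:\mathcal{AR}_k\to t$ with $t<\om$. The natural move is to transfer $c$ to a coloring of the space $\mathcal{R}$ itself via the approximation map $r_k$: define $\mathcal{X}_j=\{A'\in\mathcal{R}: c(r_k(A'))=j\}$ for each $j<t$. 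Each $\mathcal{X}_j$ is determined by the single coordinate $r_k$, hence is clopen in the Ellentuck (and even in the metric) topology on $\mathcal{R}$, so in particular has the property of Baire. Since the $\mathcal{X}_j$ partition $\mathcal{R}$, one of them, say $\mathcal{X}_{j_0}$, is non-meager; but more directly we can just apply the Ramsey property iteratively.

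First I would apply the Ramsey property of $\mathcal{R}$ to $\mathcal{X}_0$ with the basic set $[\emptyset,B]$ (note $[\emptyset,B]\ne\emptyset$ by \textbf{A.1}(a) and \textbf{A.3}(a), since $\depth_B(\emptyset)=0$), obtaining $B_1\in[\emptyset,B]$, i.e.\ $B_1\le B$, such that $[\emptyset,B_1]\sse\mathcal{X}_0$ or $[\emptyset,B_1]\cap\mathcal{X}_0=\emptyset$. If the first alternative holds we are done with color $0$. Otherwise iterate: apply the Ramsey property to $\mathcal{X}_1$ and $[\emptyset,B_1]$ to get $B_2\le B_1$, and so on. After at most $t$ steps we reach some $B_\ell\le B$ with $[\emptyset,B_\ell]\sse\mathcal{X}_{j}$ for some $j<t$, since the $\mathcal{X}_j$ exhaust $\mathcal{R}$ and $[\emptyset,B_\ell]\ne\emptyset$ cannot be disjoint from all of them. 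Set $A=B_\ell$. Then for every $A'\le A$ we have $A'\in[\emptyset,A]\sse\mathcal{X}_j$, so $c(r_k(A'))=j$.

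It remains to check that $\mathcal{AR}_k|A$ is monochromatic, i.e.\ that every $a\in\mathcal{AR}_k|A$ equals $r_k(A')$ for some $A'\le A$. By the definition preceding \textbf{A.2}, $a\in\mathcal{AR}_k|A$ means $a\in\mathcal{AR}_k$ and $a\le_{\fin} r_m(A)$ for some $m$; then $\depth_A(a)<\infty$, and by \textbf{A.3}(a) the set $[a,A]$ is nonempty, so pick $A'\in[a,A]$, which gives $A'\le A$ and $r_n(A')=a$ for some $n$; since $a\in\mathcal{AR}_k$ and \textbf{A.1}(c) forces the length to be unique, $n=k$, so $r_k(A')=a$. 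Hence $c(a)=c(r_k(A'))=j$ for all $a\in\mathcal{AR}_k|A$, as required.

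The only genuine subtlety is the bookkeeping in the last paragraph — making sure the notion ``$\mathcal{AR}_k|A$'' (finite approximations of length $k$ below $A$) coincides with $\{r_k(A'):A'\le A\}$, which is exactly what \textbf{A.3}(a) and \textbf{A.1}(c) deliver. Everything else is a routine finite iteration of the Ramsey property, which is available because $\mathcal{R}$ is a topological Ramsey space by Theorem~\ref{thm.AET}. Alternatively, one could cite directly the analog of Ramsey's theorem for topological Ramsey spaces from \cite{Mijares,Todo} as indicated in the paper; the argument above simply reconstructs that citation for completeness.
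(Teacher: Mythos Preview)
Your proof is correct and follows essentially the same route the paper points to: the paper does not give an argument but simply cites this as the abstract Nash--Williams theorem (a consequence of the Abstract Ellentuck Theorem) from \cite{Mijares,Todo}, and your write-up is precisely the standard derivation of that fact --- pull the coloring back along $r_k$ to clopen pieces $\mathcal{X}_j\subseteq\mathcal{R}$, apply the Ramsey property finitely many times, and then use \textbf{A.3}(a) with \textbf{A.1}(c) to identify $\mathcal{AR}_k|A$ with $\{r_k(A'):A'\le A\}$. The only cosmetic point is that $[\emptyset,B]\ne\emptyset$ needs no appeal to \textbf{A.3}(a): $B$ itself lies in $[\emptyset,B]$ by \textbf{A.1}(a).
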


\begin{rem}
{\bf A.1} - {\bf A.4} and the assumption that $\mathcal R$ is metrically closed in $\mathcal{AR}^{\mathbb N}$ are sufficient for the proof of Lemma \ref{Ramsey thm}. In fact, Lemma \ref{Ramsey thm} is a special case of the Abstract Nash-Williams Theorem (see \cite{Todo}), which follows from the Abstract Ellentuck Theorem.
\end{rem}

Now, let us prove our main result.

\begin{proof}[Proof of Theorem \ref{main thm}]
Fix $B\in\mathcal R$. Given $n<\omega$, let $c : \mathcal R_k|B \rightarrow n$ be a Baire-measurable coloring. 
Then there exist open sets $\mathcal B_i\subseteq\mathcal R_k|B$, $i<n$, such that the sets 
$$\mathcal{M}_i := (c^{-1}(\{i\})\setminus \mathcal B_i)\cup (\mathcal B_i\setminus c^{-1}(\{i\}))$$ 
are meager in $\mathcal R_k|B$.
Let $\mathcal{M}=\bigcup_{i<n} \mathcal{M}_i$. Then $(\mathcal R_k|B)\setminus \mathcal{M}\subseteq\bigcup_{i<n} \mathcal B_i$. 
Thus, since $\mathcal R_k|B$ satisfies the Baire Category Theorem, $\bigcup_{i<n} \mathcal B_i$ is dense in $\mathcal R_k|B$. 
Choose $A\in\mathcal R$ as in Lemma \ref{lemma open meager} applied to $\mathcal{M}$ and the $\mathcal B_i$'s;
 and let $A_0\le A$ such that $\depth_A(r_k(A_0))>k$.
It follows that for every $a\in\mathcal{AR}_k|A_0$, 
there exists $i<n$ such that $\{A_0\circ X : X\in\ \langle a \rangle\cap\ \mathcal R_k|B\}\subseteq \mathcal B_i\cap c^{-1}(\{i\})$. 
In particular, for every $a\in\mathcal{AR}_k|A_0$, $c$ is constant on $\{A_0\circ X : X\in\ \langle a \rangle\cap\ \mathcal R_k|B\}$.

Define $\hat{c}: \mathcal{AR}_k|A_0 \rightarrow n$ by $\hat{c}(a) = c(A_0\circ X)$, for any $X\in\ \langle a \rangle\cap\ \mathcal R_k|B$. 
By Lemma \ref{Ramsey thm}, there exists $A_1\leq A_0$ such that $\hat{c}$ is constant on $\mathcal{AR}_k|A_1$. 
By the definition of $\hat{c}$, it follows that 
for all  $X',Y'\in\mathcal{R}_k|A_1\circ B$,
there are some $a,a'\in\mathcal{AR}_k|A_1$ such that $X'\in\lgl a\rgl$ and $Y'\in\lgl a'\rgl$;
and thus $c(A_0\circ X')=
c(A_0\circ Y')$.
 By Fact  3,  the set $\{(A_0\circ A_1)\circ X : X\in\mathcal R_k|B\} = \{(A_0\circ A_1)\circ (B\circ  X) : X\in\mathcal R_k\}$. 
Letting  $A=A_1\circ A_0\circ B$, by Fact 2, we see that  $c$ is 
is monochromatic on $\mathcal{R}_k|A$.

\end{proof}

\section{Generalized ascending parameter words and block sequences}\label{general parameter}

 We introduce a class of topological Ramsey spaces which generalizes the spaces of ascen-\linebreak ding parameter words studied by Carlson-Simpson \cite{CarSim2} and  Pr\"omel-Voigt \cite{PromVoi} (see Section \ref{ascending}), and which turns out to be also a generalization of the spaces $\FIN_m^{[\infty]}$, $0<m<\omega$, of block sequences defined by Todorcevic \cite{Todo}. We show that each element of this class admits projection spaces where every Baire set is Ramsey, fitting into the abstract setting introduced in Section \ref{setting}. In order to show that our space is a topological Ramsey space, we use an infinitary version of the Hales-Jewett Theorem to deduce a pigeon hole principle which generalizes Gowers' Theorem \cite{gowers}.

\subsection{Generalized ascending parameter words}

Let $X$ and $Y$ be two nonempty sets of integers. Given a set $S\subseteq X\times Y$, let $\dom(S)=\{i\in X : (\exists j\in Y)\ (i,j)\in S\}$ and $\ran(S)=\{j\in Y : (\exists i\in X)\ (i,j)\in S\}$. As customary, we will identify each integer $m>0$ with the set $\{0,1,\dots,m-1\}$. Let $\omega$ be the set of nonnegative integers. Given $t,m<\omega$, with $m>0$, and $\alpha\leq\beta\leq\omega$, let $\mathcal S_t^{<}\binom{\beta,m}{\alpha}$ denote the set of all the surjective functions $A: (t+\beta)\times m \rightarrow t+\alpha$ satisfying

\begin{enumerate}
\item $A(i,l)=i$ for every $i<t$ and every $l<m$.
\item For all $j<\alpha$, $A^{-1}(\{t+j\})$ is a function; that is, for all $i\in\dom(A^{-1}(\{t+j\}))$ there exists a unique $l< m$ such that $(i,l)\in A^{-1}(\{t+j\})$.
\item For all $j<\alpha$, $\dom A^{-1}(\{t+j\})$ is a finite set.
\item For all $j<t+\alpha$, $m-1\in \ran(A^{-1}(\{j\}))$.
\item $\min\dom A^{-1}(\{i\})< \min \dom A^{-1}(\{j\})$ for all $i<j<t+\alpha$.
\item $\max \dom A^{-1}(\{t+i\})<\min \dom A^{-1}(\{t+j\})$ for all $i<j<\alpha$.
\end{enumerate}

\subparagraph*{\bf The tetris operation} For $S\subseteq (t+\beta)\times m$, let $T(S)=\{(i,\max\{0,j-1\}) : (i,j)\in S\}$. For $l<\omega$, let us define $T^{l}(S)$ recursively, as follows. $T^0(S)=S$, $T^1(S)= T(S)$ and $T^{l+1}(S)=T(T^l(S))$.

\medskip

\subparagraph*{\bf The composition} For $A\in\mathcal S_t\binom{\gamma,m}{\beta}$  and $B\in\mathcal S_t\binom{\beta,m}{\alpha}$, the operation $A\cdot B\in\mathcal S_t\binom{\gamma,m}{\alpha}$ is defined by $(A\cdot B)(i,j) = B(A(i,j),m-1)$. 

\begin{rem}
In Theorem \ref{general parameter space} below we will prove that $\mathcal S_t^{<}\binom{\omega,m}{\omega}$ is a topological Ramsey space. Notice that for $t=0$, $\mathcal S_0^{<}\binom{\omega,1}{\omega}$ is essentially the set of infinite subsets of $\omega$, so as a topological Ramsey space $\mathcal S_0^{<}\binom{\omega,1}{\omega}$ will coincide with Ellentuck's space \cite{Ellen}; and $\mathcal S_0^{<}\binom{\omega,m}{\omega}=\emptyset$, for $m>1$. So we will assume $t>0$ throughout the rest of this section.
\end{rem}
\begin{rem}\label{block seq} Let $0<m<\omega$ be given. For a function $p\colon\omega\to \{0,1,\dots, m\}$, let $\supp(p)=\{i\in\omega : p(i)\neq 0\}$. Denote by $\FIN_m$ the collection of all the functions $p\colon\omega\to \{0,1,\dots, m\}$ such that $\supp(p)$ is finite and $m\in \ran(p)$. A \textbf{block sequence} of elements of $\FIN_m$ is a sequence $(p_n)_{n<\omega}$ such that $\max\ \supp(p_n)<\min\ \supp(p_{n+1})$,  for all $n<\omega$. Let $\FIN_m^{[\infty]}$ be the collection of all such block sequences. Notice that for all $0<m<\omega$, $\mathcal S_1^{<}\binom{\omega,m}{\omega}$ can be identified with $\FIN_m^{[\infty]}$: A block sequence $P= (p_n)_{n<\omega}\in \FIN_m^{[\infty]}$ determines a function $A_P\in \mathcal S_1^{<}\binom{\omega,m}{\omega}$ defined as follows:

\[A_P(i,j) =\left\{ \begin{array}{rcl}
0\ \ \ \  & \mbox{if} & (\forall n<\omega)\ i\notin\supp(p_n),\\
0\ \ \ \  & \mbox{if} & (\exists n<\omega)\ i\in\supp(p_n)\ \wedge\ p_n(i)\neq j+1,\\
n+1\ & \mbox{if} & i\in\supp(p_n)\ \wedge\ p_n(i)=j+1.\\
\end{array}
\right. \]

Conversely, a function $A\in \mathcal S_1^{<}\binom{\omega,m}{\omega}$ determines a block sequence $P_A= (p_n)_{n<\omega}\in \FIN_m^{[\infty]}$ where, for each $n<\omega$, $p_n$ is  given by \[p_n(i) =\left\{ \begin{array}{rcl}
0\ \ \ \  \ \ \ & \mbox{if} &  i\notin\dom(A^{-1}(\{n+1\})),\\
j+1\ \ \ \  & \mbox{if} & (i,j)\in A^{-1}(\{n+1\}).\\
\end{array}
\right. \]

\end{rem}

\subsection{A topological Ramsey space of generalized ascending parameter words}

The purpose of this section is to prove that $\mathcal S_t^{<}\binom{\omega,m}{\omega}$ is a topological Ramsey space. Define the function $r$ on $\mathbb N\times\mathcal S_t^{<}\binom{\omega,m}{\omega}$ as  $r(n,A) = \emptyset$, if $n= 0$ and 

\[r(n,A) = A\upharpoonright \{(i,l)\in \bigcup_{j<t+n} A^{-1}(\{j\}): i<\min \dom A^{-1}(\{t+n\})\}, \ \ \mbox{if}\ n> 0.\]

\medskip

Let $A\in \mathcal S_t^{<}\binom{\omega,m}{\omega}$ be given. For every $n<\omega$, let $a_n=A^{-1}(\{n\})$, and write $A = \{a_0,a_1,\dots \}$. Let $T$ denote the tetris operation. For $t\leq n<\omega$ and $l< t+m$, define

\[S^l(a_n) =\left\{ \begin{array}{rcl}
T^{l-t}(a_n)\ \ & \mbox{if} &  t\leq l< t+m,\\
& & \\
a_n\cup a_l \ \ & \mbox{if} &  l<t.\\
\end{array}
\right. \]

\noindent Here $a_n\cup a_l$ is the union of $a_n$ and $a_l$ as subsets of $(t+\omega)\times m$.

\medskip

Let $[A]$ denote the collection of all the symbols of the form \linebreak$S^{l_1}(a_{n_1}) + S^{l_2}(a_{n_2}) +\cdots + S^{l_q}(a_{n_q})$ such that $n_i\geq t$ and $l_i< t+m$, for all $i\in\{1,\dots,q\}$, and at least one of the $l_i$'s is equal to $t$. 

We shall identify each $S^{l_1}(a_{n_1}) + S^{l_2}(a_{n_2}) +\cdots + S^{l_q}(a_{n_q})\in [A]$ with a function $f\in\mathcal S_t^{<}\binom{e,m}{1}|A$, for some $0<e<\omega$, as follows. 

Suppose that there exists $i\in\{1,\dots, q\}$ such that $l_i<t$. Let $i_1<\dots<i_p$ be an increasing enumeration of all such $i$'s. Let $j_0\in\{1,\dots, q\}$ be such that $l_{j_0}=t$. Then $S^{l_{j_0}}(a_{n_{j_0}})=a_{n_{j_0}}$. Let $e=\min\dom A^{-1}(\{t+n_{j_0}+1\})$. Define the surjective function $f: (t+e)\times m\rightarrow t+1$ by setting

\begin{enumerate}
\item $f^{-1}(\{j\}) = (t+e)\times m\cap a_j$ for all $j<t$ with $j\notin\{l_{i_1},\dots, l_{i_p}\}$.
\item $f^{-1}(\{l_{i_1}\})=(t+e)\times m\cap\bigcup\{a_n :n\geq t, n\notin\{n_1,\dots, n_q\}\}\cup S^{l_1}(a_{n_1})\cup \cdots \cup S^{l_{i_1}}(a_{n_{i_1}})\setminus a_{n_{j_0}} $.
\item $f^{-1}(\{l_{i_{d+1}}\})= (t+e)\times m\cap S^{l_{i_d+1}}(a_{n_{i_d+1}})\cup \cdots \cup S^{l_{i_{d+1}}}(a_{n_{i_{d+1}}})\setminus a_{n_{j_0}}$, $1\leq d<p$.
\item If there exists $i\in\{1,\dots, q\}$ such that $i>i_p$ then let $f^{-1}(\{t\})= (t+e)\times m\cap S^{l_{i_p+1}}(a_{n_{i_p+1}})\cup \cdots \cup S^{l_{i_q}}(a_{n_q})\cup a_{n_{j_0}}$. Otherwise, let $f^{-1}(\{t\})= (t+e)\times m\cap a_{n_{j_0}}$.
\end{enumerate}

\medskip

If for all $i\in\{1,\dots, q\}$ we have $l_i\geq t$ then set

\begin{enumerate}
\item $f^{-1}(\{0\}) = (t+e)\times m\cap a_0\cup \bigcup\{a_n :n\geq t, n\notin\{n_1,\dots, n_q\}\}$.
\item $f^{-1}(\{j\}) = (t+e)\times m\cap a_j$ for all $0<j<t$.
\item $f^{-1}(\{t\})= (t+e)\times m\cap S^{l_1}(a_{n_1}) + S^{l_2}(a_{n_2}) +\cdots + S^{l_q}(a_{n_q})$.
\end{enumerate}

\medskip

\noindent This finishes the definition of $f$. The condition that at least one of the $l_i$'s is equal to $t$ ensures that $f\in\mathcal S_t^{<}\binom{e,m}{1}$. On the other hand, given $0<e<\omega$, every $f\in\mathcal S_t^{<}\binom{e,m}{1}$ can be represented as an element of $[A]$ for some $A$.

\medskip

\subparagraph*{\bf The quasi-order} If $B\in\mathcal S_t^{<}\binom{\omega,m}{\omega}$ then write $B\leq A$ if $[B]\subseteq [A]$. It can be easily proved that  if there exists $C\in\mathcal S_t^{<}\binom{\omega,m}{\omega}$ such that $B= A\cdot C$ then $B\leq A$. 

\medskip

The following is the main result of this Section. 

\begin{thm}\label{general parameter space}
For every $t>0$ and every $m>0$, $(\mathcal S_t^{<}\binom{\omega,m}{\omega}, \leq, r)$ is a topological Ramsey space.
\end{thm}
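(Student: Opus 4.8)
The plan is to verify the hypotheses of the Abstract Ellentuck Theorem (Theorem~\ref{thm.AET}): that $\mathcal S_t^{<}\binom{\omega,m}{\omega}$, equipped with the approximation map $r$ defined above and the quasi-order $B\le A\iff[B]\sse[A]$, is closed as a subspace of $(\mathcal{AR})^{\bN}$ and satisfies axioms {\bf A.1}--{\bf A.4}. Here $\mathcal{AR}_n=\{r_n(A):A\in\mathcal S_t^{<}\binom{\omega,m}{\omega}\}$ is the set of finite length-$n$ ascending parameter words, and for the finitization $\le_{\fin}$ I would take the obvious finitary analogue of $[\,\cdot\,]\sse[\,\cdot\,]$: declare $a\le_{\fin}b$ iff $a$ is obtained by applying the block operations $S^{l}$ (the iterated tetris operations $T^{l-t}$ for $t\le l<t+m$, and the constant-substitutions $a_n\mapsto a_n\cup a_l$ for $l<t$) to the variable blocks of some initial segment $r_e(b)$ of $b$.

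With these definitions {\bf A.1}, {\bf A.2}, {\bf A.3} are routine bookkeeping. For {\bf A.1}, $r_n(A)$ records exactly the blocks $A^{-1}(\{0\}),\dots,A^{-1}(\{t+n-1\})$ lying to the left of $\min\dom A^{-1}(\{t+n\})$, so initial segments and the length are recoverable from $r_n(A)$, giving (a)--(c). For {\bf A.2}, $\{a:a\le_{\fin}b\}$ is finite because only finitely many blocks of $b$ are involved and they admit only finitely many combinations; clause (b) is the definition of $\le$; and clause (c) is immediate since $\sqsubset$ corresponds to passing to an initial segment of the list of variable blocks. For {\bf A.3}, if $\depth_B(a)=n<\infty$ then $a$ is a combination of the first $n$ variable blocks of $B$, so any $A\in[r_n(B),B]$ agrees with $B$ on those blocks and hence $[a,A]\ne\emptyset$; part (b) is the corresponding thinning statement, which follows from the composition $A\cdot C$ noted before the theorem together with the representation of $[A]$-symbols as functions in $\mathcal S_t^{<}\binom{e,m}{1}|A$.

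The main obstacle is {\bf A.4}, the one-step pigeon-hole principle. Unwinding the definition of $r$, the set $r_{|a|+1}[a,A]$ is in natural bijection with the collection of ``next variable blocks'' that can be built from the blocks of $A$ lying to the right of $a$, and by the correspondence described before the theorem these are exactly the symbols $S^{l_1}(a_{n_1})+\cdots+S^{l_q}(a_{n_q})$ in which at least one $l_i$ equals $t$. Thus {\bf A.4} asserts that every $2$-colouring of this set of variable words over the ``alphabet'' of $t$ constant letters together with $m$ tetris-levels admits an infinite sub-object (below the given $B$) on which it is constant. This is the pigeon-hole principle announced in the section introduction: for $t=1$ it is Gowers' $\FIN_m$ theorem, and for $m=1$ (where the tetris operation is the identity) it is the infinite Hales--Jewett theorem. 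Following the strategy indicated there, I would first prove an infinitary Hales--Jewett theorem for ascending parameter words carrying the $\FIN_m$/tetris structure on the variable coordinate, then deduce from it the required pigeon-hole principle by the fusion argument that proves Gowers' theorem, and finally check that it delivers {\bf A.4} verbatim by building $A\le B$ with $a\sqsubset A$ as a limit of finite approximations each chosen monochromatic for the colouring of next blocks. Establishing this combined principle is where essentially all the work lies.

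Closedness of $\mathcal S_t^{<}\binom{\omega,m}{\omega}$ in $(\mathcal{AR})^{\bN}$ is then a direct-limit check: a coherent sequence $(a_n)_{n<\omega}$ with $a_n=r_n(a_{n+1})$ determines a function $A=\bigcup_n a_n\colon(t+\omega)\times m\to t+\omega$, and conditions (1)--(6) of the definition hold for $A$ because each is finitely witnessed and holds for every $a_n$. Having verified {\bf A.1}--{\bf A.4} and closedness, Theorem~\ref{thm.AET} applies and gives that $(\mathcal S_t^{<}\binom{\omega,m}{\omega},\le,r)$ is a topological Ramsey space.
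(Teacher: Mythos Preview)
Your overall architecture is correct and matches the paper's: verify closedness and {\bf A.1}--{\bf A.4}, then invoke the Abstract Ellentuck Theorem.  Your definition of $\le_{\fin}$ and your treatment of {\bf A.1}--{\bf A.3} and of closedness are essentially what the paper does (it also dispatches these in a sentence).

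The difference lies in how the pigeon-hole principle ({\bf A.4}) is obtained.  You propose to prove a new infinitary Hales--Jewett theorem ``carrying the $\FIN_m$/tetris structure on the variable coordinate'' and then run a Gowers-style fusion on top of it.  The paper avoids this entirely: it shows that the \emph{ordinary} Infinite Hales--Jewett Theorem (Theorem~\ref{infinite HJ}) already suffices, via a single encoding.  Take the alphabet $L=(t+m)\setminus\{t\}$ and let the variable be $v=t$.  Each block $a_n$ of $A$ is represented by the variable word $w_n(v)=v^{2^n}$ (rapidly increasing, so concatenations decode uniquely), and substitution of the letter $l$ into $w_n$ corresponds exactly to the operation $S^l(a_n)$: the constant substitutions $l<t$ and the tetris operations $t<l<t+m$ live side by side as letters, while $l=t$ (the identity $S^t$) is the variable.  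Because the $S^l$ are compatible with concatenation in the right way, the monochromatic subsystem produced by Hales--Jewett translates directly into $B\le A$ with $[B]$ monochromatic (Theorem~\ref{pigeon hole}), and {\bf A.4} follows immediately.

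So your plan would work in principle, but it does strictly more than is needed: the Gowers-type argument you have in mind is subsumed by the single application of Hales--Jewett over the enlarged alphabet.  The payoff of the paper's route is that no new pigeon-hole theorem has to be proved---the whole combinatorial core is outsourced to a result already in the literature.
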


The case $t=1$ of Theorem \ref{general parameter space} was proved by Todorcevic and in virtue of Remark \ref{block seq} it can be restated as follows.

\begin{thm}[Todorcevic,  Theorem 5.22 in \cite{Todo}]\label{FIN_m}
For every $m>0$, $(\mathcal S_1^{<}\binom{\omega,m}{\omega}, \leq, r)$ is a topological Ramsey space.
\end{thm}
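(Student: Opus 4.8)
The plan is to obtain Theorem~\ref{FIN_m} as the instance $t=1$ of Theorem~\ref{general parameter space}, so that once the general result is in hand the present statement is immediate; it is nonetheless worth laying out the self-contained route, since for $t=1$ the combinatorial core reduces to a classical pigeonhole. By Remark~\ref{block seq}, the correspondence $P\mapsto A_P$ identifies $\FIN_m^{[\infty]}$ with $\mathcal S_1^{<}\binom{\omega,m}{\omega}$, carrying block sequences to surjections, block sums with tetris to the combinations $S^{l_1}(a_{n_1})+\cdots+S^{l_q}(a_{n_q})$ recorded in $[A]$, and the block-sequence order to $\le$. Under this identification it suffices to verify that $(\FIN_m^{[\infty]},\le,r)$ is closed in $(\mathcal{AR})^{\bN}$ and satisfies axioms {\bf A.1}--{\bf A.4}, and then to invoke the Abstract Ellentuck Theorem (Theorem~\ref{thm.AET}).

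First I would dispatch closedness and {\bf A.1}. A nested sequence of finite approximations, each an initial segment of the next as a block sequence, has a unique limit which is again an infinite block sequence, so $\FIN_m^{[\infty]}$ is closed. Axiom {\bf A.1} is immediate: $r_0=\emptyset$, distinct block sequences differ in some term and hence in some approximation, and both the length $n$ and the first $n$ blocks are recovered from $r_n$. Next I would set up the finite quasi-order for {\bf A.2} by declaring $a\le_{\fin}b$ to mean that every block of $a$ is a combination with tetris of blocks of $b$ (the finite analogue of $\le$, which reads $B\le A\iff[B]\subseteq[A]$). Finiteness of $\{a:a\le_{\fin}b\}$ holds because only finitely many such combinations are supported below $\max\supp b$; part~(b) then reproduces $\le$; and the extension property~(c) follows from the block structure, since any initial combination lying below $c$ can already be realized inside $c$. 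The amalgamation axiom {\bf A.3} is routine from the algebra of block sums, using that $[\,\cdot\,]$ is closed under the combination operation and that depth is controlled by $\max\supp$.

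The heart of the matter is the pigeonhole axiom {\bf A.4}, and this is exactly where $t=1$ is special. The set $r_{|a|+1}[a,A]$ consists of the one-block extensions of $a$ obtainable from $A$ beyond $\depth_A(a)$, and each such extension amounts to choosing a single element of $\FIN_m$ from the combinatorial subspace generated by the tail of $A$ above $a$. Thus a partition $\mathcal O\sqcup\mathcal O^c$ of $\mathcal{AR}_{|a|+1}$ induces a finite coloring of that combinatorial subspace, and {\bf A.4} asks for a condition $A'\le A$ on which the induced coloring of next blocks is constant. This is precisely \emph{Gowers' Theorem} for $\FIN_m$ (equivalently, Todorcevic's Theorem~5.22 in \cite{Todo}), which I would invoke directly. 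With {\bf A.1}--{\bf A.4} and closedness established, the Abstract Ellentuck Theorem yields that $(\mathcal S_1^{<}\binom{\omega,m}{\omega},\le,r)$ is a topological Ramsey space. The single obstacle is therefore concentrated entirely in {\bf A.4}, namely Gowers' pigeonhole; in the general case $t>1$ of Theorem~\ref{general parameter space} this step must be replaced by an infinitary Hales--Jewett argument generalizing Gowers, whereas for $t=1$ Gowers' theorem suffices verbatim.
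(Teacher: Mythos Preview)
Your overall architecture is correct and matches the paper's: verify that the space is metrically closed and satisfies {\bf A.1}--{\bf A.4}, then invoke the Abstract Ellentuck Theorem. The paper treats Theorem~\ref{FIN_m} simply as the $t=1$ instance of Theorem~\ref{general parameter space}, whose proof is exactly this: {\bf A.1}--{\bf A.3} and closedness are declared routine, and {\bf A.4} is deduced from the pigeonhole principle Theorem~\ref{pigeon hole}, which the paper establishes for all $t>0$ via the infinite Hales--Jewett theorem. For $t=1$ that pigeonhole is precisely Gowers' $\FIN_m$ theorem, so your choice to quote Gowers directly rather than rederive it from Hales--Jewett is a legitimate shortcut; you even note this explicitly in your last sentence.

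There is, however, one genuine slip you should fix. You write that {\bf A.4} ``is precisely Gowers' Theorem for $\FIN_m$ (equivalently, Todorcevic's Theorem~5.22 in \cite{Todo}), which I would invoke directly.'' But Theorem~5.22 in \cite{Todo} \emph{is} the statement you are trying to prove (that $\FIN_m^{[\infty]}$ is a topological Ramsey space), not Gowers' pigeonhole. Invoking it here is circular. What you want to cite for {\bf A.4} is Gowers' theorem itself \cite{gowers} (or its treatment in Chapter~2 of \cite{Todo}), which gives, for any finite coloring of $\FIN_m$, a block sequence whose combinatorial subspace under sums and tetris is monochromatic. Once that citation is corrected, your argument is sound and aligned with the paper's, the only difference being that the paper supplies a uniform proof of the pigeonhole for all $t>0$ via Hales--Jewett, whereas you quote the $t=1$ case as known.
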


Let $\mathcal S_t^{<}\binom{<\omega,m}{<\omega}$ denote the range of the function $r$. That is, $\mathcal S_t^{<}\binom{<\omega,m}{<\omega}=\mathcal{AR}$ for the space $\mathcal R=\mathcal S_t^{<}\binom{\omega,m}{\omega}$. Notice that $\mathcal S_t^{<}\binom{<\omega,m}{<\omega}=\bigcup_{l\leq n<\omega}\mathcal S_t^{<}\binom{n,m}{l}$.  Before proving Theorem \ref{general parameter space} for $t>1$, we will establish a result concerning finite colorings of $\mathcal S_t^{<}\binom{<\omega,m}{<\omega}$ in Theorem \ref{pigeon hole} below, from which {\bf A.4} for $\mathcal S_t^{<}\binom{\omega,m}{\omega}$ can be easily deduced. In order to prove Theorem \ref{pigeon hole}, we will need an infinitary version of the Hales-Jewett Theorem.

\medskip

Let $L$ be a finite alphabet and let $v\notin L$. Let $W_{L}$ denote the collection of words over $L$, and  let $W_{L\cup\{v\}}$ denote the collection of words $w(v)$ over the alphabet $L\cup\{v\}$, such that the symbol $v$ appears at least once in $w(v)$. The elements of $W_{L\cup\{v\}}$ are called \textbf{variable words}.

Given $w(v)\in W_{L\cup\{v\}}$, the symbol $|w(v)|$ denotes the length of $w(v)$. Also, if $l\in L\cup\{v\}$,  let $w(l)$ be the word in $W_{L}\cup W_{L\cup\{v\}}$ obtained by replacing every occurrence of $v$ in $w(v)$ by $l$. For an infinite sequence of words $(w_n(v))_{n<\omega}$, let $$[(w_n(v))_{n<\omega}]_{L\cup\{v\}}=\{w_{n_0}(l_0)w_{n_1}(l_1)\dots w_{n_q}(l_q)\in W_{L\cup\{v\}} : n_1<\dots<n_q; l_i\in L\cup\{v\}\ (i\leq q)\}.$$

 \noindent An infinite sequence $(w_n(v))_{n<\omega}$, with $w_n(v)\in W_{L\cup\{v\}}$, is \textbf{rapidly increasing} if for every $n<\omega$, $|w_n(v)|>\sum_{i<n}|w_i(v)|$. 

\begin{thm}[Infinite Hales-Jewett Theorem. See \cite{Todo}, Theorem 4.21]\label{infinite HJ}
Let $L$ be a finite alphabet and let $v$ be a symbol which is not in $L$. Let $(w_n(v))_{n<\omega}$ be an infinite rapidly increasing sequence of variable words. Then for every finite coloring of $W_{L\cup\{v\}}$ there exists an infinite rapidly increasing sequence $(u_j(v))_{j<\omega}$ with $u_j(v)\in [(w_n(v))_{n<\omega}]_{L\cup\{v\}}$, $j<\omega$, such that $[(u_j(v))_{j<\omega}]_{L\cup\{v\}}$ is monochromatic.
\end{thm}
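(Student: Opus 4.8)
The plan is to derive Theorem \ref{infinite HJ} from the finite Hales--Jewett theorem by the Galvin--Glazer idempotent ultrafilter method, the same machinery that yields Hindman's theorem from the finite-sum pigeonhole. The guiding idea is that the two ways of building elements of $[(w_n(v))_{n<\om}]_{L\cup\{v\}}$ — concatenation (which produces the cases $l_i=v$) and substitution of a constant letter $\lambda\in L$ for $v$ — should both be absorbed into a single idempotent ultrafilter, after which an iterated self-refinement extracts the desired rapidly increasing sequence $(u_j(v))_{j<\om}$ all of whose combinatorial extensions receive one color. Note that the statement is genuinely infinitary and does not follow from the finite theorem by compactness alone, exactly as Hindman's theorem does not follow from Folkman's; this is why an ultrafilter (or equivalent fusion) argument is needed.

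First I would organize the combinatorial subwords into a partial semigroup. Set $S=[(w_n(v))_{n<\om}]_{L\cup\{v\}}$ and define a partial concatenation $u\frown u'$ to be declared precisely when the largest block index $w_n$ used in writing $u$ is strictly below the smallest one used in $u'$. The rapid-increase hypothesis $|w_n(v)|>\sum_{i<n}|w_i(v)|$ guarantees that each element of $S$ has a unique decomposition into blocks $w_{n_i}(l_i)$, so this block support is well defined and $\frown$ is associative where defined. This makes $(S,\frown)$ an adequate partial semigroup, and the associated subspace $\gamma S\subseteq\beta S$ of ultrafilters concentrating, for each $u$, on words whose block support lies above that of $u$, is a compact right-topological semigroup (see \cite{Todo}); by Ellis's theorem it contains an idempotent ultrafilter $\mathcal U$.

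The heart of the argument is to couple $\mathcal U$ with the substitution maps. For each $\lambda\in L\cup\{v\}$ the substitution $\sigma_\lambda\colon w(v)\mapsto w(\lambda)$ is defined on $S$, with $\sigma_v$ the identity, and the key one-step lemma is: for every $A\in\mathcal U$ there is a single $u\in S$ such that the words $\sigma_\lambda(u)$, as $\lambda$ ranges over all of $L\cup\{v\}$, can be simultaneously steered into prescribed $\mathcal U$-large sets. This simultaneous realization of all $|L|+1$ substitutions by one variable word is exactly what the finite Hales--Jewett theorem provides, once it is applied to a sufficiently long run $w_p,w_{p+1},\dots,w_{p+N}$ of the given blocks — a naive pigeonhole would control only one substitution at a time, and it is here that the genuine content of Hales--Jewett enters. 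Given a finite coloring $c$ of $W_{L\cup\{v\}}$, one color class $C$ lies in $\mathcal U$; replacing $C$ by its idempotent refinement $C^{\star}=\{x\in C: x^{-1}C\in\mathcal U\}\in\mathcal U$ and iterating the one-step lemma in Galvin--Glazer fashion, I would recursively extract $u_0,u_1,\dots$ with successive separated block supports (hence rapidly increasing after the standard grouping) so that every word $\sigma_{\lambda_0}(u_{j_0})\cdots\sigma_{\lambda_q}(u_{j_q})$ containing at least one $v$ lands in $C$. This is precisely the assertion that $[(u_j(v))_{j<\om}]_{L\cup\{v\}}\subseteq C$.

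I expect the main obstacle to be the coordination of the concatenation structure with the substitution structure inside one ultrafilter: substituting a constant letter carries a variable word out of $W_{L\cup\{v\}}$ into $W_L$, so the large-set conditions must be set up to track variable and constant words together, and the recursion must preserve monochromaticity under all substitutions at once while keeping each $u_j$ a combinatorial subword of the original sequence and the whole sequence rapidly increasing. Managing this bookkeeping — arranging that the one-step lemma returns a single $u$ good for every $\lambda$ simultaneously, and that the idempotent refinements stay compatible with the partial, block-separated nature of $\frown$ — is where the real work lies; the finite Hales--Jewett theorem and the existence and self-refinement properties of the idempotent $\mathcal U$ are the two external inputs that drive the proof.
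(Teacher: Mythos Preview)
The paper does not prove this theorem; it is quoted as a known result (Theorem 4.21 of \cite{Todo}) and used as a black box in the proof of Theorem \ref{pigeon hole}. So there is no in-paper proof to compare your proposal against.

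On the substance of your sketch: the idempotent-ultrafilter route is indeed the one taken in \cite{Todo}, and your instinct that the finite Hales--Jewett theorem must enter to synchronize all substitutions $\sigma_\lambda$, $\lambda\in L\cup\{v\}$, at each stage is exactly right. However, as written your plan has a real gap at the point you yourself flag. You build the idempotent $\mathcal U$ on the partial semigroup $S$ of \emph{variable} words, and then ask that $\sigma_\lambda(u)$ be ``steered into prescribed $\mathcal U$-large sets'' for every $\lambda\in L\cup\{v\}$; but for $\lambda\in L$ the word $\sigma_\lambda(u)$ is a constant word, hence not in $S$ at all, and speaking of $\mathcal U$-largeness for it is meaningless. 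The standard resolution (as in \cite{Todo} or the Bergelson--Blass--Hindman treatment) is to work simultaneously with an idempotent on constant words and a compatible minimal idempotent on variable words, or equivalently to set up the ultrafilter on $W_L\cup W_{L\cup\{v\}}$ with the right absorption relations, so that the Galvin--Glazer iteration can legitimately track both kinds of words. Until you specify that two-sorted structure and the compatibility condition between the two idempotents, the recursion you describe cannot be carried out: the set $C^\star$ you refine to lives in the wrong universe to control the constant substitutions. Once that is set up correctly, the finite Hales--Jewett step and the rapidly-increasing bookkeeping go through as you outline.
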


From Theorem \ref{infinite HJ} we can prove the following.

\begin{thm}\label{pigeon hole}
Let $A\in \mathcal S_t^{<}\binom{\omega,m}{\omega}$ be given. For every finite coloring of $[A]$ there exists $B\in\mathcal S_t^{<}\binom{\omega,m}{\omega}$ such that $B\leq A$ and $[B]$ is monochromatic.
\end{thm}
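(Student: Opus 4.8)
\textbf{Proof plan for Theorem \ref{pigeon hole}.}

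The plan is to reduce a finite coloring of $[A]$ to a finite coloring of the variable words $W_{L\cup\{v\}}$ over a suitable finite alphabet $L$, apply the Infinite Hales-Jewett Theorem (Theorem \ref{infinite HJ}), and then translate the resulting monochromatic ``substructure'' of variable words back into a condition $B\leq A$ with $[B]$ monochromatic. The first step is to fix a coding. Write $A=\{a_0,a_1,\dots\}$ with $a_n=A^{-1}(\{n\})$, and recall that each symbol $S^{l_1}(a_{n_1})+\dots+S^{l_q}(a_{n_q})\in[A]$ is determined by choosing, for the blocks $a_n$ with $n\geq t$ taken in increasing order, a ``letter'' $l_i\in\{0,1,\dots,t+m-1\}$ (where the values $<t$ mean ``merge into an old column,'' $t$ plays the role of the variable $v$, and values in $(t,t+m)$ are iterated tetris images), subject to the constraint that the letter $t$ is used at least once and only finitely many blocks receive a letter other than (a placeholder meaning) ``not used yet.'' After normalizing, this makes $[A]$ into (essentially) the set of variable words $W_{L\cup\{v\}}$ over the finite alphabet $L=\{0,1,\dots,t+m-1\}\setminus\{t\}$ with distinguished variable symbol $v$ corresponding to the letter $t$, where the $n$-th block of $A$ supplies the $n$-th letter slot; more precisely, one identifies an element of $[A]$ with a finite variable word recording which letter each block $a_{t},a_{t+1},\dots$ receives, reading only up to the last block that is used.

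The second step is to verify that the concatenation/composition structure is respected by this identification: an element $B\leq A$, i.e. $[B]\subseteq[A]$, corresponds exactly to a choice of a rapidly-increasing sequence of variable words $(u_j(v))_{j<\omega}$ drawn from $[(w_n(v))_{n<\omega}]_{L\cup\{v\}}$, where $w_n(v)$ is the trivial length-one variable word encoding the $n$-th block of $A$; and then $[B]$ is identified with $[(u_j(v))_{j<\omega}]_{L\cup\{v\}}$. Here the substitution operation $w(l)$ (replacing $v$ by $l\in L\cup\{v\}$) matches the operation on $[A]$ of applying a tetris power or a merge to a whole group of blocks that was previously grouped together under a single $v$-slot of $B$: this is where the definitions of $S^l(a_n)$ and of the representation of $f\in\mathcal S_t^{<}\binom{e,m}{1}|A$ given in the excerpt are used, essentially verbatim, to check that the combinatorial bookkeeping (which blocks merge, how the tetris exponents add, the requirement that some $l_i=t$) is exactly the bookkeeping of forming elements of $[(u_j(v))_{j<\omega}]_{L\cup\{v\}}$.

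Granting these two translation steps, the proof concludes quickly: a finite coloring of $[A]$ induces, via the identification, a finite coloring of $W_{L\cup\{v\}}$; Theorem \ref{infinite HJ} applied to the rapidly increasing sequence $(w_n(v))_{n<\omega}$ yields a rapidly increasing $(u_j(v))_{j<\omega}$ in $[(w_n(v))_{n<\omega}]_{L\cup\{v\}}$ with $[(u_j(v))_{j<\omega}]_{L\cup\{v\}}$ monochromatic; and unwinding the identification gives $B\in\mathcal S_t^{<}\binom{\omega,m}{\omega}$ with $B\leq A$ and $[B]$ monochromatic. I expect the main obstacle to be the bookkeeping in the second step — precisely matching $[B]$ with $[(u_j(v))_{j<\omega}]_{L\cup\{v\}}$, including handling the ``merge into column $l<t$'' letters (which behave slightly differently from the tetris letters, since they depend on the fixed first $t$ columns of $A$) and the boundary/finiteness conditions (the $\min\dom$ and $\max\dom$ conditions (5)--(6) in the definition of $\mathcal S_t^{<}\binom{\beta,m}{\alpha}$, and the ``at least one $l_i=t$'' condition) — rather than the invocation of Hales-Jewett itself, which is black-boxed.
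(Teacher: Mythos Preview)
Your overall strategy---code $[A]$ by variable words over the alphabet $L=(t+m)\setminus\{t\}$ with $v=t$, apply the Infinite Hales--Jewett Theorem, and decode---is exactly the paper's approach. But there is a genuine gap in your choice of $(w_n(v))_{n<\omega}$.

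You take each $w_n(v)$ to be the length-one word $v$, and then invoke Theorem~\ref{infinite HJ} for ``the rapidly increasing sequence $(w_n(v))_{n<\omega}$.'' A constant sequence of length-one words is \emph{not} rapidly increasing: already $|w_1(v)|=1\not>|w_0(v)|=1$. So the hypothesis of Theorem~\ref{infinite HJ} fails. More seriously, without rapid increase the decomposition of a word in $[(w_n(v))_n]_{L\cup\{v\}}$ as $w_{n_0}(l_0)\cdots w_{n_q}(l_q)$ is not unique, so the coloring you want to induce on $W_{L\cup\{v\}}$ from $c$ on $[A]$ is not well defined: a word $l_0l_1\cdots l_q$ could correspond to $S^{l_0}(a_{n_0})+\cdots+S^{l_q}(a_{n_q})$ for \emph{any} increasing tuple $n_0<\cdots<n_q$ of indices $\geq t$, and these generally have different $c$-colors. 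Your ``placeholder meaning `not used yet'\,'' hints at the problem but does not resolve it, since no such letter is in $L$.

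The paper's fix is small but essential: take $w_n(v)=v^{2^n}$. This sequence \emph{is} rapidly increasing, so every word in $[(w_n(v))_n]_{L\cup\{v\}}$ has a unique parse, and the induced coloring $c'(w_{n_0}(l_0)\cdots w_{n_q}(l_q))=c(S^{l_0}(a_{n_0})+\cdots+S^{l_q}(a_{n_q}))$ is well defined. Since substituting a single letter $l$ into $w_n(v)=v^{2^n}$ just yields the constant word $l^{2^n}$, the decoding back to $[A]$ is unaffected by the padding. With this one change your plan goes through, and the ``bookkeeping'' you anticipated in the second step becomes the straightforward verification the paper gives.
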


\begin{proof}

Let us write, $A = \{a_0,a_1,\dots \}$, where $a_n=A^{-1}(\{n\})$. Consider the alphabet \linebreak$L=t+m\setminus\{t\}$, and let $v=t$. Fix a finite coloring $c$ of $[A]$. Also, let $(w_n(v))_n$ be the infinite rapidly increasing sequence of variable words such that for $n<\omega$ we have $w_n(v)=vv\dots v$, where $v$ appears exactly $2^n$ times. Now, define the coloring $c'$ on $[(w_n(v))_n]_{L\cup\{v\}}$ by 

$$c'(w_{n_0}(l_0)w_{n_1}(l_1)\dots w_{n_q}(l_q)) = c(S^{l_0}(a_{n_0}) + S^{l_1}(a_{n_1}) +\cdots + S^{l_q}(a_{n_q}))$$ 

\medskip

\noindent and let $c'(u)=0$, if $u\in W_{L\cup\{v\}}\setminus [(w_n(v))_n]_{L\cup\{v\}}$. 

Given $w_{n_0}(l_0)w_{n_1}(l_1)\dots w_{n_q}(l_q)\in [(w_n(v))_n]_{L\cup\{v\}}$, at least one $l_i$ must be equal to $v$ (that is, equal to $t$). Then $S^{l_0}(a_{n_0}) + S^{l_1}(a_{n_1}) +\cdots + S^{l_q}(a_{n_q})$ is in fact an element of $[A]$, and it is uniquely determined by $w_{n_1}(l_1)w_{n_2}(l_2)\dots w_{n_q}(l_q)$ because $(w_n(v))_{n<\omega}$ is rapidly increasing. Apply Theorem \ref{infinite HJ}, and let $(u_j(v))_{j<\omega}$, with  $u_j(v)\in [(w_n(v))_n]_{L\cup\{v\}}$, be an infinite rapidly increasing sequence of variable words such that $[(u_j(v))_j]_{L\cup\{v\}}$ is monochromatic for $c'$. Let us say that the constant color is $p$. Define $B= (b_0,b_1, \dots)$ in this way: For every $j<\omega$, if $$u_j(v)=w_{n_0}(l_0)w_{n_1}(l_1)\cdots w_{n_j}(l_{q_j})$$ then $$b_j=S^{l_0}(a_{n_0}) + S^{l_1}(a_{n_1})  +\cdots + S^{l_{q_j}}(a_{n_{q_j}}).$$ 

Again, here $b_j$ stands for $B^{-1}(\{j\})$. Thus, $[B]$ is monochromatic for $c$: If $b\in [B]$ and $b= S^{l_0}(b_{j_0}) + S^{l_1}(b_{j_1}) +\cdots + S^{l_s}(b_{j_s})$ with $b_{j_i}=S^{l_i^0}(a_{n_i^0}) + S^{l_i^1}(a_{n_i^1}) +\cdots + S^{l_i^{q_i}}(a_{n_i^{q_i}})$, $0\leq i\leq s$, then

$$c(b)=c(S^{l_0}(b_{j_0})  +\cdots + S^{l_s}(b_{j_s})) = c(S^{l_0}(\sum_{0\leq j\leq q_1}S^{l_0^j}(a_{n_0^j})) +\cdots + S^{l_s}(\sum_{0\leq j\leq q_s}S^{l_s^j}(a_{n_s^j})))$$ $$\ \ \ \ \ \ \ \ \ \ \ \ \ = c'((w_{n_0^0}(l_0^0)\dots w_{n_0^{q_1}}(l_0^{q_1}))(l_0)\dots (w_{n_s^0}(l_s^0)\dots w_{n_s^{q_s}}(l_0^{q_s}))(l_s))= p.$$
\end{proof}

\begin{proof}[Proof of Theorem \ref{general parameter space}]

The quasi-order $\leq$ defined on $\mathcal S_t^{<}\binom{\omega,m}{\omega}$ admits a natural finitization $\leq_{\fin}$ defined on $\mathcal S_t^{<}\binom{<\omega,m}{<\omega}$ as follows. Consider $a,b\in\mathcal S_t^{<}\binom{<\omega,m}{<\omega}$. Write $a= \{a_0, \dots, a_{n-1}\}$ and $b=\{b_0,\dots,b_{p-1}\}$, where $0<n,p<\omega$, $a_j=a^{-1}(\{j\})$ and $b_j=b^{-1}(\{j\})$. Let $[b]$ denote the collection of all the sets of the form $S^{l_1}(b_{n_1}) + S^{l_2}(b_{n_2}) +\cdots + S^{l_q}(b_{n_q})$ satisying that if $n_i\geq t$ for all $i\in\{1,\dots,q\}$, then at least one of the $l_i$'s is equal to $t$. Write $a\leq_{\fin} b$ if and only if $\dom(a)=\dom(b)$ and $a_j\in [b]$, $j<n$. It is easy to see that with these definitions, {\bf A.1}-{\bf A.3} are satisfied, and to prove that $\mathcal S_t^{<}\binom{\omega,m}{\omega}$ is metrically closed in $(\mathcal S_t^{<}\binom{<\omega,m}{<\omega})^{\mathbb N}$. Also, {\bf A.4} for $\mathcal S_t^{<}\binom{\omega,m}{\omega}$ follows easily from Theorem \ref{pigeon hole}. This completes the proof of Theorem \ref{general parameter space}.
\end{proof}

\subsection{Baire sets of generalized ascending parameter words are Ramsey.}

Given $k,m,t<\omega$ with $m>0$. In this Section we will prove that the Baire subsets of $\mathcal S_t^{<}\binom{\omega,m}{k}$ are Ramsey. Let $\pi: S_t^{<}\binom{\omega,m}{\omega}\rightarrow S_t^{<}\binom{\omega,m}{k}$ be defined as follows: 

\[\pi(A)(i,j) =\left\{ \begin{array}{rcl}
0\ \ \  & \mbox{if} & A(i,j)\geq t+k\\
& & \\
A(i,j) & \mbox{if} & 0\leq A(i,j)< t+k\\
\end{array}
\right. \]

Notice that $\pi$ is a surjection. For $n>k$, we will extend the function $\pi$ to $\mathcal{AR}_n=\mathcal S_t^{<}\binom{<\omega,m}{n}$ as follows. Given $a\in\mathcal{AR}_n$ and $(i,j)$ in the domain of $a$, let

\[\pi(a)(i,j) =\left\{ \begin{array}{rcl}
0\ \ \  & \mbox{if} & t+k\leq a(i,j)< t+n\\
& & \\
a(i,j) & \mbox{if} & 0\leq a(i,j)< t+k\\
\end{array}
\right. \]

Define the function $s$ on $\mathbb N\times\bigcup_k\mathcal S_t^{<}\binom{\omega,m}{k}$ as follows. Given $n<\omega$, $X\in\mathcal S_t^{<}\binom{\omega,m}{k}$ and any $A\in\mathcal S_t^{<}\binom{\omega,m}{\omega}$ such that $\pi(A)=X$, let 

\[s(n,X) =\left\{ \begin{array}{rcl}
r_n(A)\ \ \ \ \ \ \  & \mbox{if} & 0\leq n\leq k\\
\pi(r_n(A)) \ \ \ \ & \mbox{if} & n>k\\
\end{array}
\right. \]

\bigskip

For $A,B\in\mathcal S_t^{<}\binom{\omega,m}{\omega}$ and $X\in\mathcal S_t^{<}\binom{\omega,m}{k}$, $k<\omega$, define $A\circ B = A\cdot B$ and $A\circ X= A\cdot X$. We will prove that the structure $(\mathcal S_t^{<}\binom{\omega,m}{\omega}, \leq, r, (\mathcal S_t^{<}\binom{\omega,m}{k})_k,\circ,s)$ satisfies axioms ${\bf A.5- A.7}$. Therefore, by Theorem \ref{main thm}, we will obtain the following

\begin{thm}\label{block sequence ramsey} Let $k<\omega$ and $B\in\mathcal S_t^{<}\binom{\omega,m}{\omega}$ be given. For every finite Baire-measurable coloring of $\mathcal S_t^{<}\binom{\omega,m}{k}$, there exists $A\in\mathcal S_t^{<}\binom{\omega,m}{\omega}|B$ such that $\mathcal S_t^{<}\binom{\omega,m}{k}|A$ is monochromatic.
\end{thm}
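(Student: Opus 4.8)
\textbf{Proof proposal for Theorem \ref{block sequence ramsey}.}
The plan is to invoke Theorem \ref{main thm} directly, so the entire task reduces to verifying that the structure
$(\mathcal S_t^{<}\binom{\omega,m}{\omega}, \leq, r, (\mathcal S_t^{<}\binom{\omega,m}{k})_k,\circ,s)$ satisfies axioms \textbf{A.5}--\textbf{A.7}, and that each projected space $\mathcal S_t^{<}\binom{\omega,m}{k}$ is metrically closed in $(\mathcal S_t^{<}\binom{<\omega,m}{k})^{\mathbb N}$. (That $\mathcal S_t^{<}\binom{\omega,m}{\omega}$ is a topological Ramsey space, hence metrically closed in $\mathcal{AR}^{\mathbb N}$, is Theorem \ref{general parameter space}.) First I would record that $\circ$ is just the composition $\cdot$ of parameter words, whose associativity ((c), (d) of \textbf{A.5}) is an immediate computation from $(A\cdot B)(i,j)=B(A(i,j),m-1)$, that $A\cdot B$ and $A\cdot X$ land in the right spaces because composition preserves the defining conditions (1)--(6) while contracting the number of ``colors'' to that of the second factor ((a), (b)), and that (e) is exactly the observation already made right before the statement of Theorem \ref{general parameter space} that $B=A\cdot C$ implies $[B]\subseteq[A]$, i.e. $B\le A$.

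Next I would check \textbf{A.6}. Parts (c) and (d) say that $X\in\mathcal S_t^{<}\binom{\omega,m}{k}$ is determined by the sequence of its $s$-approximations $s(n,X)=\pi(r_n(A))$ for $n>k$ (and $s(n,X)=r_n(A)$ for $n\le k$), which holds because $\pi$ applied levelwise to the $r_n(A)$ recovers $X$ in the limit and the $r_n(A)$ themselves satisfy the corresponding property for $\mathcal R$ by \textbf{A.1}. Part (a) is the compatibility of $s(n,X)$ with $r_{n}$ on lower levels, which follows from the definition of $r$ together with the fact that $\pi$ commutes with restriction to an initial segment. Part (b), that $\depth_A s(n,A\circ X)$ is strictly increasing in $n$ and $s(n,A\circ X)\in \mathcal{AR}_k|A$, uses condition (6) of the definition of $\mathcal S_t^{<}$ (the blocks have strictly increasing supports) so that successive approximations genuinely see one more block of $A$. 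Then I would verify \textbf{A.7}: the finitization of $\circ$ on $\mathcal{AR}\binom{n}{m}|A\times\mathcal{AR}\binom{m}{k}|A$ is the restriction of the composition $\cdot$ to finite parameter words (this is exactly how $[A]$ was built as $f\in\mathcal S_t^{<}\binom{e,m}{1}|A$), and (a)--(c) amount to: composition is monotone and ``onto'' with respect to the end-extension order $<$ on $\mathcal{AR}_k$ (using part (a) of \textbf{A.7} the way it was used in Lemma \ref{lemma open}), composition is order-preserving in the first coordinate (condition (b)), and $\pi(r_n(A)\cdot a)=s(n,A\circ X)$ whenever $X\in\langle a\rangle$ — which is a bookkeeping identity between $\pi$, $r_n$ and $\cdot$.

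The metric closedness of $\mathcal S_t^{<}\binom{\omega,m}{k}$ in $(\mathcal S_t^{<}\binom{<\omega,m}{k})^{\mathbb N}$ is proved the same way as the corresponding statement for $\mathcal S_t^{<}\binom{\omega,m}{\omega}$ (asserted in the proof of Theorem \ref{general parameter space}): a $<$-increasing sequence $(a_n)_{n\ge k}$ determines levelwise a unique surjection $(t+\omega)\times m\to t+k$, and one checks it still satisfies (1)--(6) (the finiteness condition (3) and the support conditions (5), (6) being the points to watch), so the intersection $\bigcap_{n}\langle a_n\rangle$ is a singleton in $\mathcal S_t^{<}\binom{\omega,m}{k}$. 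Having all of \textbf{A.1}--\textbf{A.7} and both closedness hypotheses, Theorem \ref{main thm} applied to $B$, $k$ and the given finite Baire-measurable coloring of $\mathcal S_t^{<}\binom{\omega,m}{k}$ yields $A'\le B$ with $\{A'\circ X:X\in \mathcal S_t^{<}\binom{\omega,m}{k}\}$ monochromatic; taking $A=A'\circ B$ (so $A\le B$) and using Facts 2 and 3 exactly as at the end of the proof of Theorem \ref{main thm}, we get that the coloring is constant on $\mathcal S_t^{<}\binom{\omega,m}{k}|A$, as required. I expect the main obstacle to be the careful verification of part (b) of \textbf{A.6} and parts (a), (c) of \textbf{A.7}: these are where the interplay between the tetris operation hidden inside $S^l$, the projection $\pi$, and the composition $\cdot$ must be reconciled with the end-extension order $<$ on the $\mathcal{AR}_k$, and it is easy to get the indexing of blocks and the ``collapsed'' colors $\ge t+k$ wrong; everything else is routine once the definitions are unwound.
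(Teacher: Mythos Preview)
Your proposal is correct and follows essentially the same route as the paper: verify that the structure satisfies \textbf{A.5}--\textbf{A.7} (the paper does this as a Claim, singling out \textbf{A.5}(c), \textbf{A.6}(b), and \textbf{A.7}(c) for explicit computation, just as you flag these as the delicate points), then invoke Theorem~\ref{main thm}. Your final maneuver of replacing $A'$ by $A=A'\circ B$ is unnecessary---the $A'\le B$ produced by Theorem~\ref{main thm} already satisfies $\mathcal S_t^{<}\binom{\omega,m}{k}|A'=\{A'\circ X:X\in\mathcal S_t^{<}\binom{\omega,m}{k}\}$ monochromatic by definition---but it does no harm; and you are slightly more careful than the paper in explicitly noting that metric closedness of $\mathcal S_t^{<}\binom{\omega,m}{k}$ must be checked.
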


\begin{claim}
$(\mathcal S_t^{<}\binom{\omega,m}{\omega}, \leq, r, (\mathcal S_t^{<}\binom{\omega,m}{k})_k,\circ,s)$ satisfies axioms ${\bf A.5- A.7}$.
\end{claim}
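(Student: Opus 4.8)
The goal is to verify axioms \textbf{A.5}--\textbf{A.7} for the structure $(\mathcal S_t^{<}\binom{\omega,m}{\omega}, \leq, r, (\mathcal S_t^{<}\binom{\omega,m}{k})_k,\circ,s)$, where $\circ$ is the composition $\cdot$ of parameter words and $s$ is the $\pi$-image of the restriction maps $r_n$. My plan is to treat the three axioms in order, since each relies mostly on unwinding the definitions of the composition $A\cdot B$, the tetris operation, and the projection $\pi$.

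For \textbf{A.5}, parts (a), (b) are the statements that $A\cdot B\in\mathcal S_t^{<}\binom{\omega,m}{\omega}$ when $A,B\in\mathcal S_t^{<}\binom{\omega,m}{\omega}$ and that $A\cdot X\in\mathcal S_t^{<}\binom{\omega,m}{k}$ when $X\in\mathcal S_t^{<}\binom{\omega,m}{k}$; these are immediate from the definition of the composition, once one checks that conditions (1)--(6) defining $\mathcal S_t^{<}$ are preserved (the key points are that $(A\cdot B)(i,l)=i$ for $i<t$ since both $A$ and $B$ fix the first $t$ coordinates, and that the finiteness and monotonicity of the fibers $\dom (A\cdot B)^{-1}(\{t+j\})$ follow from the corresponding properties of $B$). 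Associativity in parts (c), (d) is a direct computation: $(A\cdot(B\cdot C))(i,j) = (B\cdot C)(A(i,j),m-1) = C(B(A(i,j),m-1),m-1)$, and $((A\cdot B)\cdot C)(i,j) = C((A\cdot B)(i,j),m-1) = C(B(A(i,j),m-1),m-1)$, and these agree; the same computation works with $C$ replaced by $X\in\mathcal R_k$. Part (e) is exactly the remark already made in the subsection ``The quasi-order,'' namely that $B = A\cdot C$ implies $[B]\subseteq[A]$, hence $B\leq A$.

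For \textbf{A.6}, I would verify the four clauses using the definition of $s(n,X)$ as $r_n(A)$ for $n\le k$ and $\pi(r_n(A))$ for $n>k$, where $\pi(A)=X$. Clause (a) is the consistency requirement: for $n\ge k$, $s(n,X)\in\mathcal{AR}_k = \mathcal S_t^{<}\binom{<\omega,m}{k}$ because $\pi$ collapses all labels $\geq t+k$ to $0$, leaving exactly $k$ nonzero new labels; for $n<k$, $s(n,X)=r_n(s(k,X))$ holds because the restriction maps of the original space are coherent (axiom \textbf{A.1}(c)) and $s(k,X)=r_k(A)$; the clause for $r_{k-1}(a)=s(k-1,X)$ follows the same way. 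Clause (b): $s(n,A\circ X) = s(n,A\cdot X)$ lies in $\mathcal{AR}_k|A$ because $A\cdot X\leq A$ by part (e), and the strict depth increase $\depth_A s(n,A\circ X) < \depth_A s(n+1,A\circ X)$ follows from condition (6) (strict ordering of the domains of successive fibers) together with the definition of $r$, which introduces exactly one new block at each step. Clauses (c) and (d) are the injectivity/coherence of the $s$-approximation sequence, and follow from \textbf{A.1}(b),(c) applied to $A$ and the fact that $\pi$ applied coordinatewise does not create collisions among approximations of a fixed $X$ (distinct restrictions of $A$ project to distinct restrictions because the new-label structure is recorded).

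For \textbf{A.7}, the finitization of $\circ$ to a map $\mathcal{AR}\binom{n}{m}|A\times\mathcal{AR}\binom{m}{k}|A\to\mathcal{AR}\binom{n}{k}|A$ is defined by the same formula $(b\circ a)(i,j)=a(b(i,j),m-1)$ restricted to the appropriate finite domains; I would first check it is well-defined and onto (surjectivity comes from the surjectivity of $\pi$ on the finite level together with the fact that every finite parameter word of the right shape extends to an element of $\mathcal S_t^{<}\binom{\omega,m}{\omega}$). Part (c) is the compatibility with $s$: if $X\in\langle a\rangle$ and $n>k$, then picking $A'$ with $\pi(A')=X$ and $r_n(A')=a'$ projecting to $a$, one computes $s(n,A\circ X)=\pi(r_n(A\cdot A')) = r_n(A)\cdot a$ (using that composition commutes with restriction at stage $n$), while for $n=k$ one gets $s(k,A\circ X)=r_k(A\cdot A')=a$ directly. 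Parts (a) and (b) are the ``end-extension/monotonicity'' properties of the finitized composition: (b) says $b<c$ implies $b\circ a < c\circ a$, which is clear because extending $b$ to $c$ by appending blocks causes $b\circ a$ to be extended by the $a$-images of those blocks; (a) says that any one-step $\circ a$-extension of $b\circ a$ is realized by a $\circ a$-image of some one-step extension $b'$ of $b$, which requires an inverse-image argument showing that the new block added on the $\mathcal{AR}_k$ side can be ``lifted'' through $a$ to a new block on the $\mathcal{AR}_n$ side.

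\textbf{Main obstacle.} The routine associativity and functoriality checks are mechanical, so the real work is in \textbf{A.7}(a): producing the lift $b'$ of $b$ with $c = b'\circ a$. The difficulty is that the finitized composition $\,\cdot\, a$ is not injective (many blocks of $b'$ can be merged by $a$), so one must argue that the particular new block witnessing $b\circ a < c$ has a preimage under $a$ that can be adjoined to $b$ while respecting conditions (1)--(6) — in particular the strict $\min\dom$ and $\max\dom$ orderings (5)--(6). I expect this to come down to a careful but not deep bookkeeping argument: the new fiber on the $\mathcal{AR}_k$-side of $c$ is, by definition of $a\in\mathcal{AR}\binom{m+1}{k}|A$ versus the larger block structure, assembled from a consecutive run of fibers of $A$ between $\depth_A(b)$ and some later stage, and lifting simply means not collapsing that run under $a$; one takes $b'$ to be $b$ together with exactly those fibers of $A$, suitably tetris-shifted so that $a$ sends them to the prescribed new $\mathcal{AR}_k$-block. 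Once this lifting is set up correctly, (b) and (c) fall out of the same description.
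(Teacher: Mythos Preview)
Your proposal is correct and takes essentially the same approach as the paper's own proof: both treat the claim as a routine verification from the definitions of $\cdot$, $\pi$, $r$, and $s$, with the only items singled out for explicit computation being the associativity in \textbf{A.5}(c), the depth monotonicity in \textbf{A.6}(b), and the compatibility in \textbf{A.7}(c). The paper is in fact terser than you---it dismisses \textbf{A.7}(a) as ``an easy extension of functions'' where you (rightly) flag it as the one place needing a genuine lifting argument---and for \textbf{A.7}(c) it builds the witness $B$ with $\pi(B)=A\circ X$ directly by a piecewise formula rather than through your auxiliary $A'$ with $\pi(A')=X$, but the two constructions are interchangeable.
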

\begin{proof}
This follows from the definitions. For instance, to show part (c) of ${\bf A.5}$ notice that $$(A\circ B)\circ C\ (i,j)$$ $$\ \ \ \ = C((A\circ B)(i,j),m)$$ $$\ \ \ \ \ \ \ =C((B(A(i,j),m)),m)$$ $$\ = B\circ C(A(i,j),m)$$ $$\ \ =A\circ (B\circ C)\ (i,j) .$$

And in order to prove part (b) of ${\bf A.6}$, for instance, notice that if $A\in\mathcal S_t^{<}\binom{\omega,m}{\omega}$ and $X,Y\in\mathcal S_t^{<}\binom{\omega,m}{k}$ are such that $Y=A\circ X$ then there exists $B\in\mathcal S_t^{<}\binom{\omega,m}{\omega}$ such that $B\leq A$ and $Y=\pi(B)$. Then, from the definition of $s$ and $\pi$ we conclude that $\depth_Bs(n,Y)<\depth_Bs(n+1,Y)$, for all $n<\omega$. Therefore, $\depth_As(n,Y)<\depth_As(n+1,Y)$, for all $n<\omega$.

On the other hand, the definition of the operation $A\circ B =A\cdot B$ was done on $\mathcal S_t^{<}\binom{\beta,m}{\alpha}$, for all $\alpha\leq\beta\leq\omega$. Thus, the finitization asked for in ${\bf A.7}$ was defined at the same time. Parts (a) of ${\bf A.7}$ follow from the definition of the operation $\circ$ by an easy extension of functions. Part (b) is straight forward. Let us prove part (c). Let $A\in \mathcal S_t^{<}\binom{\omega,m}{\omega}$, $a\in \mathcal S_t^{<}\binom{<\omega,m}{k}|A$ and $X\in \mathcal S_t^{<}\binom{\omega,m}{k}|A$ be given, with $\depth_A(a)=n$ and $X\in\ \langle a \rangle$. Define $B\in \mathcal S_t^{<}\binom{\omega,m}{\omega}$ by

\[B(i,j) =\left\{ \begin{array}{rcl}
(A\circ X)(i,j) & \mbox{if} & A(i,j)<t+n\\
A(i,j) \ \ \ \  & \mbox{if} & A(i,j)\geq t+n\\
\end{array}
\right. \]

\noindent Then $B\leq A$ and $\pi(B)=A\circ X$. Notice that 
$$s(n,A\circ X)=r_k(B)=B\upharpoonright \min\ \supp B^{-1}(\{t+k\})= A\circ X\upharpoonright \min\ \supp A^{-1}(\{t+n\})=r_n(A)\circ\ a.$$ 

This completes the proof of the Claim and of Theorem \ref{block sequence ramsey}.
\end{proof}

\begin{rem} The case $t=1$ of Theorem \ref{block sequence ramsey} is just the corresponding version of the infinite Ramsey Theorem \cite{Ramsey} for the topological Ramsey space $\FIN_m^{[\infty]}$ proved by Todorcevic \cite{Todo}. Off course, in the case $t=1$, Theorem \ref{block sequence ramsey} holds for all finite colorings (that is, it is not necessary to restrict to Baire-measurable colorings). But for $t>1$, using the Axiom of Choice, it is possible to define a not Baire-measurable finite coloring of $\mathcal S_t^{<}\binom{\omega,m}{k}$ with no monochromatic set of the form $\mathcal S_t^{<}\binom{\omega,m}{k}|A$.
\end{rem}

\section{Classical examples}\label{classical}
Throughout this Section, we will explorer other examples which fit the abstract setting introduced in this Section \ref{setting}. These classical examples were originally introduced in \cite{CarSim2,PromVoi,Todo,Voigt} and motivated this research. 

\subsection{Parameter words}\label{parameter}

Given $t<\omega$ and ordinals $\alpha\leq\beta\leq\omega$, let $\mathcal S_t\binom{\beta}{\alpha}$ denote the set of all surjective functions $A: t+\beta \rightarrow t+\alpha$ satisfying

\begin{enumerate}
\item $A(i)=i$ for every $i<t$.
\item $min\ A^{-1}(\{i\})< min\ A^{-1}(\{j\})$ for all $i<j<t+\alpha$.
\end{enumerate}

For $A\in\mathcal S_t\binom{\gamma}{\beta}$  and $B\in\mathcal S_t\binom{\beta}{\alpha}$, the composite $A\cdot B\in\mathcal S_t\binom{\gamma}{\alpha}$ is defined by $(A\cdot B)(i) = B(A(i))$.

\medskip

Fix $t<\omega$. Let $\mathcal R = \mathcal S_t\binom{\omega}{\omega}$ and for every positive $k<\omega$, let $\mathcal R_k =\mathcal S_t\binom{\omega}{k}$. Define the operation $\circ : \mathcal R\times (\mathcal R\cup\bigcup_k\mathcal R_k) \rightarrow (\mathcal R\cup\bigcup_k\mathcal R_k)$ as $A\circ B = A\cdot B$. For $A,B\in\mathcal R$, write $A\leq B$ whenever there exists $C\in\mathcal R$ such that $A=B\circ C$. 

At this point, it is useful to understand $ \mathcal S_t\binom{\omega}{\omega}$ as the set of equivalence relations on $t+\omega$ with infinitely many equivalence classes such that for each $A\in \mathcal S_t\binom{\omega}{\omega}$, the restriction $A\upharpoonright t$ is the identity relation on $t=\{0,1,\dots,t-1\}$ (when $t>0$).  Define the function $r:\mathbb N\times\mathcal R \rightarrow \mathcal{AR}$ as: 

\[r(n,A) =\left\{ \begin{array}{rcl}
\emptyset\ \ \ \ \ \ \ \ \ \ \ \ \ \ \ \ \ \ \ \  & \mbox{if} & n= 0\\
& & \\
A\upharpoonright \min\ A^{-1}(\{t+n\}) & \mbox{if} & n> 0\\
\end{array}
\right. \]

The definition of $\mathcal S_t\binom{\beta}{\alpha}$, for $t<\omega$ and ordinals $\alpha\leq\beta\leq\omega$, was taken from \cite{PromVoi}. But the proof of the following is due to Carlson and Simpson \cite{CarSim}:

\begin{thm}[Carlson-Simpson \cite{CarSim}]\label{parameter space} $(\mathcal S_t\binom{\omega}{\omega}, \leq, r)$ is a topological Ramsey space.
\end{thm}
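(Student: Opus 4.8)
The plan is to verify that the triple $(\mathcal S_t\binom{\omega}{\omega}, \leq, r)$ satisfies axioms {\bf A.1}--{\bf A.4} and is closed as a subspace of $(\mathcal{AR})^{\mathbb N}$, and then invoke the Abstract Ellentuck Theorem (Theorem \ref{thm.AET}). This is the same strategy used for Theorem \ref{general parameter space}, and indeed the present statement is the ``$m=1$, no tetris'' specialization of that argument; but since the authors attribute the result to Carlson--Simpson, I would present the verification in a self-contained way rather than deducing it from the earlier theorem.

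First I would set up the combinatorial bookkeeping. Identify each $A\in\mathcal S_t\binom{\omega}{\omega}$ with the equivalence relation on $t+\omega$ whose classes are $A^{-1}(\{i\})$, $i<t+\omega$; conditions (1) and (2) in the definition of $\mathcal S_t\binom{\beta}{\alpha}$ say that the first $t$ classes are the singletons $\{0\},\dots,\{t-1\}$ and that the classes are listed in increasing order of their least elements. The approximations $r(n,A)=A\re\min A^{-1}(\{t+n\})$ are then precisely the finite parameter words in $\mathcal S_t\binom{<\omega}{<\omega}=\bigcup_{l\le n}\mathcal S_t\binom{n}{l}$. Define $\le_{\fin}$ on $\mathcal{AR}$ by declaring $a\le_{\fin} b$ iff $\dom(a)=\dom(b)$ and every class of $a$ is a union of classes of $b$ (equivalently, $a$ coarsens $b$ on a common domain). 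With these definitions {\bf A.1} is immediate from the fact that $r(n,A)$ records the first $n$ nontrivial classes together with everything below the start of class $t+n$, so it determines all shorter approximations and two distinct equivalence relations must first differ at some finite stage. For {\bf A.2}(a), the set of $a\le_{\fin} b$ is finite because $\dom(a)$ is fixed and there are only finitely many coarsenings of $b$; {\bf A.2}(b) unwinds the definition of $\le$ in terms of restrictions; and {\bf A.2}(c), the amalgamation-type condition, follows by taking $d$ to be the appropriate initial restriction of $c$ — if $a\sqsubset b$ and $b$ coarsens to sit inside $c$, then the part of $c$ lying below the relevant cutoff coarsens to $a$.

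Next I would check {\bf A.3} and {\bf A.4}. For {\bf A.3}, given $a$ with $\depth_B(a)=n<\infty$ and $A\in[r_n(B),B]$, one constructs a member of $[a,A]$ by ``pasting'': refine $A$ above the support of $a$ so that its first approximations agree with $a$; the monotone-ordering conditions guarantee this yields a legitimate element of $\mathcal S_t\binom{\omega}{\omega}$ below $A$, and part (b) is the analogous statement with the extra demand that the resulting basic set be contained in a prescribed $[a,A]$. The pigeonhole axiom {\bf A.4} is the substantive point: one must $2$-color $\mathcal{AR}_{|a|+1}$ and find $A\in[\depth_B(a),B]$ making $r_{|a|+1}[a,A]$ monochromatic. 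Here I would invoke the Hales--Jewett theorem exactly as in the proof of Theorem \ref{pigeon hole} (with $m=1$ and no tetris operation the relevant words are over the alphabet $t$ with the single variable $v=t$, and the composition of parameter words corresponds to concatenation of variable words), obtaining from the Infinite Hales--Jewett Theorem a block subsequence on which the induced coloring of extensions is constant; this directly yields the required $A$. Finally, closedness of $\mathcal S_t\binom{\omega}{\omega}$ in $(\mathcal{AR})^{\mathbb N}$ follows because a sequence $(a_n)$ with $a_n\sqsubset a_{n+1}$ and $a_n\in\mathcal{AR}_n$ has a well-defined union which is again an equivalence relation satisfying (1) and (2), hence a member of the space. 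Having verified {\bf A.1}--{\bf A.4} and closedness, Theorem \ref{thm.AET} gives that $(\mathcal S_t\binom{\omega}{\omega},\le,r)$ is a topological Ramsey space.

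The main obstacle is {\bf A.4}: unlike the other axioms, it is not a matter of unwinding definitions but requires a genuine Ramsey-theoretic input, and getting the reduction to Hales--Jewett exactly right — matching the composition operation $A\cdot B$ on parameter words to concatenation of variable words and ensuring the rapidly increasing condition is maintained — is where all the care goes. Everything else ({\bf A.1}--{\bf A.3}, closedness) is routine structural verification about finite equivalence relations with ordered classes.
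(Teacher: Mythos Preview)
There is a genuine gap in your proposal: you have conflated $\mathcal S_t\binom{\omega}{\omega}$ with $\mathcal S_t^{<}\binom{\omega}{\omega}$. The present theorem concerns the \emph{general} parameter words, where the classes $A^{-1}(\{t+j\})$ need only have increasing minima; they may interleave arbitrarily and need not be finite. The space treated in Theorem~\ref{general parameter space} is the \emph{ascending} variant, where in addition $\max A^{-1}(\{t+i\})<\min A^{-1}(\{t+j\})$ for $i<j$, so the nontrivial classes form a genuine block sequence. Your remark that ``the present statement is the `$m=1$, no tetris' specialization'' of Theorem~\ref{general parameter space} is therefore incorrect: that specialization is Theorem~\ref{ascending space}, not Theorem~\ref{parameter space}.

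This matters precisely at {\bf A.4}. The reduction in Theorem~\ref{pigeon hole} encodes each class as a contiguous variable word and identifies composition with concatenation; that encoding depends essentially on the block condition (6) in the definition of $\mathcal S_t^{<}\binom{\beta,m}{\alpha}$. For general parameter words the classes interleave, composition does not correspond to concatenation of variable words, and the Infinite Hales--Jewett Theorem does not yield the pigeonhole principle. What is actually needed for {\bf A.4} here is the infinite Graham--Rothschild/Carlson--Simpson theorem for $1$-parameter words --- exactly what the paper's remark following the statement points to (case $k=1$ of Theorem~A in \cite{PromVoi}). The paper itself gives no proof of Theorem~\ref{parameter space}; it cites \cite{CarSim} for the full result and isolates this reference for {\bf A.4}. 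Your overall scaffold (verify {\bf A.1}--{\bf A.4} and closedness, then invoke Theorem~\ref{thm.AET}) is the right one, and your treatment of {\bf A.1}--{\bf A.3} and closedness is fine, but the proposed input for {\bf A.4} must be replaced by the Carlson--Simpson/Graham--Rothschild result rather than Hales--Jewett.
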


\begin{rem}
Axiom {\bf A.4} for the space $(\mathcal S_t\binom{\omega}{\omega}, \leq, r)$ follows from the infinite version of Ramsey's theorem for parameter words due to Graham and Rothschild  \cite{gralero} (case $k=1$ of  Theorem A in \cite{PromVoi}, page 191).
\end{rem}

\medskip

Similarly, $S_t\binom{\omega}{k}$ can be understood as the set of equivalence relations on $t+\omega$ with exactly $k$ classes disjoint from $t$ such that for each $X\in \mathcal S_t\binom{\omega}{k}$, the restriction $X\upharpoonright t$ is the identity relation on $t=\{0,1,\dots,t-1\}$ (when $t>0$). Let $\pi: S_t\binom{\omega}{\omega}\rightarrow S_t\binom{\omega}{k}$ be defined as follows: 

\[\pi(A)(i) =\left\{ \begin{array}{rcl}
0\ \ \  & \mbox{if} & A(i)\geq t+k\\
& & \\
A(i) & \mbox{if} & 0\leq A(i)< t+k\\
\end{array}
\right. \]

Notice that $\pi$ is a surjection. For $l>k$, we will extend the function $\pi$ to $\mathcal{AR}_l$ as follows. Given $a\in\mathcal{AR}_l$ and $i$ in the domain of $a$, let

\[\pi(a)(i) =\left\{ \begin{array}{rcl}
0\ \ \  & \mbox{if} & t+k\leq a(i)< t+l\\
& & \\
a(i) & \mbox{if} & 0\leq a(i)< t+k\\
\end{array}
\right. \]

Define $s : \mathbb N\times\bigcup_k\mathcal S_t\binom{\omega}{k} \rightarrow\bigcup_{i\leq k}\mathcal{AR}_i$ as follows. Given $n<\omega$, $X\in\mathcal S_t\binom{\omega}{k}$ and any $A\in\mathcal S_t\binom{\omega}{\omega}$ such that $\pi(A)=X$, let 

\[s(n,X) =\left\{ \begin{array}{rcl}
r_n(A)\ \ \ \ \ \ \  & \mbox{if} & 0\leq n\leq k\\
\pi(r_n(A)) \ \ \ \ & \mbox{if} & n>k\\
\end{array}
\right. \]
 
With these defnitions, the structure $(\mathcal S_t\binom{\omega}{\omega}, \leq, r, (\mathcal S_t\binom{\omega}{k})_k,\circ,s)$ satisfies axioms $\textbf{A.1}-\textbf{A.7}$ and $\mathcal S_t\binom{\omega}{\omega}$ is metrically closed. Thus we get the following from Theorem \ref{main thm}. 

\begin{thm}[Carlson-Simpson \cite{CarSim}]\label{borel parameter words ramsey} For every  $Y\in\mathcal S_t\binom{\omega}{\omega}$ and every finite Borel-measurable coloring of $\mathcal S_t\binom{\omega}{k}$, $k<\omega$, there exists $X\in\mathcal S_t\binom{\omega}{\omega}|Y$ such that $\mathcal S_t\binom{\omega}{k}|X$ is monochromatic.
\end{thm}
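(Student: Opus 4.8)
The statement to be proved is Theorem \ref{borel parameter words ramsey}, which asserts that for the Carlson--Simpson space $\mathcal S_t\binom{\omega}{\omega}$, every finite Borel-measurable coloring of the projected space $\mathcal S_t\binom{\omega}{k}$ admits a monochromatic set of the form $\mathcal S_t\binom{\omega}{k}|X$ below any given $Y$. The plan is to deduce this directly from the abstract machinery of Section \ref{setting}, exactly as announced in the sentence preceding the statement. Concretely, I would verify the hypotheses of Theorem \ref{main thm} for the structure $(\mathcal S_t\binom{\omega}{\omega}, \leq, r, (\mathcal S_t\binom{\omega}{k})_k, \circ, s)$ with the data defined in this subsection, and then observe that the conclusion of Theorem \ref{main thm} \emph{is} the conclusion sought here, modulo the translation between ``Baire-measurable in the Tychonoff topology on $(\mathcal{AR}_k)^{\mathbb N}$'' and ``Borel-measurable'' (these agree since $(\mathcal{AR}_k)^{\mathbb N}$ is a Polish space with $\mathcal{AR}_k$ discrete, so Borel $=$ Baire-measurable for finite colorings, and indeed any Borel coloring is Baire-measurable).

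The verification breaks into three blocks. First, axioms {\bf A.1}--{\bf A.4} together with metric closedness of $\mathcal S_t\binom{\omega}{\omega}$ in $\mathcal{AR}^{\mathbb N}$: this is precisely Theorem \ref{parameter space} (Carlson--Simpson), with {\bf A.4} coming from the infinite Graham--Rothschild theorem for parameter words as noted in the remark after that theorem, so nothing new is needed. Second, axioms {\bf A.5}--{\bf A.7} for the augmented structure: since $\circ$ is just the composition $\cdot$ of parameter words and $s$ is built from the projection $\pi$ and the finitization $r$, each clause is a routine unwinding of the definitions. The associativity clauses {\bf A.5}(c),(d) follow from $(A\cdot B)(i)=B(A(i))$ by the same one-line computation displayed for $\mathcal S_t^<\binom{\omega,m}{\omega}$ in the Claim of the previous section; clause {\bf A.5}(e) is the stated defining property of $\leq$; {\bf A.6} follows from the definition of $s$ and the depth-monotonicity of $\pi\circ r_n$ along a witness $A$ with $\pi(A)=X$; and {\bf A.7}(a)--(c) are extension-of-functions arguments identical in spirit to the $\mathcal S_t^<$ case, where {\bf A.7}(c) is checked by the same ``pick $B\leq A$ with $\pi(B)=A\circ X$ and read off $r_k(B)$'' computation. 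Third, metric closedness of each $\mathcal S_t\binom{\omega}{k}$ in $\mathcal{AR}_k^{\mathbb N}$: an increasing (in the $<$ ordering) sequence $(a_n)_{n\ge k}$ of finite parameter words with $k$ non-trivial classes determines, in the limit, a single surjection $t+\omega\to t+k$ satisfying conditions (1)--(2), hence a unique element of $\mathcal S_t\binom{\omega}{k}$.

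With all hypotheses of Theorem \ref{main thm} in place, that theorem yields, for every $Y\in\mathcal S_t\binom{\omega}{\omega}$ and every finite Baire-measurable coloring of $\mathcal S_t\binom{\omega}{k}$, some $A\leq Y$ with $\{A\circ X : X\in\mathcal S_t\binom{\omega}{k}\}$ monochromatic; and by Fact 2, $\{A\circ X : X\in\mathcal S_t\binom{\omega}{k}\}=\mathcal S_t\binom{\omega}{k}|A$, which is what Theorem \ref{borel parameter words ramsey} demands (writing $X$ for the output member of $\mathcal S_t\binom{\omega}{\omega}|Y$). I do not expect a genuine obstacle here: the content is entirely in Theorem \ref{main thm} and in Carlson--Simpson's Theorem \ref{parameter space}, and the present statement is a bookkeeping corollary. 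The one point requiring a word of care is the identification of the topology --- one should note explicitly that the Tychonoff topology on $(\mathcal{AR}_k)^{\mathbb N}$ coincides with the standard Polish topology on equivalence relations on $t+\omega$ with $k$ classes, so that ``Borel-measurable'' in the statement matches the ``Baire-measurable'' hypothesis of Theorem \ref{main thm} --- but this is immediate from Fact 1 and Remark \ref{projection spaces}.
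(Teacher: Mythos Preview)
Your proposal is correct and matches the paper's approach exactly: the paper simply asserts that with the given definitions the structure $(\mathcal S_t\binom{\omega}{\omega}, \leq, r, (\mathcal S_t\binom{\omega}{k})_k,\circ,s)$ satisfies {\bf A.1}--{\bf A.7} and is metrically closed, and then invokes Theorem \ref{main thm}. One tiny slip: the identity $\{A\circ X : X\in\mathcal S_t\binom{\omega}{k}\}=\mathcal S_t\binom{\omega}{k}|A$ is the \emph{definition} of $\mathcal R_k|A$ (the Notation after {\bf A.5}), not Fact 2.
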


\begin{thm}[Pr\"omel-Voigt \cite{PromVoi}]\label{parameter words ramsey} 
For every  $Y\in\mathcal S_t\binom{\omega}{\omega}$ and every finite Baire-measurable coloring of $\mathcal S_t\binom{\omega}{k}$, there exists $X\in\mathcal S_t\binom{\omega}{\omega}|Y$ such that $\mathcal S_t\binom{\omega}{k}|X$ is monochromatic.
\end{thm}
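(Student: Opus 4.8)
\textbf{Proof plan for Theorem \ref{parameter words ramsey}.}

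The plan is to recognize this theorem as an immediate consequence of the abstract machinery already developed, in complete parallel to the argument just given for the generalized ascending parameter words $\mathcal S_t^{<}\binom{\omega,m}{\omega}$ in Theorem \ref{block sequence ramsey}. So first I would invoke Theorem \ref{parameter space} (Carlson--Simpson), which tells us that $(\mathcal S_t\binom{\omega}{\omega},\leq,r)$ is a topological Ramsey space; in particular axioms \textbf{A.1}--\textbf{A.4} hold and, as noted in the surrounding text, $\mathcal S_t\binom{\omega}{\omega}$ is metrically closed in $\mathcal{AR}^{\mathbb N}$. Next I would check that with the operations $\circ$, the projection maps $\pi$, and the finitization function $s$ defined above, the expanded structure $(\mathcal S_t\binom{\omega}{\omega},\leq,r,(\mathcal S_t\binom{\omega}{k})_k,\circ,s)$ satisfies axioms \textbf{A.5}--\textbf{A.7}, and that each $\mathcal S_t\binom{\omega}{k}$ is metrically closed in $\mathcal{AR}_k^{\mathbb N}$. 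Once all hypotheses of Theorem \ref{main thm} are in place, the conclusion is exactly its output: for every $Y\in\mathcal S_t\binom{\omega}{\omega}$, every $k<\omega$, and every finite Baire-measurable coloring of $\mathcal S_t\binom{\omega}{k}$, there is $X\leq Y$ with $\mathcal S_t\binom{\omega}{k}|X = \{X\circ Z : Z\in\mathcal S_t\binom{\omega}{k}\}$ monochromatic.

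The verification of \textbf{A.5}--\textbf{A.7} is routine and mirrors the Claim proved for $\mathcal S_t^{<}\binom{\omega,m}{\omega}$. For \textbf{A.5}, associativity of $\circ$ follows from the defining identity $(A\cdot B)(i) = B(A(i))$: computing $(A\circ(B\circ C))(i) = (B\cdot C)(A(i)) = C(B(A(i))) = ((A\circ B)\circ C)(i)$, and similarly for the mixed case with $X\in\mathcal R_k$; part (e), that $B = A\circ C$ implies $B\leq A$, is exactly how $\leq$ was defined on $\mathcal S_t\binom{\omega}{\omega}$. For \textbf{A.6}, parts (c) and (d) amount to the fact that an equivalence relation with $k$ classes disjoint from $t$ is recovered from the sequence of its initial segments $\pi(r_n(A))$, together with the nesting of supports; and part (b) is the observation that if $Y = A\circ X = \pi(B)$ for some $B\leq A$, then $\depth_B s(n,Y) < \depth_B s(n+1,Y)$ by construction of $s$ and $\pi$, hence the same holds for $\depth_A$ since $B\leq A$. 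For \textbf{A.7}, the operation $\circ = \cdot$ was already defined on $\mathcal S_t\binom{\beta}{\alpha}$ for all $\alpha\leq\beta\leq\omega$, so its finitization is automatic; parts (a) and (b) are easy extensions of functions, and part (c) is checked by the same device as before, namely defining $B(i) = (A\circ X)(i)$ when $A(i) < t+n$ and $B(i) = A(i)$ otherwise, so that $\pi(B) = A\circ X$ and $s(n,A\circ X) = r_n(A)\circ a$ for $a = r_n(X)$ of depth $n$. Metric closedness of $\mathcal S_t\binom{\omega}{k}$ is established exactly as the metric closedness of $\mathcal S_t\binom{\omega}{\omega}$: a $<$-increasing sequence $(a_n)_{n\geq k}$ of approximations determines a unique limiting equivalence relation with $k$ classes.

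The main obstacle here is not conceptual but bookkeeping: one must be careful that the definitions of $\pi$ and $s$ interact correctly with the two regimes $n\leq k$ and $n>k$ appearing in the definition of $s(n,X)$ --- in particular that $s(k,X) = r_k(A)$ genuinely belongs to $\mathcal{AR}_k$ (the approximation at which the $k$-th nontrivial class first appears) and that $s(n,X) = \pi(r_n(A))$ for $n>k$ does not depend on the choice of $A$ with $\pi(A) = X$, which follows since $\pi$ commutes with taking initial segments. Given all of this, the proof is a one-line appeal to Theorem \ref{main thm}, and since the hypotheses of that theorem are restricted to Baire-measurable colorings, the statement is correspondingly about finite Baire-measurable colorings --- one does not obtain the result for arbitrary finite colorings, which, as remarked, would in fact fail for $t>1$ under the Axiom of Choice.
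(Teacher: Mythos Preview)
Your proposal is correct and follows exactly the paper's approach: the paper simply asserts that ``with these definitions, the structure $(\mathcal S_t\binom{\omega}{\omega}, \leq, r, (\mathcal S_t\binom{\omega}{k})_k,\circ,s)$ satisfies axioms \textbf{A.1}--\textbf{A.7} and $\mathcal S_t\binom{\omega}{\omega}$ is metrically closed,'' and then deduces Theorems \ref{borel parameter words ramsey} and \ref{parameter words ramsey} directly from Theorem \ref{main thm}. You have in fact supplied more of the routine verification than the paper does; the only small slip is notational (writing ``$a = r_n(X)$'' where $X\in\mathcal R_k$, when $s(n,X)$ is meant), and your closing remark about failure for $t>1$ under AC imports a comment the paper makes only for the generalized ascending spaces, but neither affects the argument.
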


\begin{rem}
For $t=0$, Theorem \ref{borel parameter words ramsey}  is known as the Dual Ramsey Theorem. 
\end{rem}

\subsection{Ascending parameter words}\label{ascending}

Now, we will explore a special type of parameter words. Let $\mathcal S_t^{<}\binom{\beta}{\alpha}$ denote the set of all $A\in \mathcal S_t\binom{\beta}{\alpha}$ satisfying

\begin{enumerate}
\item $A^{-1}(\{t+j\})$ is finite, for  all $j<\alpha$.
\item $\max\ A^{-1}(\{t+i\})<\min\ A^{-1}(\{t+j\})$, for all $i<j<\alpha$.
\end{enumerate}

Let $\mathcal R = \mathcal S_t^{<}\binom{\omega}{\omega}$ and for every $k<\omega$ let $\mathcal R_k = \mathcal S_t^{<}\binom{\omega}{k}$. $\mathcal S_t^{<}\binom{\omega}{\omega}$ is a subset of $\mathcal S_t\binom{\omega}{\omega}$. So we can consider in this case the restictions $\leq$, $\circ$, $r$ and $s$, as defined in Section \ref{parameter}, to the corresponding domains within the context of $\mathcal S_t^{<}\binom{\omega}{\omega}$. But by letting $m=1$ in the definition of the  space $\mathcal S_t^{<}\binom{\omega,m}{\omega}$ introduced in Section \ref{general parameter}, we can easily verify that $\mathcal S_t^{<}\binom{\omega}{\omega}=\mathcal S_t^{<}\binom{\omega,1}{\omega}$. Notice that $\mathcal S_0^{<}\binom{\omega}{\omega}$ is essentially Ellentuck's space (see \cite{Ellen}) and, for all $k<\omega$, $\mathcal S_0^{<}\binom{\omega}{k}=\emptyset$. On the other hand, $\mathcal S_1^{<}\binom{\omega}{\omega}$ is Milliken's space (see \cite{Milliken}). 

\begin{thm}[Milliken, \cite{Milliken}]\label{Milliken space}
$(\mathcal S_1^{<}\binom{\omega}{\omega}, \leq, r)$ is a topological Ramsey space.
\end{thm}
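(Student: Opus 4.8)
The statement to prove is Theorem~\ref{Milliken space}: the triple $(\mathcal S_1^{<}\binom{\omega}{\omega}, \leq, r)$ is a topological Ramsey space. The plan is to invoke the Abstract Ellentuck Theorem (Theorem~\ref{thm.AET}), so it suffices to verify that this triple is closed as a subspace of $(\mathcal{AR})^{\bN}$ and satisfies axioms {\bf A.1}--{\bf A.4}. In fact, since we have already observed that $\mathcal S_1^{<}\binom{\omega}{\omega}=\mathcal S_1^{<}\binom{\omega,1}{\omega}$ (the case $m=1$ of the space constructed in Section~\ref{general parameter}), the cleanest route is simply to appeal to Theorem~\ref{general parameter space} (equivalently Theorem~\ref{FIN_m}) with $t=1$ and $m=1$: that theorem already asserts that $(\mathcal S_t^{<}\binom{\omega,m}{\omega},\leq,r)$ is a topological Ramsey space for all $t,m>0$, and Milliken's space is the instance $t=m=1$. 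I would state this as the one-line proof, noting that it recovers Milliken's original result \cite{Milliken}.

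If instead one wants a self-contained argument (not relying on the later machinery), I would proceed directly through the axioms. First, {\bf A.1} is immediate from the definition of $r$: $r_0(A)=\emptyset$, distinct members of $\mathcal S_1^{<}\binom{\omega}{\omega}$ differ on some $A^{-1}(\{1+n\})$ hence on some $r_n$, and the initial-segment coherence $r_n(A)=r_m(B)\Rightarrow n=m \wedge r_k(A)=r_k(B)$ holds because $r_n(A)$ records exactly the behaviour of $A$ below $\min A^{-1}(\{t+n\})$. For {\bf A.2} I would take $\le_{\fin}$ to be the natural finitization (as spelled out in the proof of Theorem~\ref{general parameter space}): $a\le_{\fin} b$ iff $\dom(a)=\dom(b)$ and each block of $a$ is a union of blocks of $b$ with the ascending-max condition respected; the finiteness of $\{a : a\le_{\fin} b\}$ is clear since $\dom(a)=\dom(b)$ pins down finitely many possibilities, $A\le B\Leftrightarrow(\forall n)(\exists m)\, r_n(A)\le_{\fin} r_m(B)$ is just unwinding definitions, and the amalgamation clause A.2(c) is a routine check about restricting a $\le_{\fin}$-witness to an initial segment. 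Axiom {\bf A.3} is the ``local nonemptiness'' of $[a,A]$ on appropriate $A$: given $a$ with $\depth_B(a)=n<\infty$, any $A$ agreeing with $B$ up to depth $n$ and coarsening below still has $a$ as an approximation, and A.3(b) follows by the same kind of surgery on $A$ below and above $\depth_B(a)$.

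The main obstacle, as always with topological Ramsey spaces, is {\bf A.4} --- the pigeonhole principle for one-step extensions. For Milliken's space this is precisely (the infinite form of) the Hales--Jewett / Graham--Rothschild theorem for finite colorings of one-dimensional ascending parameter words with parameter set of size $1$, i.e.\ essentially Hindman-type / Milliken--Taylor combinatorics. In the framework of Section~\ref{general parameter} this is exactly what Theorem~\ref{pigeon hole} (with $t=m=1$) supplies, and the deduction of {\bf A.4} from Theorem~\ref{pigeon hole} is the ``easy'' step flagged in the proof of Theorem~\ref{general parameter space}: given $\mathcal{O}\sse\mathcal{AR}_{|a|+1}$ and $a$ with $\depth_B(a)<\infty$, color $[B']$ (for $B'$ the tail of $B$ past $\depth_B(a)$) by which side of $\mathcal{O}$ the corresponding one-step extension of $a$ lands in, extract a monochromatic $B''\le B'$, and reassemble $A$ from the initial part of $B$ and $B''$. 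Finally, closedness of $\mathcal S_1^{<}\binom{\omega}{\omega}$ in $(\mathcal{AR})^{\bN}$ is a limit argument: an $\sqsubset$-increasing sequence $(a_n)$ in $\mathcal{AR}$ whose blocks cohere determines a unique surjection $A:1+\omega\to 1+\omega$ satisfying the ascending conditions, so the space is even compact-in-the-relevant-sense / metrically closed. Putting these together and citing Theorem~\ref{thm.AET} completes the proof; but given the structure of the paper I would simply record Theorem~\ref{Milliken space} as the special case $t=m=1$ of Theorem~\ref{general parameter space}.
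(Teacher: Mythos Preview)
Your proposal is correct and matches the paper's treatment. The paper does not give a standalone proof of Theorem~\ref{Milliken space}; it records it as Milliken's result, remarks that axiom {\bf A.4} for this space is equivalent to Hindman's theorem, and observes that setting $m=1$ in Theorem~\ref{general parameter space} recovers it (and more generally Carlson's Theorem~\ref{ascending space}) --- exactly the one-line argument you propose as primary, with your self-contained sketch filling in the same ingredients the paper alludes to.
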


Axiom {\bf A.4} for  $(\mathcal S_1^{<}\binom{\omega}{\omega}, \leq, r)$ is equivalent to Hindman's theorem \cite{Hindman}.  Again, the structure $(\mathcal S_t^{<}\binom{\omega}{\omega}, \leq, r, (\mathcal S_t^{<}\binom{\omega}{k})_k,\circ,s)$ satisfies axioms ${\bf A.1- A.7}$ and $\mathcal S_t^{<}\binom{\omega}{\omega}$ is metrically closed. Letting $m=1$ in Theorem \ref{block sequence ramsey} and Theorem \ref{general parameter space} we obtain a different proof of the following well-known results.

\begin{thm}
[Carlson, Theorem 6.9 in \cite{CarSim2}]\label{ascending space}$(\mathcal S_t^{<}\binom{\omega}{\omega}, \leq, r)$ is a topological Ramsey space.
\end{thm}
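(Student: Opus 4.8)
\textbf{Proof plan for Theorem \ref{ascending space}.}
The plan is to deduce this result directly from the general parameter-words machinery of Section \ref{general parameter} by specializing the parameter $m$ to $1$. First I would observe, as already noted in the paragraph preceding the statement, that letting $m=1$ in the definition of $\mathcal S_t^{<}\binom{\omega,m}{\omega}$ yields precisely the space $\mathcal S_t^{<}\binom{\omega}{\omega}$: when $m=1$ the set $m=\{0\}$ is a singleton, so a surjection $A\colon(t+\omega)\times 1\to t+\omega$ is the same thing as a surjection $A\colon t+\omega\to t+\omega$, condition (4) in the definition of $\mathcal S_t^{<}\binom{\beta,m}{\alpha}$ becomes vacuous, and conditions (1), (2), (3), (5), (6) translate exactly into the requirements (1)--(2) defining $\mathcal S_t\binom{\beta}{\alpha}$ together with the ascending conditions (1)--(2) defining $\mathcal S_t^{<}\binom{\beta}{\alpha}$. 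One must also check that the quasi-order $\leq$, the approximation map $r$, and the tetris/composition operations of Section \ref{general parameter} restrict, under the identification $m=1$, to the operations $\leq$, $r$ and $\cdot$ described in Sections \ref{parameter} and \ref{ascending}; this is routine since with $m=1$ the tetris operation $T$ is trivial and $[A]$ reduces to the collection of finite sums $a_{n_1}+\dots+a_{n_q}$, i.e.\ the unions of equivalence classes of $A$ that occur as a single class of some $B\le A$.

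Once the identification is established, the theorem follows immediately from Theorem \ref{general parameter space}: that theorem asserts that $(\mathcal S_t^{<}\binom{\omega,m}{\omega},\le,r)$ is a topological Ramsey space for every $t>0$ and $m>0$, and in particular for $m=1$. Strictly speaking Theorem \ref{general parameter space} was stated for $t>0$, so the case $t=0$ must be handled separately; but as remarked in the excerpt, $\mathcal S_0^{<}\binom{\omega}{\omega}$ is essentially Ellentuck's space, which is a topological Ramsey space by the classical Ellentuck theorem (or by Theorem \ref{thm.AET} applied to that space, as in \cite{Ellen}). So I would split into the cases $t=0$ and $t>0$, dispatching the former by appeal to Ellentuck and the latter by the $m=1$ specialization of Theorem \ref{general parameter space}.

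The only real content is therefore the verification that the $m=1$ instance of the general definitions genuinely coincides, on the nose, with the definitions of Section \ref{ascending}; this is what I expect to be the main (if modest) obstacle, since one has to match up the two descriptions of the approximation map $r$ and, more delicately, confirm that the quasi-orders agree --- that $[B]\subseteq[A]$ in the sense of Section \ref{general parameter} is equivalent to the existence of $C$ with $B=A\cdot C$ in the sense of Section \ref{ascending}, which uses condition (4)'s triviality when $m=1$ so that no "witnessing coordinate" constraint remains. I would also record that metric closedness of $\mathcal S_t^{<}\binom{\omega}{\omega}$ in $(\mathcal S_t^{<}\binom{<\omega}{<\omega})^{\mathbb N}$ is inherited from the $m=1$ case of the corresponding fact proved inside the proof of Theorem \ref{general parameter space}. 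With these identifications in hand, no further argument is needed.
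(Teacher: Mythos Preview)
Your proposal is correct and matches the paper's approach exactly: the paper derives Theorem \ref{ascending space} simply by observing that $\mathcal S_t^{<}\binom{\omega}{\omega}=\mathcal S_t^{<}\binom{\omega,1}{\omega}$ and then letting $m=1$ in Theorem \ref{general parameter space}, with the $t=0$ case handled by the remark that $\mathcal S_0^{<}\binom{\omega}{\omega}$ is Ellentuck's space. Your additional verification that the quasi-orders, approximation maps, and operations coincide under this identification is more detailed than what the paper spells out, but it is precisely the routine check the paper is implicitly invoking.
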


\begin{thm}[Pr\"omel-Voigt \cite{PromVoi}]\label{ascending words ramsey} For every  $Y\in\mathcal S_t^{<}\binom{\omega}{\omega}$ and every finite Baire-measurable coloring of $\mathcal S_t^{<}\binom{\omega}{k}$, there exists $A\in\mathcal S_t^{<}\binom{\omega}{\omega}|Y$ such that $\mathcal S_t^{<}\binom{\omega}{k}|A$ is monochromatic.
\end{thm}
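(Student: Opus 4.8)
The plan is to derive Theorem~\ref{ascending words ramsey} as a direct instance of Theorem~\ref{main thm}. Concretely, I would first verify that the structure $(\mathcal S_t^{<}\binom{\omega}{\omega}, \leq, r, (\mathcal S_t^{<}\binom{\omega}{k})_k, \circ, s)$ obtained by restricting the operations defined in Section~\ref{parameter} — or equivalently by setting $m=1$ in the space $\mathcal S_t^{<}\binom{\omega,m}{\omega}$ of Section~\ref{general parameter} — satisfies all of the axioms {\bf A.1}--{\bf A.7}. By Theorem~\ref{ascending space} (and the preceding discussion), {\bf A.1}--{\bf A.4} hold and $\mathcal S_t^{<}\binom{\omega}{\omega}$ is metrically closed in $(\mathcal{AR})^{\mathbb N}$, so it is a topological Ramsey space; this part I would simply cite. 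For axioms {\bf A.5}--{\bf A.7}, the verification is the same as the one carried out in the Claim following Theorem~\ref{block sequence ramsey}, specialized to $m=1$: associativity of the composition $A\circ B = A\cdot B$, the projection property $\depth_A s(n, A\circ X)<\depth_A s(n+1, A\circ X)$ coming from the definitions of $s$ and $\pi$, the injectivity clauses (c), (d) of {\bf A.6}, and the three clauses of {\bf A.7} describing how $\circ$ finitizes, with clause (c) verified by the explicit ``truncation'' construction of a witness $B\leq A$ with $\pi(B)=A\circ X$.

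Next I would check that each $\mathcal R_k = \mathcal S_t^{<}\binom{\omega}{k}$ is metrically closed in $(\mathcal{AR}_k)^{\mathbb N}$. Here one takes a sequence $(a_n)_{n\ge k}$ of elements of $\mathcal S_t^{<}\binom{<\omega}{k}$ that is $<$-increasing in the sense of the notation preceding {\bf A.7}, and observes that such a sequence naturally describes a single equivalence relation $X$ on $t+\omega$ with exactly $k$ classes disjoint from $t$: the coherence conditions force the ``partial partitions'' $a_n$ to be nested end-extensions, and because each $A^{-1}(\{t+j\})$ is required to be finite with the blocks occurring in increasing order, the union over $n$ is a total, well-defined function onto $t+k$. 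Thus $\bigcap_{n\ge k}\langle a_n\rangle = \{X\}$, giving metric closedness. (This is again the $m=1$ case of the analogous remark for $\mathcal S_t^{<}\binom{\omega,m}{\omega}$.)

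With all hypotheses of Theorem~\ref{main thm} verified, I would then apply it directly: given $Y\in\mathcal S_t^{<}\binom{\omega}{\omega}$, $k<\omega$, and a finite Baire-measurable coloring of $\mathcal R_k = \mathcal S_t^{<}\binom{\omega}{k}$, Theorem~\ref{main thm} yields $A\in\mathcal S_t^{<}\binom{\omega}{\omega}$ with $A\leq Y$ such that $\{A\circ X : X\in\mathcal R_k\} = \mathcal S_t^{<}\binom{\omega}{k}|A$ is monochromatic. Since $A\leq Y$ means exactly $A\in\mathcal S_t^{<}\binom{\omega}{\omega}|Y$, this is precisely the statement of Theorem~\ref{ascending words ramsey}.

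I expect the main obstacle to be purely bookkeeping rather than conceptual: namely, carefully matching the notation of Sections~\ref{parameter} and~\ref{ascending} (equivalence relations / surjections) against the notation of Section~\ref{general parameter} (the $m=1$ specialization) and against the abstract framework of Section~\ref{setting}, so that the axiom-by-axiom check is clean. In particular, the subtle points are clause (c) of {\bf A.7} — making sure the truncated function $B$ genuinely lies in $\mathcal S_t^{<}\binom{\omega}{\omega}$ and satisfies $s(n, A\circ X) = r_n(A)\circ a$ — and the metric-closedness argument for $\mathcal R_k$, where one must use the finiteness-and-increasing-order conditions defining $\mathcal S_t^{<}$ to see that the limit really has exactly $k$ classes. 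Everything else follows formally from Theorem~\ref{main thm} and the already-established fact (Theorem~\ref{ascending space}) that $\mathcal S_t^{<}\binom{\omega}{\omega}$ is a topological Ramsey space.
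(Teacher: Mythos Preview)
Your proposal is correct and follows essentially the same route as the paper: the paper observes that $(\mathcal S_t^{<}\binom{\omega}{\omega}, \leq, r, (\mathcal S_t^{<}\binom{\omega}{k})_k,\circ,s)$ satisfies {\bf A.1}--{\bf A.7} with $\mathcal S_t^{<}\binom{\omega}{\omega}$ metrically closed, and then obtains Theorem~\ref{ascending words ramsey} by setting $m=1$ in Theorem~\ref{block sequence ramsey} (which is itself the application of Theorem~\ref{main thm} to the generalized space). Your only addition is an explicit sketch of the metric closedness of $\mathcal R_k$, which the paper leaves implicit.
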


\subsection{Partial $\mathcal G$-partitions}
 
Let $\mathcal G$ be a finite group and let $e\in \mathcal G$ denote its unit element. Also let $\nu$ be a symbol not ocurring in $\mathcal G$. Given ordinals $\alpha\leq\beta\leq\omega$, let $\mathcal S_{\mathcal G}\binom{\beta}{\alpha}$ denote the set of all mappings $A: \beta \rightarrow \{\nu\}\cup(\alpha\times \mathcal G)$ satisfying

\begin{enumerate}
\item For every $j<\alpha$ there exists $i<\beta$ such that $A(i)=(j,e)$ and $A(i')\not\in\{j\}\times \mathcal G$ for all $i'<i$.
\item $\min\ A^{-1}(\{(i,e)\})< \min\ A^{-1}(\{(j,e)\})$ for all $i<j<\alpha$.
\end{enumerate}

Elements of $\mathcal S_{\mathcal G}\binom{\beta}{\alpha}$ are known as \textit{partial} $G$-\textit{partitions of} $\beta$ \textit{into} $\alpha$ \textit{blocks}.

\medskip

For $A\in\mathcal S_{\mathcal G}\binom{\gamma}{\beta}$  and $B\in\mathcal S_{\mathcal G}\binom{\beta}{\alpha}$, the composite $A\cdot B\in\mathcal S_{\mathcal G}\binom{\gamma}{\alpha}$ is defined by

\[(A\cdot B)(i) =\left\{ \begin{array}{rcl}
\nu \ \ \ \ \ & \mbox{if} & A(i)=\nu\\
\nu \ \ \ \ \ &  \mbox{if} &  A(i)=(j,b) \mbox{ and } B(j)=\nu\\
(k, b\cdot c) & \mbox{if} & A(i)=(j,b) \mbox{ and } B(j)=(k,c)\\
\end{array}
\right. \]

Also for $(n,A)\in\mathbb N\times\mathcal S_{\mathcal G}\binom{\omega}{\omega}$ let

\[r(n,A) =\left\{ \begin{array}{rcl}
\emptyset\ \ \ \ \ \ \ \ \ \ \ \ \ \ \ \ \ \ \ \  & \mbox{if} & n= 0\\
& & \\
A\upharpoonright \min\ A^{-1}(\{(n,e)\}) & \mbox{if} & n> 0\\
\end{array}
\right. \]

Define  $A\circ B = A\cdot B$, and as before write $A\leq B$ if there exists $C$ such that $A=B\circ C$. 

\begin{thm}[Pr\"omel-Voigt \cite{PromVoi}]\label{partial G space}
$(\mathcal S_{\mathcal G}\binom{\omega}{\omega}, \leq, r)$ is a topological Ramsey space.
\end{thm}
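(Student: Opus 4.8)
The strategy is to verify that $(\mathcal S_{\mathcal G}\binom{\omega}{\omega}, \leq, r)$ satisfies the hypotheses of the Abstract Ellentuck Theorem (Theorem \ref{thm.AET}): namely that it is closed as a subspace of $(\mathcal{AR})^{\bN}$ and satisfies \textbf{A.1}--\textbf{A.4}. The metrizable-closedness and axioms \textbf{A.1}, \textbf{A.2} are routine bookkeeping: one defines the finitization $\leq_{\fin}$ on finite partial $\mathcal G$-partitions in the evident way (analogous to the $\leq_{\fin}$ used for $\mathcal S_t^{<}\binom{\omega,m}{\omega}$, declaring $a\leq_{\fin}b$ when $a$ is obtained from a prefix of $b$ by a composition of the $\mathcal G$-partition type), checks that the approximation maps $r_n$ behave coherently, that $A\le B$ is equivalent to $(\forall n)(\exists m)\,r_n(A)\le_{\fin}r_m(B)$, and that an increasing-in-$\sqsubset$ sequence of finite approximations has a unique limit in $\mathcal S_{\mathcal G}\binom{\omega}{\omega}$; this last point gives closedness. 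Axiom \textbf{A.3} follows because $\depth_B(a)$ records exactly when enough of $B$ has been exposed to realize the finite configuration $a$, and composing $B$ below its depth-$n$ truncation with arbitrary $C$ leaves $[a,\cdot]$ nonempty and allows the required shrinking.

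\medskip

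The genuine content is \textbf{A.4}, the pigeonhole principle: given $a$ with $\depth_B(a)<\infty$ and a partition $\mathcal O\cup\mathcal O^c$ of $\mathcal{AR}_{|a|+1}$, we must find $A\in[\depth_B(a),B]$ with $r_{|a|+1}[a,A]$ monochromatic. The plan is to follow the same route used in Section \ref{general parameter} for $\mathcal S_t^{<}\binom{\omega,m}{\omega}$: reduce \textbf{A.4} to an infinitary Ramsey statement about single-step extensions of finite partial $\mathcal G$-partitions, and prove that statement by coding the ``one new block'' operation as a word problem and invoking the Infinite Hales--Jewett Theorem (Theorem \ref{infinite HJ}) --- here over the alphabet $L=\mathcal G$ (so the non-variable letters are the group elements $b\in\mathcal G$ encoding the ``twist'' $b\cdot(-)$ applied to a block), with the variable symbol $v$ playing the role of the unit $e$ and marking which coordinates belong to the newly created block. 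One fixes a rapidly increasing sequence of variable words indexed by the blocks $a_n=A^{-1}(\{(n,e)\}\cup\cdots)$ of $A$, pulls the coloring of finite extensions back to a coloring $c'$ of the combinatorial subspace generated by these words, applies Theorem \ref{infinite HJ} to get a monochromatic rapidly increasing sequence $(u_j(v))_j$, and reads off from it a $B\le A$ below which every single-block extension of $a$ gets the same color. A standard fusion/amalgamation over the (finitely many) relevant finite approximations $a$ of a fixed depth then upgrades this to \textbf{A.4} as stated.

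\medskip

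The main obstacle is purely notational rather than conceptual: correctly setting up the bijective dictionary between (i) elements $S^{l_1}(a_{n_1})+\cdots+S^{l_q}(a_{n_q})$-style ``sums of twisted blocks'' of a partial $\mathcal G$-partition and (ii) words over $\mathcal G\cup\{v\}$, while keeping track of the non-commutativity of $\mathcal G$ so that the composition $A\cdot B$ on partitions matches concatenation-with-substitution of words, and ensuring that the group-action bookkeeping in the definition of $(A\cdot B)(i)=(k,b\cdot c)$ is respected. The condition ``$v$ occurs at least once'' in a variable word translates exactly into the requirement that a single-block extension of a partial $\mathcal G$-partition genuinely creates a new block with its canonical representative $(j,e)$; once this correspondence is pinned down, the proof of the pigeonhole lemma (the analogue of Theorem \ref{pigeon hole}) is line-for-line the same as in Section \ref{general parameter}, and the deduction of \textbf{A.4}, hence of the theorem via Theorem \ref{thm.AET}, is immediate. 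One should also note, as in the case $t=0$ of $\mathcal S_t^{<}\binom{\omega,m}{\omega}$, that degenerate small cases (trivial group, or $\alpha=\beta=\omega$ with no blocks) are handled separately or seen to coincide with Ellentuck-type spaces.
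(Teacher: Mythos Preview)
The paper does not actually prove this theorem: it is stated with attribution to Pr\"omel--Voigt, and the only argument given is the Remark that \textbf{A.4} ``follows from case $k=1$ of Theorem D in \cite{PromVoi}.'' So your proposal is not competing with a proof in the paper but with an appeal to an external pigeonhole principle.

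Your plan to verify \textbf{A.1}--\textbf{A.3} and closedness is fine, and reducing everything to \textbf{A.4} is the right shape. The gap is in the claim that the pigeonhole for $\mathcal S_{\mathcal G}\binom{\omega}{\omega}$ can be obtained ``line-for-line'' from the Hales--Jewett argument of Section~\ref{general parameter}. That argument is tailored to the \emph{ascending} spaces $\mathcal S_t^{<}\binom{\omega,m}{\omega}$, where condition (6) forces the blocks not to interleave; this is exactly what lets one encode a one-block combination $S^{l_1}(a_{n_1})+\cdots+S^{l_q}(a_{n_q})$ as an ordinary variable word $w_{n_1}(l_1)\cdots w_{n_q}(l_q)$ and invoke Theorem~\ref{infinite HJ}. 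The space $\mathcal S_{\mathcal G}\binom{\omega}{\omega}$ has no ascending condition, and more importantly the defining constraint (1)---that the minimum position of each block carries the unit $e$---translates, under your proposed encoding, into the requirement that the \emph{first} non-$\nu$ letter of the word be $e$. That is a \emph{left-variable} word condition, not the ordinary variable-word condition in Theorem~\ref{infinite HJ}. The Infinite Hales--Jewett Theorem as stated produces a monochromatic sequence $(u_j(v))_j$ of variable words, but nothing guarantees these are left-variable, so the resulting $B$ need not lie in $\mathcal S_{\mathcal G}\binom{\omega}{\omega}$ at all. This is precisely the gap between Hales--Jewett and Carlson--Simpson; compare Section~\ref{parameter}, where for the non-ascending $\mathcal S_t\binom{\omega}{\omega}$ the paper likewise does not use Theorem~\ref{infinite HJ} but cites the Graham--Rothschild/Carlson--Simpson pigeonhole (Theorem~A, $k=1$, of \cite{PromVoi}). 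For $\mathcal S_{\mathcal G}\binom{\omega}{\omega}$ one analogously needs Theorem~D of \cite{PromVoi} (the $\mathcal G$-equivariant version), which is exactly what the paper invokes.

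A smaller point: even granting the strategy, your alphabet $L=\mathcal G$ omits the symbol $\nu$. In forming $B\le A$, each block of $A$ may be discarded (sent to $\nu$) rather than merged with a twist, so the correct letter set is $\{\nu\}\cup(\mathcal G\setminus\{e\})$ with the variable playing the role of $e$.
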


\begin{rem}
 {\bf A.4} for the space $(\mathcal S_{\mathcal G}\binom{\omega}{\omega}, \leq, r)$ follows from case $k=1$ of Therem D in \cite{PromVoi}.
\end{rem}

Given $i<\omega$ and $A\in\mathcal S_{\mathcal G}\binom{\omega}{\omega}$, if $A(i)=(n,b)$ then we will write $A(i)_0=n$. That is, $A(i)_0$ is the first coordinate of $A(i)$. Now, let $\pi: S_t\binom{\omega}{\omega}\rightarrow S_t\binom{\omega}{k}$ be defined as follows: 

\[\pi(A)(i) =\left\{ \begin{array}{rcl}
(0,e)\ \ \  & \mbox{if} & A(i)_0\geq t+k\\
& & \\
A(i)\ \ \  & \mbox{if} & 0\leq A(i)_0< t+k\\
\end{array}
\right. \]

Notice that $\pi$ is a surjection. Again, for $l>k$, we will extend the function $\pi$ to $\mathcal{AR}_l$ as follows. For $a\in\mathcal{AR}_l$ and $i$ in the domain of $a$, if $a(i)=(n,b)$ then write $a(i)_0=n$ and let

\[\pi(a)(i) =\left\{ \begin{array}{rcl}
(0,e)\ \ \  & \mbox{if} & t+k\leq a(i)_0< t+l\\
& & \\
a(i)\ \ \ \ & \mbox{if} & 0\leq a(i)_0< t+k\\
\end{array}
\right. \]

As in the previous Section, define $s : \mathbb N\times\bigcup_k\mathcal S_{\mathcal G}\binom{\omega}{k} \rightarrow\bigcup_{i\leq k}\mathcal{AR}_i$ as follows. Given $n<\omega$, $X\in\mathcal S_{\mathcal G}\binom{\omega}{k}$ and any $A\in\mathcal S_{\mathcal G}\binom{\omega}{\omega}$ such that $\pi(A)=X$, let 

\[s(n,X) =\left\{ \begin{array}{rcl}
r_n(A)\ \ \ \ \ \ \  & \mbox{if} & 0\leq n\leq k\\
\pi(r_n(A)) \ \ \ \ & \mbox{if} & n>k\\
\end{array}
\right. \]

With these definitions the following hold:

\begin{thm}[Pr\"omel-Voigt, \cite{PromVoi}]\label{ascending words ramsey} For every  $Y\in\mathcal S_{\mathcal G}\binom{\omega}{\omega}$ and every finite Baire-measurable coloring of $\mathcal S_{\mathcal G}\binom{\omega}{k}$, there exists $X\in\mathcal S_{\mathcal G}\binom{\omega}{\omega}|Y$ such that $\mathcal S_{\mathcal G}\binom{\omega}{k}|X$ is monochromatic.
\end{thm}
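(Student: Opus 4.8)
The plan is to cast the space $\mathcal S_{\mathcal G}\binom{\omega}{\omega}$ of partial $\mathcal G$-partitions into the abstract framework of Section~\ref{setting} and then invoke Theorem~\ref{main thm}. Since Theorem~\ref{partial G space} already records that $(\mathcal S_{\mathcal G}\binom{\omega}{\omega},\le,r)$ is a topological Ramsey space, axioms \textbf{A.1}--\textbf{A.4} are in hand, and it remains to verify \textbf{A.5}--\textbf{A.7} for the augmented structure $(\mathcal S_{\mathcal G}\binom{\omega}{\omega},\le,r,(\mathcal S_{\mathcal G}\binom{\omega}{k})_k,\circ,s)$, together with metric closedness of the ambient space and of each projected space $\mathcal S_{\mathcal G}\binom{\omega}{k}$ inside $(\mathcal{AR}_k)^{\mathbb N}$. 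This mirrors exactly the pattern used for generalized ascending parameter words in Section~\ref{general parameter} (the Claim following Theorem~\ref{block sequence ramsey}) and for ordinary parameter words in Section~\ref{parameter}, so the work is largely a transcription with the group-theoretic bookkeeping carried along.

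First I would check \textbf{A.5}: associativity of $\circ=\cdot$ on triples from $\mathcal S_{\mathcal G}\binom{\omega}{\omega}$ and on $\mathcal S_{\mathcal G}\binom{\omega}{\omega}\times\mathcal S_{\mathcal G}\binom{\omega}{\omega}\times\mathcal S_{\mathcal G}\binom{\omega}{k}$ follows from associativity of the defining formula $(A\cdot B)(i)=(k,b\cdot c)$ together with associativity of the group multiplication in $\mathcal G$ --- this is where the ``$b\cdot c$'' clause matters and is the only place the group structure intervenes nontrivially. Part (a), $A\cdot B\in\mathcal S_{\mathcal G}\binom{\omega}{\omega}$ when $B,C$ range over the full space, and part (b), $A\cdot X\in\mathcal S_{\mathcal G}\binom{\omega}{k}$ for $X\in\mathcal S_{\mathcal G}\binom{\omega}{k}$, are immediate from the definition of composition preserving the two closure conditions defining $\mathcal S_{\mathcal G}$; part (e) is precisely the definition of $\le$ here. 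Then \textbf{A.6} is verified by noting, as in the parameter-word case, that $s(n,X)$ is well-defined independently of the choice of $A$ with $\pi(A)=X$ (because $\pi(r_n(A))$ depends only on $X$ for $n>k$, and $r_n(A)=s(n,X)$ for $n\le k$ is forced by the condition that $A\rst t$ portion and first $k$ blocks of $X$ coincide), that $\depth_A s(n,A\circ X)$ is strictly increasing in $n$ (using that $A\circ X=\pi(B)$ for some $B\le A$ with $B^{-1}(\{(n+1,e)\})$ occurring strictly after $B^{-1}(\{(n,e)\})$), and that (c), (d) are the obvious injectivity/separation statements for the sequence of $s$-approximations.

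Next I would handle \textbf{A.7}: the operation $\cdot$ on $\mathcal S_{\mathcal G}\binom{\beta}{\alpha}$ was defined for all $\alpha\le\beta\le\omega$, so its restriction to finite approximations $\mathcal{AR}\binom{n}{m}|A\times\mathcal{AR}\binom{m}{k}|A\to\mathcal{AR}\binom{n}{k}|A$ is automatic, and part (a) (extending a composition relation $b\circ a<c$ to a longer $b'$ with $b'\circ a=c$) and part (b) (monotonicity $b<c\Rightarrow b\circ a<c\circ a$) follow from straightforward extension-of-functions arguments, since composing with a fixed $a$ is determined coordinatewise. For part (c), given $A$, $a\in\mathcal{AR}\binom{n}{k}|A$, and $X\in\langle a\rangle$, I would exhibit the witness $B\in\mathcal S_{\mathcal G}\binom{\omega}{\omega}$ defined by $B(i)=(A\circ X)(i)$ when $A(i)_0<t+n$ and $B(i)=A(i)$ otherwise (exactly the construction used in the block-sequence Claim), check $B\le A$ and $\pi(B)=A\circ X$, and then compute $s(n,A\circ X)=r_k(B)=B\rst\min A^{-1}(\{(n,e)\})=r_n(A)\circ a$ (with the analogous identity $s(k,A\circ X)=a$ in the edge case $n=k$). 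Metric closedness of $\mathcal S_{\mathcal G}\binom{\omega}{\omega}$ follows because a $\sqsubseteq$-increasing sequence of approximations determines a unique function $\omega\to\{\nu\}\cup(\omega\times\mathcal G)$ satisfying the two defining conditions; closedness of $\mathcal S_{\mathcal G}\binom{\omega}{k}$ in $(\mathcal{AR}_k)^{\mathbb N}$ follows likewise, since a $<$-increasing sequence $(a_n)_{n\ge k}$ of $k$-block approximations has a well-defined limit that lies in $\mathcal S_{\mathcal G}\binom{\omega}{k}$.

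With \textbf{A.1}--\textbf{A.7} and the closedness hypotheses in place, Theorem~\ref{main thm} applies verbatim: for every $Y\in\mathcal S_{\mathcal G}\binom{\omega}{\omega}$ and every finite Baire-measurable coloring of $\mathcal S_{\mathcal G}\binom{\omega}{k}$ there is $A\le Y$ with $\{A\circ X:X\in\mathcal S_{\mathcal G}\binom{\omega}{k}\}=\mathcal S_{\mathcal G}\binom{\omega}{k}|A$ monochromatic, which is the assertion (after relabeling $A$ as $X$ and $Y$ as $Y$). I do not anticipate a genuine obstacle; the only slightly delicate points are (i) confirming that the non-commutativity of $\mathcal G$ causes no trouble --- associativity is all that is used, and it holds --- and (ii) the well-definedness of $s$, i.e.\ that the clause $\pi(r_n(A))$ for $n>k$ really depends only on $X=\pi(A)$ and not on the particular preimage $A$; both are routine but should be stated explicitly.
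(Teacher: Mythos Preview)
Your proposal is correct and follows essentially the same approach as the paper: the paper sets up $\circ$, $\pi$, and $s$ for the partial $\mathcal G$-partition space and then states the theorem as a direct consequence (``With these definitions the following hold''), relying implicitly on Theorem~\ref{main thm} once \textbf{A.1}--\textbf{A.7} and metric closedness are verified, exactly as you outline. Your write-up simply makes explicit the verification steps (associativity via the group law, well-definedness of $s$, the witness $B$ for \textbf{A.7}(c)) that the paper leaves to the reader by analogy with Sections~\ref{general parameter} and~\ref{parameter}; one small slip is the stray parameter $t$ in your description (and in the paper's $\pi$), which does not appear in the definition of $\mathcal S_{\mathcal G}\binom{\beta}{\alpha}$ and should be dropped.
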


\subsection{Infinite dimensional vector subspaces of $\mathbb F^{\mathbb N}$}

Given a finite field $\mathbb F$, let $\mathcal M_{\infty} = \mathcal M_{\infty}(\mathbb F)$ denote the set of all reduced echelon $\mathbb N\times\mathbb N$-matrices, $A: \mathbb N\times\mathbb N\rightarrow\mathbb F$. The $i$th column of $A$ is the function $A_n: \mathbb N\rightarrow \mathbb F$ given by $A_n(j)= A(n,j)$. For $A,B\in\mathcal M_{\infty}$, $A\leq B$ means that every column of $A$ belongs to the closure (taken in  $F^{\mathbb N}$ with the product topology) of the linear span of the columns of $B$. For $n\in\mathbb N$ and $A\in\mathcal M_{\infty}$ let $p_n(A)=\min\ \{j : A_n(j)\neq 0\}$ and define $r(0,A) = \emptyset$ and $r(n+1,A) = A\upharpoonright (n\times p_n(A))$.

\begin{thm}[Carlson \cite{Carlson}]\label{vector space} $(\mathcal M_{\infty}, \leq, r)$ is a topological Ramsey space.
\end{thm}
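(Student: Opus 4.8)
The plan is to verify that $(\mathcal M_\infty, \le, r)$ satisfies axioms \textbf{A.1}--\textbf{A.4} together with closedness as a subspace of $(\mathcal{AR})^{\mathbb N}$, and then invoke the Abstract Ellentuck Theorem (Theorem \ref{thm.AET}). First I would pin down the combinatorics of $\mathcal{AR}$: a finite approximation $r_{n+1}(A) = A\upharpoonright (n\times p_n(A))$ records the first $n$ columns of a reduced echelon matrix truncated at row height $p_n(A)$, where $p_n(A)$ is the position of the leading (pivot) entry of column $n$. I would describe $\mathcal{AR}_n$ intrinsically as the set of finite reduced-echelon arrays with $n$ completed columns, and define the finitary quasi-order $\le_{\fin}$ on $\mathcal{AR}$ by: $a \le_{\fin} b$ iff every column recorded in $a$ lies in the linear span of the columns recorded in $b$ (and $a$ is a genuine initial-segment-style truncation). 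This is the finitization of the ``closure of the linear span'' relation $\le$.

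The verification of \textbf{A.1} is routine: $r_0(A) = \emptyset$; distinct reduced echelon matrices differ in some column hence in some $r_n$; and because reduced echelon form is canonical, $r_n(A) = r_m(B)$ forces the pivot data and all lower-indexed columns to agree, giving $n = m$ and $r_k(A) = r_k(B)$ for $k < n$. For \textbf{A.2}, finiteness of $\{a : a \le_{\fin} b\}$ holds because there are only finitely many reduced echelon arrays spanned inside a fixed finite-dimensional space over the finite field $\mathbb F$; the equivalence $A \le B \iff (\forall n)(\exists m)\, r_n(A) \le_{\fin} r_m(B)$ is essentially the statement that membership in the closure of a linear span is witnessed by finite truncations, which uses that $\mathbb F^{\mathbb N}$ carries the product topology and each column of $A$ has only finitely many nonzero entries below any fixed height only after we account for the echelon structure — here I should be careful, since columns of $A$ need not have finite support, so the witnessing $m$ must be chosen large enough that the relevant leading coordinates are captured; I would phrase \textbf{A.2}(b) via: a vector $v$ lies in the closure of $\mathrm{span}(B)$ iff for every height $h$, $v\upharpoonright h$ lies in $\mathrm{span}(B\upharpoonright h)$. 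Axiom \textbf{A.2}(c) (amalgamation of a truncation $a \sqsubset b$ along $b \le_{\fin} c$ to some $d \sqsubset c$) follows by taking $d$ to be the truncation of $c$ whose columns span the same subspace as those of $a$; the pivot monotonicity in echelon form makes this $d$ an honest initial piece of $c$.

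For \textbf{A.3}, given $\depth_B(a) < \infty$ and $A \in [\depth_B(a), B]$, I must produce $B' \in [a, A]$ (respectively shrink $[a,A]$ inside $[\depth_B(a),B]$): this amounts to extending the finite echelon array $a$ to an infinite reduced echelon matrix all of whose columns lie in the closure of $\mathrm{span}(A)$, which is possible precisely because $a$ is spanned by an initial segment of $A$'s columns and there are infinitely many further independent columns of $A$ to continue with. The real work, and the step I expect to be the main obstacle, is \textbf{A.4}: given $\mathcal O \subseteq \mathcal{AR}_{|a|+1}$, find $A \in [\depth_B(a), B]$ with $r_{|a|+1}[a,A]$ entirely inside $\mathcal O$ or entirely inside $\mathcal O^c$. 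This is the genuine pigeonhole principle for the vector-space setting, and it is exactly the content of the Graham--Leeb--Rothschild theorem \cite{gralero}: a finite coloring of the one-dimensional extensions of a fixed configuration (equivalently, of the ``next column'' choices) admits a monochromatic infinite subspace. I would state \textbf{A.4} as a direct consequence of (the infinite version of) that theorem, applied inside the subspace $B$ above the depth of $a$, translating ``$r_{|a|+1}$ of members of $[a,A]$'' into ``the new pivot column together with its reduced coordinates over $a$'', which is precisely a Graham--Leeb--Rothschild-type parameter. Finally, closedness of $\mathcal M_\infty$ in $(\mathcal{AR})^{\mathbb N}$: a coherent sequence $(a_n)$ with $a_n \sqsubset a_{n+1}$ determines a unique infinite array which is automatically in reduced echelon form (each finite piece already is), so the limit lies in $\mathcal M_\infty$. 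With \textbf{A.1}--\textbf{A.4} and closedness established, Theorem \ref{thm.AET} yields that $(\mathcal M_\infty, \le, r)$ is a topological Ramsey space.
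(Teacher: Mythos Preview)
The paper does not supply its own proof of this theorem; it is stated with attribution to Carlson \cite{Carlson} and used as a known result. Your plan---verify \textbf{A.1}--\textbf{A.4} and metric closedness, then invoke Theorem~\ref{thm.AET}---is exactly the standard route (and is how the result is developed in \cite{Todo}), so there is nothing in the present paper to compare against beyond noting that your approach is the expected one.

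One point to tighten: for \textbf{A.4} you cite the Graham--Leeb--Rothschild theorem \cite{gralero}, but that is a \emph{finitary} statement. Axiom \textbf{A.4} demands an \emph{infinite}-dimensional $A\le B$ on which all one-step extensions of $a$ are monochromatic, and this does not follow from the finite GLR by any routine compactness argument. You hedge with ``(the infinite version of) that theorem'', but that infinite version is precisely the main theorem of \cite{Carlson}---Carlson's infinitary GLR---which is the result being cited for the theorem statement itself. So the logical structure is: Carlson's infinitary pigeonhole (or, for the one-column extension case, a consequence of the infinite Hales--Jewett theorem as in \cite{Todo}) yields \textbf{A.4}, and then Theorem~\ref{thm.AET} gives the topological Ramsey space. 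Your sketch is correct once the citation for \textbf{A.4} is redirected from \cite{gralero} to \cite{Carlson} or \cite{Todo}.
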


Now, for every postive $k\in\mathbb N$, let $\mathcal M_k = \mathcal M_k(\mathbb F)$ denote the set of all reduced echelon $\mathbb N\times k$-matrices, $A: \mathbb N\times k\rightarrow\mathbb F$. For $A\in\mathcal M_{\infty}$ and $B\in \mathcal M_{\infty}\cup\mathcal M_k$, let $A\circ B$ be the usual multiplication of matrices. Define 

\[s(n,A) =\left\{ \begin{array}{rcl}
r(n,A) \ \ \ \ \ \ \ \ \ \ \ & \mbox{if} & n\leq k\\
& & \\
 A\upharpoonright (n\times p_k(A)) & \mbox{if} & n>k\\
\end{array}
\right. \]
 
With these definitions $(\mathcal M_{\infty}, \leq, r, (\mathcal M_k)_k,\circ,s)$ satisfies {\bf A.1}--{\bf A.7}, so we get the following:

\begin{thm}[Todorcevic, \cite{Todo}]\label{matrix ramsey}
For every  $B\in\mathcal M_{\infty}$ and every finite Baire-measurable coloring of $\mathcal M_k$, there exists $A\in\mathcal M_{\infty}|B$ such that $\mathcal M_k|A$ is monochromatic.
\end{thm}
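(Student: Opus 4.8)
The plan is to check that the structure $(\mathcal M_\infty,\le,r,(\mathcal M_k)_{k<\omega},\circ,s)$ satisfies all the hypotheses of Theorem \ref{main thm} and then to quote that theorem. Since $\mathcal M_k|A=\{A\circ X:X\in\mathcal M_k\}$ and $\mathcal M_\infty|B=\{A:A\le B\}$ by the definitions of the notations $\mathcal R_k|A$ and $\mathcal R|B$, the conclusion of Theorem \ref{main thm} is verbatim the assertion of Theorem \ref{matrix ramsey}, so there is nothing to do beyond verifying the axioms and the closedness hypotheses.

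Most of the work has already been done by Carlson. By Theorem \ref{vector space}, $(\mathcal M_\infty,\le,r)$ is a topological Ramsey space, so axioms \textbf{A.1}--\textbf{A.4} hold and $\mathcal M_\infty$ is closed, hence metrically closed, in $(\mathcal{AR})^{\mathbb N}$. For the metric closedness of $\mathcal M_k$ in $(\mathcal{AR}_k)^{\mathbb N}$ one checks that a $<$-increasing sequence $(a_n)_n$ of finite reduced echelon matrices has a unique limit, namely the $\mathbb N\times k$ reduced echelon matrix end-extending every $a_n$: the pivot positions are forced to agree on common initial segments, which determines the limit uniquely. It then remains to verify \textbf{A.5}--\textbf{A.7}.

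For \textbf{A.5}, $\circ$ is matrix multiplication, so (c) and (d) are the associative law (each column of the right-hand factor has finite support, so every product involved is a finite linear combination of columns and is well defined), (a) and (b) record that the product of reduced echelon matrices of the appropriate shapes is again reduced echelon of the appropriate shape, and (e) holds because every column of $A\circ C$ is a finite linear combination of columns of $A$ and hence lies in the closure of their linear span, i.e.\ $A\circ C\le A$. For \textbf{A.6}, one checks that $s$ is coherent under truncation and that distinct elements of $\mathcal M_k$ are separated by their $s$-approximations (parts (a), (c), (d)); the monotonicity in part (b), $\depth_A s(n,A\circ X)<\depth_A s(n+1,A\circ X)$, follows because passing to $A\circ X$ only relabels and combines pivot columns while the function $p_k$ records a genuinely increasing sequence of row indices. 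For \textbf{A.7}, the finitization of $\circ$ is ordinary multiplication of the truncated matrices; parts (a) and (b) are obtained by prolonging a finite reduced echelon factor to a longer one, and part (c) is the computation that $s(n,A\circ X)$ equals $r_n(A)$ multiplied by the truncation $a=s(n,X)$ when $n>k$ (and equals $a$ itself when $n=k$), which amounts to the observation that matrix multiplication commutes with passing to suitable top-left blocks.

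I expect the main obstacle to be the bookkeeping in \textbf{A.7}(c) together with \textbf{A.6}(b): one must pin down exactly how far down the rows the matrix $A\circ X$ is already determined by the truncated data $r_n(A)$ and $a$, and simultaneously confirm that reduced echelon form is preserved by $\circ$ so that $r_n(A)\circ a$ genuinely lies in $\mathcal{AR}_k|A$ with strictly increasing depth. Once these compatibility facts relating $\circ$, $r$, $s$ and the pivot functions $p_n$, $p_k$ are nailed down, Theorem \ref{main thm} applies and yields, for each $B\in\mathcal M_\infty$, each $k<\omega$, and each finite Baire-measurable coloring of $\mathcal M_k$, an $A\le B$ with $\mathcal M_k|A=\{A\circ X:X\in\mathcal M_k\}$ monochromatic, which is exactly Theorem \ref{matrix ramsey}.
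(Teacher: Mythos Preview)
Your proposal is correct and follows exactly the paper's approach: the paper simply asserts that $(\mathcal M_{\infty},\le,r,(\mathcal M_k)_k,\circ,s)$ satisfies \textbf{A.1}--\textbf{A.7} and then states Theorem \ref{matrix ramsey} as an immediate consequence of Theorem \ref{main thm}. Your outline in fact supplies more detail on the verification of \textbf{A.5}--\textbf{A.7} than the paper does.
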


Now, let $\mathcal V_{\infty}(\mathbb F)$ denote the collection of all closed infinite-dimensional subspaces of $\mathbb F^{\mathbb N}$ and for every $k$, let $\mathcal V_k(\mathbb F)$ denote the collection of all $k$-dimensional subspaces of $\mathbb F^{\mathbb N}$. If we consider $\mathcal V_{\infty}(\mathbb F)$ and $\mathcal V_k(\mathbb F)$ with the Vietoris topology then there are natural homeomorphisms between $\mathcal V_{\infty}(\mathbb F)$ and $\mathcal M_{\infty}$, and $\mathcal V_k(\mathbb F)$ and $\mathcal M_k$. So Theorem \ref{matrix ramsey} can be restated as

\begin{cor}[Voigt \cite{Voigt}]\label{vector ramsey} For every  $W\in\mathcal V_{\infty}(\mathbb F)$ and every finite Baire-measurable coloring of $\mathcal V_k(\mathbb F)$, there exists $V\in\mathcal V_{\infty}(\mathbb F)|W$ such that $\mathcal V_k(\mathbb F)|V$ is monochromatic.
\end{cor}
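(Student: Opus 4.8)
The plan is to deduce Corollary \ref{vector ramsey} directly from Theorem \ref{matrix ramsey} by transporting the statement along the natural correspondence between matrices in reduced echelon form and the subspaces they generate. Concretely, I would define $\Phi\colon\mathcal M_\infty\to\mathcal V_\infty(\mathbb F)$ by sending a reduced echelon $\mathbb N\times\mathbb N$-matrix $A$ to the closure in $\mathbb F^{\mathbb N}$ of the linear span of its columns, and similarly $\Phi_k\colon\mathcal M_k\to\mathcal V_k(\mathbb F)$ sending a reduced echelon $\mathbb N\times k$-matrix to the span of its columns. The first step is to observe that each of $\Phi$ and $\Phi_k$ is a bijection: every closed infinite-dimensional (resp.\ $k$-dimensional) subspace of $\mathbb F^{\mathbb N}$ has exactly one generating family in reduced echelon form, so $\Phi$ and $\Phi_k$ invert the operation ``take the reduced echelon matrix of a basis.''

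The second step is to check that these bijections are homeomorphisms when $\mathcal M_\infty,\mathcal M_k$ carry their metric topology (inherited from $\mathbb F^{\mathbb N\times\mathbb N}$ and $\mathbb F^{\mathbb N\times k}$) and $\mathcal V_\infty(\mathbb F),\mathcal V_k(\mathbb F)$ carry the Vietoris topology. Since $\mathbb F$ is finite, $\mathbb F^{\mathbb N}$ is compact metrizable, so its hyperspace of closed sets with the Vietoris topology is compact metrizable; the map $A\mapsto\Phi(A)$ is continuous because the columns depend coordinatewise (hence in the product topology) on the entries of $A$ and closing off the span is continuous into the hyperspace. Continuity of the inverse amounts to the fact that along a Vietoris-convergent sequence of subspaces the pivot positions eventually stabilize, after which the reduced echelon entries converge coordinatewise. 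The third step is routine but must be recorded: $\Phi$ is an order isomorphism from $(\mathcal M_\infty,\le)$ onto $(\mathcal V_\infty(\mathbb F),\subseteq)$, since $A\le B$ was defined to mean precisely that every column of $A$ lies in the closure of the span of the columns of $B$, i.e.\ $\Phi(A)\subseteq\Phi(B)$; consequently $\Phi$ carries $\mathcal M_\infty|B$ onto $\mathcal V_\infty(\mathbb F)|\Phi(B)$. Likewise $\Phi$ and $\Phi_k$ identify $\mathcal M_k|A=\{A\circ C:C\in\mathcal M_k\}$ with $\mathcal V_k(\mathbb F)|\Phi(A)$, the family of $k$-dimensional subspaces of $\Phi(A)$: the columns of the product $A\circ C$ are $\mathbb F$-linear combinations of the columns of $A$, so the column space of $A\circ C$ is contained in $\Phi(A)$, and conversely every $k$-dimensional subspace of $\Phi(A)$ is the column space of $A\circ C$ for a suitable $C\in\mathcal M_k$.

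With the dictionary in place the conclusion follows immediately. Given $W\in\mathcal V_\infty(\mathbb F)$ and a finite Baire-measurable coloring $\gamma$ of $\mathcal V_k(\mathbb F)$, let $B=\Phi^{-1}(W)\in\mathcal M_\infty$ and let $c=\gamma\circ\Phi_k$, which is a finite Baire-measurable coloring of $\mathcal M_k$ because $\Phi_k$ is a homeomorphism. By Theorem \ref{matrix ramsey} there is $A\in\mathcal M_\infty|B$ with $\mathcal M_k|A$ monochromatic for $c$; then $V:=\Phi(A)\in\mathcal V_\infty(\mathbb F)|W$ and $\mathcal V_k(\mathbb F)|V=\Phi_k[\mathcal M_k|A]$ is monochromatic for $\gamma$. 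The main obstacle is the homeomorphism claim in the second step, and in particular the continuity of the inverse $\Phi^{-1}$: this requires the short argument that a Vietoris-convergent sequence of infinite-dimensional subspaces has eventually constant pivot columns, so that the associated reduced echelon matrices converge entrywise. Everything else is bookkeeping that follows directly from the definitions of $\le$, $\circ$, $r$ and $s$ given in this subsection.
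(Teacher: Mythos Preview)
Your proposal is correct and follows essentially the same approach as the paper: the paper simply asserts that there are natural homeomorphisms between $\mathcal V_\infty(\mathbb F)$ and $\mathcal M_\infty$, and between $\mathcal V_k(\mathbb F)$ and $\mathcal M_k$ (with the Vietoris topology on the subspace side), and declares that Corollary~\ref{vector ramsey} is then just a restatement of Theorem~\ref{matrix ramsey}. You have spelled out the dictionary in more detail than the paper does, including the order-isomorphism and the identification of $\mathcal M_k|A$ with $\mathcal V_k(\mathbb F)|\Phi(A)$, but the underlying idea is identical.
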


\section{Final remark}

Following with the tradition started in \cite{CarSim,Todo}, the results contained in Section \ref{setting} attempt to serve as part of a unifying framework for the theory of topological Ramsey spaces. The new axioms {\bf A5}--{\bf A7}  are sufficent to capture a particular feature of a class of topological Ramsey spaces which was not revealed by the original 4 axioms proposed in \cite{Todo}. In Section \ref{setting} , we showed a characterization of those topological Ramsey spaces $\mathcal R$ with family of approximations $\mathcal{AR}= \bigcup_{k<\omega}\mathcal{AR}_k$ for which there exist topological spaces $\mathcal R_k\subseteq(\mathcal{AR}_k)^{\mathbb N}$, $k<\omega$, such that every Baire subset of $\mathcal R_k$ is Ramsey. While this characterization is essentially based on the axioms {\bf A5}--{\bf A7}, the question about the necessity of these axioms remains open. 

Finally, given $0<m<\omega$, there is a well-known understanding of the space $\FIN_m^{[\infty]}=\mathcal S_1^{<}\binom{\omega,m}{\omega}$ in terms of Functional Analysis. It has to do with the property of oscilation stabilty for Lipschitz functions defined on the sphere of the Banach space $c_0$ (see \cite{gowers}) and with the solution of the Distortion Problem (see \cite{odelslum}). This leads to the following natural question: For $t>1$, what is the interpretation (if any) of the space $\mathcal S_t^{<}\binom{\omega,m}{\omega}$ in terms Functional Analysis?

\medskip

\section*{Acknowledgement} Jos\'e Mijares would like to thank Jes\'us Nieto for valuable conversations about  the proof of Theorem \ref{pigeon hole} in Section \ref{general parameter}.

\end{document}